\newcommand{\mcN}{\mathcal{N}}
\newcommand{\mcS}{\mathcal{S}}
\newcommand{\mcC}{\mathcal{C}}
\newcommand{\mcB}{\mathcal{B}}
\newcommand{\mcJ}{\mathcal{J}}
\newcommand{\mcA}{\mathcal{A}}
\newcommand{\mcX}{\mathcal{X}}
\newcommand{\mcY}{\mathcal{Y}}
\newcommand{\mcK}{\mathcal{K}}
\newcommand{\mbx}{\underline{x}}
\newcommand{\mbe}{\underline{e}}
\newcommand{\mcH}{\mathcal{H}}
\newcommand{\mcD}{\mathcal{D}}
\newcommand{\LL}{\mathcal{L}}
\def\cL{\LL}
\newcommand{\HH}{\mathcal{H}}
\def\f{\frac}
\newcommand{\C}{\mathbb{C}}
\newcommand{\N}{\mathbb{N}}
\newcommand{\R}{\mathbb{R}}
\newcommand{\Z}{\mathbb{Z}}
\newcommand{\al}{\alpha}
\newcommand{\de}{\delta}
\newcommand{\fy}{\varphi}
\newcommand{\om}{\omega}
\newcommand{\la}{\lambda}
\newcommand{\p}{\partial}
\newcommand{\I}{\infty}
\newcommand{\ti}{\widetilde}
\newcommand{\EQ}[1]{\begin{equation}\begin{split} #1 \end{split}\end{equation}}
\newcommand{\Del}[1]{}
\def\ti{\tilde}
\numberwithin{equation}{section}
\newtheorem{thm}{Theorem}[section]
\newtheorem{cor}[thm]{Corollary}
\newtheorem{lem}[thm]{Lemma}
\newtheorem{prop}[thm]{Proposition}
\theoremstyle{remark}
\newtheorem{rem}{Remark}[section]
\newtheorem{defn}{Definition}[section]
\def\eps{\varepsilon}
\def\nn{\nonumber}
\def\lam{\lambda}
\def\lan{\langle}
\def\ran{\rangle}
\def\cD{\mathcal{D}}
\def\const{\mathrm{const}}
\begin{document}

\title[Exotic blowup solutions]{Exotic blowup solutions for the $u^5$ focusing wave equation in~$\R^3$}

\author{Roland Donninger, Min Huang, Joachim Krieger, Wilhelm Schlag}

\subjclass{35L05, 35B40}

\keywords{critical wave equation, blowup construction}

\thanks{Support of the National Science Foundation  DMS-0617854, DMS-1160817 for the fourth author, and  the Swiss National Fund for
the third author  are gratefully acknowledged. The latter would like to thank the University of Chicago for its hospitality in August 2012}

%The abstract of your paper
\begin{abstract}
For the critical focusing wave equation $\Box u = u^5$ on $\R^{3+1}$ in the radial case, we construct a family of
blowup solutions which are obtained from the stationary solutions $W(r)$ by means of a
dynamical rescaling $\lambda(t)^{\frac12} W(\lambda(t)r)+\text{\tt correction}$ with $\lambda(t)\to\I$ as $t\to 0$.
The novelty here lies with the scaling law $\lambda(t)$ which
eternally oscillates between various pure-power laws. %The construction
%is a refinement of the one employed previously by the last two authors in their joint work with Daniel Tataru.
\end{abstract}

\maketitle

\section{Introduction}

The energy critical focusing wave equation in $\R^3$
\begin{equation}\label{eq:foccrit}
\Box u = u^5,\,\Box = \partial_t^2 - \triangle
\end{equation}
has been the subject of  intense investigations in recent years.
This equation is known to be locally well-posed in the space $\HH:=\dot H^{1}\times L^{2}(\R^{3})$,
meaning that if $(u(0),u_{t}(0))\in\HH$, then there exists a solution locally in time and continuous in time
taking values in~$\HH$.
Solutions need to be interpreted in the Duhamel sense:
\EQ{
u(t) = \cos(t|\nabla|)f + \frac{\sin(t|\nabla|)}{|\nabla|} g + \int_{0}^{t} \frac{\sin((t-s)|\nabla|)}{|\nabla|} u^{5}(s)\, ds
}
These solutions $\cL_{quintic}(u)=0$  have finite energy:
\[
E(u,u_t) = \int_{\R^{3}} \big[\frac12(u_t^2+|\nabla u|^2)-\f{u^6}{6}\big]\, dx =\const
\]
 The  recent series of papers \cite{DKM1} -- \cite{DKM4} establishes a complete classification of all {\it{possible}} type-II blow up dynamics. It remains, however, to investigate  the {\em existence}
of {\em all} allowed scenarios in this classification. Steps in this direction were undertaken in~\cite{KS}, \cite{KST}, \cite{DonKri12}, where a constructive approach to actually exhibit and thereby prove the existence of such type-II dynamics was undertaken. Recall that a type-II blow up solution $u(t, x)$ with blowup time $T_*$ is one for which
\[
\limsup_{t\rightarrow T_{*}}\|u(t, \cdot)\|_{H^1} + \|u_t(t, \cdot)\|_{L_x^2}<\infty
\]
but of which no extension in the usual sense of well-posedness theory in $\dot H^1\times L^2$ exists beyond time~$T_*$.
In \cite{DKM4}, it is demonstrated that such solutions can be described as a sum of dynamically re-scaled ground states
\[
\pm W(x) = \pm \big(1+\frac{|x|^2}{3}\big)^{-\frac{1}{2}}
\]
plus a radiation term. In particular, for solutions where only one such bulk term is present, one can write the solution as
\begin{equation}\label{eq:typeII}
u(t, x) = W_{\lambda(t)}(x) + \eps(t,x) + o_{\dot{H}^1}(1),\,W_{\lambda}(x) = \lambda^{\frac{1}{2}}W(\lambda x),\,\eps(t,\cdot)\in \dot{H}^1
\end{equation}
where the error is in the sense as $t\rightarrow T_*$. Moreover,  we have the dynamic condition
\begin{equation}\label{eq:rate}
\lim_{t\rightarrow T_*}(T_*-t)\lambda(t) = \infty
\end{equation}
In \cite{KST}, it was shown that such solutions with $\lambda(t) = t^{-1-\nu}$ do exist, where $\nu>\frac{1}{2}$ is arbitrary.
In~\cite{KrSch} the latter condition was relaxed to~$\nu>0$.

It is natural to ask which rescaling functions are admissible for~\eqref{eq:typeII} -- both in general,  and in
particular  within the confines of the method developed in~\cite{KST}, \cite{KrSch}.
It seems very difficult (perhaps hopeless) to answer this question in full generality.  Nevertheless, important progress
has been made in recent years such as in the deep work of Rapha\"el, Rodnianski~\cite{RR}, and Hillairet, Rapha\"el~\cite{HR}
who studied stable blowup laws (relative to a suitable topology) for energy critical equations.

The purpose of this paper is to exhibit
an uncountable family of rates which are not of the pure-power type as above. Our main result, which is in the spirit of \cite{KST0, KST, KST2},  is as follows.

\begin{thm}\label{Main}
Let  $\nu>3$ and $|\eps_0|\ll 1$ be arbitrary and define
\EQ{\label{Min0}
\la(t):=t^{-1-\nu} \exp( -\eps_0 \sin(\log t)),\quad 0<t<\f12
}
Then there exists a radial energy
solution $u$ of \eqref{eq:foccrit} which blows up precisely at
$r=t=0$  and which has the following property: in the cone $|x|=r\le
t $ and for small times $t$ the solution has the form
\EQ{\label{uWeta}
u(t,r) = \lambda^\frac12(t) W(\lambda(t) r) + \eta(t,x)
}
where $$\int_{[|x|<t]}  \big[|\nabla \eta(t,x)|^2+ |\eta_t(t,x)|^2 +
|\eta(t,x)|^6\big]\, dx   \to0\text{\ \ as\ \ }t\to0$$ and outside
the cone $u(t,r)$ satisfies
\[
\int_{[|x|\ge t]} \big[|\nabla u(t,x)|^2+ |u_t(t,x)|^2 +
|u(t,x)|^6\big]\, dx <\delta
\]
for all sufficiently small $t>0$ where $\de>0$ is arbitrary but fixed. In particular, the energy of these
blow-up solutions can be chosen arbitrarily close to~$E(W,0)$,
i.e., the energy of the stationary solution.
\end{thm}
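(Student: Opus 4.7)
The plan is to adapt the iterative parametrix construction of~\cite{KST}, \cite{KrSch} to accommodate the oscillatory scaling law~\eqref{Min0}, which is no longer a pure power of~$t$.  I would proceed in four stages, the last of which encapsulates the main difficulty.

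First, pass to self-similar coordinates $a = \la(t)r$ and a new time $\ta$ with $d\ta/dt = \la(t)$, so that $\ta\to\infty$ as $t\to 0$.  Writing $u(t,r) = \la(t)^{1/2} v(\ta,a)$ transforms~\eqref{eq:foccrit} into a perturbed wave equation for $v$ centred on the static profile $W(a)$, with linearisation $\LL_+ := -\p_a^2 - \tfrac{2}{a}\p_a - 5W^4$ on $(0,\infty)$.  The scalar quantity driving the ansatz is $b(t) := t\la'(t)/\la(t) = -(1+\nu) - \eps_0 \cos(\log t)$, which is almost periodic in $\log t$ rather than constant in~$\log t$.

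Next, build an approximate solution $u_N = \sum_{k=0}^N v_k$ iteratively, with $v_0 = \la(t)^{1/2} W(\la(t)r)$ and each correction $v_k$ obtained by inverting $\LL_+$ against the residual error of $u_{k-1}$ via the distorted Fourier transform attached to~$\LL_+$.  For a pure-power $\la$ these profiles would be polynomials in $t$ and $\log t$ times radial functions of~$a$; here one must enlarge the ansatz class to polynomials in $(t, \sin(\log t), \cos(\log t))$ and verify at each step that the new correction is strictly smaller in the natural norms adapted to the cone $a \le \la(t)t$.  Once a sufficiently accurate $u_N$ is in place, the remainder $\eta := u - u_N$ satisfies a wave equation with forcing of size $O(t^M)$ for $M = M(N)$ arbitrarily large, plus a nonlinearity at most quintic in~$\eta$.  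A contraction argument in the Banach space of~\cite{KST}, \cite{KrSch}---built from weighted $L^2$-norms on the distorted Fourier side of $\LL_+$---then produces a small $\eta$ inside the backward light cone $r \le t$, giving the $o_{\dot H^1\times L^2}(1)$ bulk estimate.  Finite speed of propagation combined with standard small-data scattering allows gluing on an exterior piece with energy less than any prescribed~$\de$, yielding the second integral bound.

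The technical heart of the proof, and the principal obstacle, lies in the iterative stage.  When $b(t)$ is constant the iteration reduces, after separation of the $t$-variable, to a finite sequence of ODE problems on $(0, \infty)$ for $\LL_+$-profiles, and the resulting Fuchsian analysis near the irregular singular point $a = \infty$ controls everything.  With the oscillatory $\cos(\log t)$ perturbation, each new correction $v_k$ generates further almost-periodic Fourier modes $e^{\pm i n\log t}$ in the residual error; one must therefore run the whole iteration in an almost-periodic Fourier setting, track the proliferation of modes, and verify that none of them resonate with the spectrum of $\LL_+$ (the absence of positive eigenvalues of $\LL_+$ on the radial sector being decisive here).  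The hypothesis $\nu > 3$ should enter at this point: it supplies the decay margin needed for the triple series over modes, over iteration steps, and over powers of $t$ to converge absolutely in the requisite norms.  Once this book-keeping is established, the remainder of the construction follows the path of~\cite{KST}, \cite{KrSch} with only cosmetic modifications.
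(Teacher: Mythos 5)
Your proposal reproduces the general skeleton of~\cite{KST}--\cite{KrSch} (self-similar coordinates, distorted Fourier transform for the linearised operator, iterative approximate solution plus a contraction for the remainder, exterior gluing by finite speed of propagation), but at the crucial step it assumes something the paper explicitly says is unavailable. You propose to build $u_N = \sum_{k=0}^N v_k$ with $N$ large, so that the residual is $O(t^M)$ for $M=M(N)$ arbitrarily big, and then to close a contraction using $t^M$ as the smallness parameter. The paper states that for the oscillatory law~\eqref{Min0} the renormalization iteration can only be carried out \emph{two} steps (yielding $u_2 = u_0 + v_1 + v_2$): already at the second correction one must introduce an infinite almost-periodic Fourier series in $\log t$ (cf.~\eqref{eq:w0exp}), and the estimates~\eqref{eq:Rest}--\eqref{eq:gd2est} which control its convergence only hold because of the smallness of $|\eps_0|$ and do not propagate in a controlled way to further iterations. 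Consequently the residual after two steps is merely $O(\mu^{-2}) \approx O(t^{2\nu})$ per the fixed scaling, not $O(t^M)$ with $M$ arbitrary, and the linear part of the remainder equation (the transference term $2\beta\mathcal{K}\hat{\mathcal{D}}\underline{x}$) is \emph{not} small.

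This is exactly the obstruction that drives the main novelty of the paper, which your proposal misses: the fixed-point problem is abstractly $x = Hx_0 + HAx + HF(x)$ with $\|HA\|$ of order one, and one must instead prove that $\|(HA)^n\|$ becomes small for $n$ large (a Volterra-type phenomenon), establish the Neumann-series inverse $(1-HA)^{-1}$, and only then contract. Proving $\|(HA)^n\| \to 0$ is itself nontrivial; it occupies the bulk of Section~3 and requires splitting $\mathcal{K}_{cc}$ into diagonal and off-diagonal parts (Lemmas~\ref{lem:Kccnd}, \ref{lem:Kccd}), oscillatory-integral bounds exploiting the phase in $\hat{H}_c$ (Lemma~\ref{lem:offdiag}), an $L^q$-bound for a truncated Hilbert transform (Lemma~\ref{lem:HT}), and orthogonality relations between the frequency-localized pieces (Lemma~\ref{lem:orth}). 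None of this appears in your outline. A secondary imprecision: the ``resonance'' you invoke between the modes $e^{\pm in\log t}$ and ``positive eigenvalues of $\mathcal{L}_+$'' is not the relevant phenomenon; $\mathcal{L}$ has one \emph{negative} eigenvalue $\xi_d<0$ plus absolutely continuous spectrum $[0,\infty)$, and the convergence of the $\log t$-Fourier series is controlled by $|\eps_0|\ll1$ (whose role you omit), not by the spectral theory of $\mathcal{L}_+$. As written, your plan would stall at the point where the contraction should close.
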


We remark that
\[
\la(t)=t^{-1-\nu(t)},\quad \nu(t)=\nu + \eps_0\frac{\sin(\log t)}{\log t} \to\nu \text{\ \ as\ \ } t\to0+
\]
This shows that $\nu(t)$ eternally oscillates around the constant $\nu$, but does approach  that constant.
Currently we do not know if it is possible to have such solutions for which $\nu(t)$ is not asymptotically constant.

The starting point of our investigation was to adapt the method from~\cite{KST} to the setting where $\la(t)$
is not restricted to the class of pure-power laws. This turns out to run into serious difficulties essentially
from the beginning, with the ``renormalization construction" of the approximate solution being the first
serious obstacle. Recall from~\cite{KST} that this construction relies on an iterative procedure and involves
delicate book-keeping of various asymptotic expansions of the approximate solutions, the corrections, as
well as the errors. For the more general rates $\la(t)$ this cannot be done in the same fashion, and we succeeded
in a much modified fashion for the laws~\eqref{Min0}; however, only two steps of the iteration seem feasible.
This then forces one to confront a very major difficulty which was not present in~\cite{KST}; namely the lack
of a suitable smallness parameter which allowed for the ultimate contraction argument yielding an {\em exact}
solution rather than an approximate one to go through. In absence of this small parameter we are forced
to follow a different route. The idea is very simple but its actual implementation turns out
to be quite
subtle. Schematically, we have to deal with
a fixed point problem on a Banach space of the form
\[ x=F(x)+Ax + x_0 \]
where the norm of the linear operator $A$ is not small.
However, it turns out that $A^n$ has small operator norm provided $n$ is sufficiently large.
This implies the existence of $(1-A)^{-1}$ (via the Neumann series) and thus, we may re-write
the problem at hand as
\[ x=(1-A)^{-1}F(x)+(1-A)^{-1}x_0 \]
which we then solve by the Banach fixed point theorem.
Thus, a large part of the present paper is devoted to the development of a technique that allows
one to show smallness of $\|A^n\|$. In order to succeed, we have to exploit the fine-structure
of the operator $A$, in particular smoothing properties and oscillations.

 \section{The approximate solution for a modified power-law rescaling}
 \label{sec:approx}

\subsection{Generalities}

The radial quintic wave equation in $\R^3$ is
\EQ{\label{u5}
\cL_{quintic} u := u_{tt}-u_{rr}-\f2r u_r - u^5=0
}
A special stationary solution is $W(r)=(1+r^2/3)^{-\f12}$.  By scaling, $\la^{\f12}W(\la r)$ is also a solution for any $\la>0$.
We are interested in letting $\la$ depend on time. More precisely, we would like to find solutions $\cL_{quintic} u=0$
of the form
\EQ{\label{cusp}
u(t,r)= \la(t)^{\f12} W(\la(t) r) + \eps(t,r),\qquad \la(t)\to\infty \text{\ \ as\ \ }t\to0+
}
and $\eps$ small in a suitable sense. It suffices to show that $\eps$ remains small in energy, since  this ensures that the solution blows up at time $t=0$ by the mechanism
of ``energy concentration" at the tip of the light-cone $(t,r)=(0,0)$ (think of solving backwards in time).
In the paper \cite{KST}  such solutions were found with $\la(t)=t^{-1-\nu}$, and $\nu>\f12$ constant.
The goal here is to allow for more general functions; more specifically, we will set
\EQ{\label{Min}
\la(t)=t^{-1-\nu} \exp( -\eps_0 \sin(\log t)),\quad \nu>3,\; |\eps_0|\ll 1
%where $\nu(t)$ is slowly oscillating (e.g.\ $\nu(t)=\nu_0+\sin(\log(-\log t))$), or $\la(t)=t^{-1-\nu_0}(10+\sin(\eps\log t))$ (why these choices will become more clear later).
}
This is of the form $\la(t)=t^{-1-\nu(t)}$ with
\[
\nu(t) = \nu + \eps_0\frac{\sin(\log t)}{\log t} \to\nu \text{\ \ as\ \ } t\to0+
\]
For future reference, we introduce $\mu(t):=t\la(t)$, and
\[
\kappa(t):= - \f{t\dot\mu(t)}{\mu(t)}\]
so that for \eqref{Min} we obtain
\[
\kappa(t)  = \nu + \eps_0\cos(\log t)
\]
Our goal is to prove the following result. In what follows, $R=r\la(t)$.

\begin{prop}\label{prop1}
Let $\la(t)$ be as in \eqref{Min} and $t_0\ll 1$.

(i) There exists some $u_2(t,r)\in C^2(\{0<t<t_0, 0\leqslant r\leqslant t\})$ such that
\EQ{\label{u2bd}
u_2(t,r) = \sqrt{\la(t)} (W(R) + \mu^{-2}(t)O(R)), \qquad 0<t<t_0, 0<r<t
}
and $e_2:=\cL_{quintic}u_2$ satisfies
\EQ{\label{e2bd}
t^2 \la^{-\f12}(t)e_2(t,r)=\mu^{-2}(t)O\left(\frac{\log (R+2)}{R+1}\right), \qquad 0<t<t_0, 0<r<t
}
(ii) For $0<t<t_0$, $0<r< t/2$ and all $k,j\geqslant 0$ we have
\EQ{\label{u2der}
\partial_t^k\partial_r^j  u_2(t,r) =\partial_t^k\partial_r^j \sqrt{\la(t)} W(R)+t^{-k}r^{-j}\sqrt{\la(t)}\mu^{-2}(t)O(R)  }
and
\EQ{\label{e2der}
\partial_t^k\partial_r^j  \left( t^2 \la^{-\f12}(t)e_2(t,r)\right) =t^{-k}r^{-j}\mu^{-2}(t)O\left(\frac{\log (R+2)}{R+1}\right)  }
The same bound applies without the restriction $r<\frac{t}{2}$, provided $k+j\leqslant 2$.

(iii) The function $u_2(t, r)$ admits a $C^2$- extension (on fixed time slices) beyond the light cone $r\leq t$ with the property that given $\delta>0$,
\[
\int_{r\geq t}[|\partial_r u_2|^2 + \partial_t u_2^2 + u_2^6](t,r)\, r^2\, dr<\delta
\]
provided $t<t_0$ is sufficiently small.

\end{prop}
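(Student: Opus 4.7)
The plan is to adapt the renormalization iteration of~\cite{KST}, but carried out for only two steps rather than to all orders; this already delivers the $\mu^{-2}$-gain stated in \eqref{u2bd}--\eqref{e2bd}. Start from
\[
u_0(t,r):=\sqrt{\la(t)}\,W(R),\qquad R=\la(t)r,
\]
and compute $e_0:=\cL_{quintic}u_0$. Because $W$ solves $-W_{RR}-\tfrac{2}{R}W_R=W^5$, the elliptic-plus-nonlinear part of $\cL_{quintic}$ cancels exactly on $u_0$, so $e_0$ originates entirely from $\p_t^2$. Each $\p_t$ produces the factor $\dot\la/\la=-(1+\kappa(t))/t$ together with the rescaling derivative $(\tfrac12+R\p_R)$ acting on $W$, so schematically
\[
t^{2}\la^{-1/2}\,e_0 \;=\; P_0\bigl(R;\,\kappa(t),\,t\dot\kappa(t),\,t^{2}\ddot\kappa(t)\bigr),
\]
where $P_0$ is polynomial in its last three arguments (all uniformly bounded for $|\eps_0|\ll 1$) with coefficients smooth in $R$ and decaying like $1/R$ at infinity, reflecting the tail $W\sim\sqrt{3}/R$.

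To gain the factor $\mu^{-2}$, I would solve at leading order the linearized static equation $Lv_1=-P_0$, where $L:=-\p_R^{2}-\tfrac{2}{R}\p_R-5W^{4}$ is the linearization at $W$. The operator $L$ has the explicit zero mode $\La W$ (decaying like $1/R$) and a second linearly independent solution bounded at infinity, so variation of parameters yields $v_1(t,R)$ regular at $R=0$ with at most linear growth $O(R)$ as $R\to\infty$. Setting $u_1:=u_0+\mu^{-2}\sqrt{\la}\,v_1(t,R)$, the elliptic-plus-nonlinear part of $\cL_{quintic}u_1$ contributes $t^{-2}\sqrt{\la}\bigl(\mu^{-2}L v_1+O(\mu^{-4})\bigr)$ and cancels $e_0$ to leading order, while the time derivatives applied to $\mu^{-2}\sqrt{\la}\,v_1$ and the quintic cross-terms leave a residual of size $\mu^{-2}t^{-2}\sqrt{\la}$ with an improved $R$-profile. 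A second correction $v_2$ of the same analytic type then eliminates the slowly-decaying part of the residual and yields
\[
u_2(t,r)=\sqrt{\la(t)}\bigl[W(R)+\mu^{-2}(t)\,v(t,R)\bigr],\qquad v(t,R)=O(R),
\]
which is~\eqref{u2bd}, together with the error bound~\eqref{e2bd} governed by the sharp profile $\log(R+2)/(R+1)$.

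For part~(ii) I would use that $\p_r$ acts as $\la\,\p_R$, while $\p_t$ either produces $\dot\la\,\p_R$ acting on the $R$-profile or differentiates a modulation coefficient (for which $t^{k}\p_t^{k}\kappa=O(1)$ uniformly in $t$ for all $k$). The claimed $t^{-k}r^{-j}$ scaling then follows by induction from the $R$-smoothness of the profile. In the near-cone region $t/2<r\leqslant t$ the variable $R\sim\mu(t)$ is large; for $k+j\leqslant 2$ this is absorbed into the stated $R$-profile, but higher-order derivatives mix the $(t,r)$- and $R$-scales in a manner that cannot be globally controlled, forcing the restriction $r<t/2$. For part~(iii), I would extend $u_2$ by multiplication with a smooth cut-off $\chi(r/t)$ supported in $r\leqslant(1+\eta)t$ with $\eta=\eta(\delta)$ small. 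On $r\geqslant t$ the asymptotics $W(R)\sim\sqrt{3}/R$ gives $\sqrt{\la}\,W(R)\sim(\sqrt{\la}\,r)^{-1}$; a direct computation shows that the $\dot H^{1}$, kinetic, and $L^{6}$ integrals over $r\geqslant t$ all decay in negative powers of $\mu(t)$, while the truncation layer $t<r<(1+\eta)t$ contributes energy $\lesssim\eta$, which is then chosen less than $\delta$.

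The principal technical obstacle is that, unlike in~\cite{KST} where $\la$ is a pure power, each application of $\p_t$ to~\eqref{Min} generates the oscillation $\cos(\log t)$ with no further decay, so no third iteration step would produce an additional factor of $\mu^{-2}$. The two-step construction must therefore be engineered so that the \emph{entire} residual structure---including its full dependence on $\kappa$ and its time derivatives---collapses into the single profile $\mu^{-2}(t)\,O(\log(R+2)/(R+1))$ of~\eqref{e2bd}. This delicate bookkeeping of the $(t,R)$-dependence of $v_1$, $v_2$ and of their interactions through the quintic nonlinearity is the main source of effort, and it is precisely the absence of a third-iteration smallness parameter that motivates the contraction scheme described in the introduction for the rest of the paper.
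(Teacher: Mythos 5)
There is a genuine gap in the nature of the second correction. You write that $v_2$ is ``of the same analytic type'' as $v_1$, i.e.\ obtained by solving a static ODE in $R$ built from the linearized operator $L$. That cannot work. After the first correction, the principal part of the residual in the expression \eqref{e1*} is $\mu^{-2}(t)\,\om(t)f(R)$, and since $f(R)=O(R)$ this term does \emph{not} decay in $R$ --- it grows linearly inside the light cone, where $R$ can be as large as $\mu(t)$. If you tried to remove it by solving $L_0 v_2 \sim R$ you would obtain $v_2\sim R^3$, and inserting this into $u_2$ produces a contribution $\mu^{-4}R^3 \lesssim \mu^{-1}$, \emph{worse} than what you started with; the iteration would not close. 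The paper's second correction is of a structurally different kind: it sets $v_2 = \la^{1/2}\bigl(\mu^{-1}q_1(a,t)+\mu^{-2}q_2(a,t)\bigr)$ where $a=r/t$ is the self-similar variable, and the $q_j$ solve the \emph{hyperbolic} problem $t^2\bigl(-(\p_t+\beta_j/t)^2+\p_{rr}+\tfrac{2}{r}\p_r\bigr)q_j=c_j(t)a^{2-j}$, precisely because the $O(R)$ growth in the residual lives near $r\sim t$ (where $a\sim 1$) and can only be killed by reinstating the time derivative. This two-regime alternation (elliptic corrections in $R$ near $r=0$, wave-type corrections in $a$ near $r=t$) is the essential mechanism inherited from \cite{KST}, and skipping it breaks the whole construction.

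Relatedly, your proposal does not touch the genuine novelty of this Proposition, namely Lemma~\ref{lem1}. For the pure power law of \cite{KST} the coefficients $\beta_j$ in the wave operator are constant and $q_j$ are explicit; here $\beta_j(t)=\tilde\nu_j+2\tilde\eps_j\cos(\log t)$ oscillates eternally, and the paper must construct $q_j$ as a convergent double series $\sum_{n,m}\tilde\eps^n g_{n,m}(a)t^{(n-2m)i}$, proving recursive bounds on $g_{n,m}$ and its first two derivatives. Your observation that ``each application of $\p_t$ generates the oscillation $\cos(\log t)$ with no further decay'' is correct, but it is an obstruction that already must be solved to complete the present two-step construction, not merely a reason to avoid a third step; the solution lives inside Lemma~\ref{lem1}, which your outline omits. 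Finally, on part (iii): extending $u_2$ merely by multiplying with a cut-off is insufficient since $u_2$ is not a priori defined for $r>t$; the paper first performs a second-order Taylor extension across $r=t$ and then cuts off, and the $C^2$ regularity and smallness of the external energy both depend on the derivative bounds \eqref{u2der} with $k+j\le 2$ at $r=t$, which is a nontrivial part of (ii) that your sketch does not establish.
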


The proof will be given in Section \ref{prop1proof}.

\subsection{The bulk term}
Define
\EQ{\label{u0def}
u_0(t,r)=\la(t)^{\f12} W(r\la(t)) = \la(t)^{\f12} W(R)
}
While $u_{0}$ is very far from being an approximate solution, the construction in \cite{KST} for $\la(t)=t^{-\alpha}$ where $\alpha>1$ is {\bf constant} shows  that one can add
successive corrections via an iterative procedure
\[
u=u_{0}+v_{1}+v_{2}+ v_{3} +\ldots +v_{k}
\]
so that this function approximately solves~\eqref{u5} in the light cone $\{r\le t\ll 1\}$.  To be specific, we  achieved that $\cL_{quintic} u(t)$ goes to zero like $t^{N}$ in the energy norm
as $t\to0$ where $N$ can be made arbitrarily large by taking~$k$ large.

For \eqref{Min} we will content ourselves with two steps of the construction
only, i.e., $u=u_{0}+v_{1}+v_{2}$.
  Let us first compute the error resulting from~$u_{0}$.

Define $\cD:=\f12+r\p_r=\f12+R\p_R$.  Then
\EQ{
e_0:=\cL_{quintic} u_0 &= \la^{\f12}(t) \Big [ \Big( \f{\la'}{\la}\Big)^2(t) (\cD^2 W)(R) + \Big(\f{\la'}{\la}\Big)^{\prime}(t) (\cD W)(R)  \Big] \\
t^2 e_0 &=: \la^{\f12}(t) \Big [ \om_1(t) \f{1-R^2/3}{(1+R^2/3)^{\f32}}   + \om_2(t) \f{ 9-30R^2+R^4}{(1+R^2/3)^{\f52} }  \Big] \label{e0}
}
We note that \eqref{Min} satisfies
\EQ{\label{ass1}
\f{t\la'(t)}{\la(t)},\quad  \f{t^2\la''(t)}{\la(t)}  = O(1),\quad t\to0+
}
and analogously for higher derivatives. Moreover, the functions on the left remain bounded under $(t\p_t)^\ell$ for any~$\ell$;
the same properties hold for $\omega_1(t), \omega_2(t)$.

Then $t^2 e_0 = \la(t)^{\f12}O(R^{2}\lan R\ran^{-3})$ uniformly in $0<t\ll 1$ (with derivatives).
Clearly, this error  blows up as $t\to0$ like $t^{-2}$.

  \subsection{The first correction}\label{sec:v1}

  Then $t^2 e_0 = \la(t)^{\f12}O(R^{2}\lan R\ran^{-3})$   as $R\to\I$.
This error   blows up as $t\to0$ like $t^{-2}$. The goal is now to reduce it --- in fact turn it into an error that vanishes as $t\to0$ ---  by adding corrections
to~$u_{0}$, the first one being~$v_{1}$. We will do this by setting   $\lam^2(t) L_0 v_1 = e_0$ where
\EQ{
L_0 &:= \p_R^2 + \f2R\p_R +5 W^4(R)
}
Note that this is the linearized operator obtained by plugging $u_{0}+v_{1}$ into~\eqref{u5} and discarding~$\p_{t}$ altogether.
While this may seem strange, the idea is to look first at the regime $0<r\ll t$ where $\p_{t}$ should matter less than~$\p_{r}$.
We shall see shortly that $v_{1}$ has the good property that it decays like $(t\la(t))^{-2}$, but it produces errors for the nonlinear PDE that grow in~$r$
too strongly. To remove this growth, we carry out a correction
at the second stage by solving a suitable differential operator. At this stage the self-similar variable~$a=\f{r}{t}$ becomes important.

Now we discuss~$v_{1}$ in more detail.
A fundamental system of $L_0$ is
\EQ{\label{L0 fund}
\fy_1(R):=\f{1-R^2/3}{(1+R^2/3)^{\f32}},\quad \fy_2(R):=\f{1-2R^2 +R^4/9}{R(1+R^2/3)^{\f32}}
}
The operator
\EQ{\label{tilde L0}
\tilde L_0 = R L_0\, R^{-1} = \p_R^2  +5 W^4(R)
}
has a fundamental system
\EQ{\label{tilde fundsys}
\tilde \fy_1(R) &:=\f{R(1-R^2/3)}{(1+R^2/3)^{\f32}} =  \tilde\psi_1 (R^{-2})\\
\tilde \fy_2(R) &:= \f{1-2R^2 +R^4/9}{(1+R^2/3)^{\f32}} = R \tilde\psi_2 (R^{-2})
}
The right-hand sides here are for large~$R$, and the $\tilde\psi_j$ are analytic around~$0$.
The Wronskian is
\EQ{\nn
\tilde\fy_1'(R)\tilde \fy_2(R)-\tilde \fy_1(R)\tilde \fy_2'(R) = 1
}
We let $\mu(t)=t\la(t)$ as above, and
\EQ{
\mu^2(t) L_0 v_1 = t^2 e_0,\quad v_1(0)=v_1'(0)=0
}
We claim that
\EQ{\nn
v_1(t,r) =  \mu^{-2}(t) L_0^{-1} t^2  e_0  =  \la^{\f12}(t)  \mu^{-2}(t) O(R) \text{\ \ as\ \ }R\to\I
}
To be more specific,
write
\EQ{
t^2\, e_0 = \la^{\f12}(t)\big( \om_1(t) \, g_1(R) + \om_2(t) \, g_2(R) \big)
}
see \eqref{e0}.  Note that the $g_j$ are of the form
\EQ{\label{gj analytic}
g_j(R)= R^{-1} \phi_j(R^{-2}) \quad R\gg 1
}
where $\phi_j$ is analytic around $0$.
Then  $L_0 f_j = g_j$ with $f_j(0)=f_j'(0)=0$ satisfies
\EQ{\label{int}
f_j(R) = R^{-1} \Big( \tilde\fy_1(R) \int_0^R \tilde\fy_2(R')R' g_j(R')\, dR' -  \tilde\fy_2(R) \int_0^R \tilde\fy_1(R')R' g_j(R')\, dR'\Big)
}
for $j=1,2$. Then one checks that
\EQ{\label{fasymp}
f_j(R) &= b_{1j} R + b_{2j} + b_{3j}\f{\log R}{R} + O(1/R)\text{\ \ as\ \ }R\to\I \\
f_j(R) &= c_{1j} R^2 +O(R^4) \text{\ \ as\ \ }R\to 0
}
In fact,   around $R=0$ the $f_j(R)$ are even analytic functions, whereas  around $R=\I$ one has the representation
\EQ{\label{fasymp infty}
f_j(R) &= R(b_{1j} + b_{2j} R^{-1}+ R^{-2}\log R\;\fy_{1j}(R^{-2}) +  R^{-2} \fy_{2j}(R^{-1})  ) \\
&=: R(F_j(\rho) + \rho^{2} G_{j}(\rho^2)\log \rho)
}
where $\fy_{1j}, \fy_{2j}$ and $F_j, G_j$ are analytic around zero, with $\rho:=R^{-1}$.  This follows from~\eqref{tilde fundsys}, \eqref{gj analytic}, and~\eqref{int}.
For future reference, we remark that the structure in~\eqref{fasymp infty} is preserved under application of~$\cD$.
In particular, and abusing notation somewhat, we have
\EQ{\label{v1w1}
v_1(t,r) = \la^{\f12}(t)  \mu^{-2}(t)  (\om_1(t) f_1(R) + \om_2(t) f_2(R)) =: \la^{\f12}(t) \mu^{-2}(t) \om(t)  f(R)
}
Define
$$u_1:= u_0+ v_1 = \la^{\f12}(t)\big( W(R) + \mu^{-2}(t) \om(t) f(R) )$$
In view of~\eqref{fasymp}, and $R\le \mu$ (recall that we are inside of the light cone $r\le t$)
\EQ{\label{u1 R}
u_1(t,r) = \la^{\f12}(t) O(R^{-1}) \quad R\geqslant1 \\
u_1(t,r) = \la^{\f12}(t) O(1) \quad 0\leqslant R<1
}
uniformly in $0<t<1$; moreover, we may apply $t\p_t$ or $r\p_r=R\p_R$
any number of times without affecting this asymptotic property.
Finally, $\la(t)^{-\f12} u_1(t,r)$ is an even analytic function around $R=0$.

 \subsection{The  error from $u_1$}
Set  $e_1:= \cL_{quintic} (u_1)$.
Then
\EQ{\label{e1def}
e_1 &=  \p_t^2 v_1 - 10 u_0^3 v_1^2 -10 u_0^2 v_1^3 -5 u_0 v_1^4 - v_1^5
}
One has
\EQ{\label{e1}
t^2 \la^{-\f12}(t) e_1 &= \la^{-\f12}(t) ((t\p_t)^2 - t\p_t) \big(  \la^{\f12}(t) w_1(t,r\lam(t)) \big) - \mu^2(t)  (10 W^3(R) w_1^2(t,R) \\
&\qquad + 10 W^2(R) w_1^3(t,R) + 5 W(R) w_1^4(t,R) + w_1^5(t,R))
}
We write symbolically $w_1(t,R)= \mu^{-2}(t) \om(t) f(R)$. Then the nonlinearity in~\eqref{e1} is
\EQ{
& \mu^2(t)(10 W^3(R) w_1^2(t,R)  + 10 W^2(R) w_1^3(t,R) + 5 W(R) w_1^4(t,R) + w_1^5(t,R))  \\
& = \mu^{-2}(t) \big(10 W^3(R) \om^2(t) f^2(R)   + 10 W^2(R) \mu^{-2}(t) \om^3(t)f^3(R) \\
&\quad + 5 W(R)  \mu^{-4}(t) \om^4(t)f^4(R) + \mu^{-6}(t) \om^5(t) f^5(R) \big)
}
whereas
\EQ{
& \la^{-\f12}(t) ((t\p_t)^2 - t\p_t) \big(  \la^{\f12}(t)w_1(t,r\lam(t)) \big) \\
&=\left(\left(t\p_t + \frac{t\la'(t)}{\la(t)}\cD\right)^2 - \left(t\p_t + \frac{t\la'(t)}{\la(t)}\cD\right) \right)w_1(t,R)
}
Now
\EQ{\label{mutd}
&\mu^2(t) \left(t\p_t + \frac{t\la'(t)}{\la(t)}\cD\right) \mu^{-2}(t) \om(t) f(R) \\&= \Bigg( -\f{2t\dot\mu(t)}{\mu(t)}\om(t) + t\dot\om(t)  + \frac{t\la'(t)}{\la(t)}\om(t) \cD \Bigg) f(R)
}
Note that this is schematically of the form $\om(t) f(R)$ with $f$ as in~\eqref{fasymp}, and $\om(t)$ bounded together with all powers of $t\p_t$ as $t\to0+$.
Henceforth, we refer to such functions $\om(t)$ as {\em admissible}.
Thus we can write
\EQ{\label{e1*}
&t^2 \la^{-\f12}(t) e_1(t,r) \\
&= \mu^{-2}(t) \Big( \om(t) f(R) - \big(10 W^3(R) \om^2(t) f^2(R)  + 10 W^2(R) \mu^{-2}(t) \om^3(t)f^3(R)  \\ &
 \qquad \qquad + 5 W(R)  \mu^{-4}(t) \om^4(t)f^4(R) + \mu^{-6}(t) \om^5(t) f^5(R) \big) \Big)
}
We let $a=\frac{r}{t}=\frac{R}{\mu}=Rb$, $b:=\mu^{-1}$ and isolate those terms   in~\eqref{e1*} which do not decay for large~$R$.
Since we are working inside of the light-cone, we have $0\le a\le 1$.
Now, abusing notation somewhat,
\begin{align}
\mu^{-2}(t)  f(R) &= b^2 R (F(\rho)+ \rho^{2} G(\rho^2)\log \rho)  = ba (F(\rho)+ \rho^{2} G(\rho^2)\log \rho)  \nn  \\
\mu^{-2}(t)  W^3(R)  f^2(R)  &=  b^2 R^{-3} \Omega(\rho^2) R^2 (F(\rho)+ \rho^{2} G(\rho^2)\log \rho)^2 \nn  \\
&=  b^2 R^{-1}    (F(\rho)+ \rho^{2} F(\rho)\log \rho + \rho^{4} G(\rho^2)\log^2 \rho ) \label{mess1} \\
\mu^{-4}(t)  W^2(R)  f^3(R)  &=  b^4 R^{-2}\Omega(\rho^2) R^3 (F(\rho)+ \rho^{2} G(\rho^2)\log \rho)^3 \nn \\
&=  b^3 a (F(\rho)+ \rho^{2} F(\rho)\log \rho + \rho^{4} F(\rho)\log^2 \rho + \rho^{6} G(\rho^2)\log^3 \rho)   \nn
\end{align}
where $F,G$ can change from line to line.
Similarly,
\EQ{ \label{mess2}
\mu^{-6}(t)  W(R)  f^4(R)  &= b^6 R^{-1} \Omega(\rho^2)  R^4 (F(\rho)+ \rho^{2} G(\rho^2)\log \rho)^4\\
&= b^3 a^3   (F(\rho)+ \rho^{2} F(\rho)\log \rho  + \rho^{4} F(\rho)\log^2 \rho \\
&\qquad + \rho^{6} F(\rho)\log^3 \rho + \rho^{8} G(\rho^2)\log^4 \rho) \\
\mu^{-8}(t)    f^5(R)  &= b^8 R^5  (F(\rho)+ \rho^{2} G(\rho^2)\log \rho)^5 \\
&= b^3 a^5 (F(\rho)+ \rho^{2} F(\rho)\log \rho  + \rho^{4} F(\rho)\log^2 \rho \\
&\qquad + \rho^{6} F(\rho)\log^3 \rho + \rho^{8} F(\rho)\log^4 \rho + \rho^{10} G(\rho^2)\log^5 \rho)
}
From \eqref{mess1}, \eqref{mess2} we extract the
leading order
\EQ{ \label{e10*}
t^2 \la^{-\f12}(t) e_1^0(t,r)  &:= \mu^{-1}(t) ( c_1 a + c_2 b+ ( c_3 a   + c_4 a^3   + c_5  a^5 )  b^2 )
}
with $c_j=c_j(t)$   admissible functions.
Indeed, from the first line in~\eqref{mess1} we extract $ba(F(0)+\rho F'(0))= ba c_1 +  b^2 c_2$,
whereas from the fifth we extract $b^3 a F(0)$. From the second line in~\eqref{mess2} we retain $b^3 a^3 F(0)$,
and from the fifth one~$b^3 a^5 F(0)$.
The point here is that with this choice of $e_1^0$ one obtains  a decaying error as $R\to\I$
\EQ{ \label{ediff}
&t^2 \la^{-\f12}(t) ( e_1- e_1^0)(t,r) \\
& = \mu^{-2}(t) \Big[ \frac{\log R}{R} \Phi_1(t,a,b,\rho\log\rho,\rho) + \frac{1}{R} \Phi_2(t,a,b,\rho\log\rho,\rho)\Big]
}
where $\Phi_j(t,a,b,u,v)$ are polynomials in $a,b$ and analytic in $u,v$ near $(0,0)$; moreover, their time dependence is
polynomial in admissible functions. Writing $b=\frac{a}{R}$ we may
delete the terms involving~$b^2=ba/R$ on the right-hand side of~\eqref{e10*}, since they are of the form~\eqref{ediff}.
Thus, it suffices to consider the simpler leading error
\EQ{ \label{e10}
t^2 \la^{-\f12}(t) e_1^0(t,r)  &:=c_1 a  \mu^{-1}(t) + c_2\mu^{-2}(t) = c_1(t) ab + c_2(t) b^2%(  + c_2 b)  = %+ ( c_3 a   + c_4 a^3   + c_5  a^5 )  b^2 )
}
with $c_1(t), c_2(t)$ admissible.

\subsection{The second correction}
Now we would like to solve the corrector problem ``near $r=t$", i.e.,
\EQ{
t^2 \big(v_{tt}-v_{rr}-\f2r v_r   \big) = -t^2 e_1^0 %-t^2\la^{-\f12}(t)e_1^0
}
Note that we have discarded the nonlinearity on the left-hand side since it decays near~$r=t$.
This is designed exactly so as to remove the growth in~$R$.
We seek a solution in the form
\EQ{\label{v2}
v(t,r) = \la(t)^{\f12}\big( \mu^{-1}(t) q_1(a,t) + \mu^{-2}(t) q_2(a,t) \big)
}
with boundary conditions $q_1(0,t)=0, q_1'(0,t)=0$ and $q_2(0,t)=0$, $q_2'(0,t)=0$.
These translate into the boundary conditions $v(t,0)=0$, $\p_r v(t,0)=0$ at $r=0$.  This $v$ will essentially be the
function~$v_2$.
In view of
\[
\la(t)^{-\f12} \mu^\al \p_t\:  \la(t)^{\f12} \mu^{-\al} =  \p_t + t^{-1} \big( \f12 \f{t\dot\la}{\la} - \al \f{t\dot\mu}{\mu}\big)
\]
we are reduced to the system
\EQ{\label{w1}
&t^2 \Big(-\big(\p_t + \f{\beta_1(t)}{t}\big)^2 +\p_{rr}+\f2r \p_r  \Big) q_1(a,t)  =   c_1(t) a }
and
\EQ{\label{w0}
&t^2 \Big(-\big(\p_t +  \f{\beta_2(t)}{t}\big)^2 +\p_{rr}+\f2r \p_r   \Big) q_2(a,t)  =   c_2(t)
}
where
\[
\beta_j(t) = \f12 \f{t\dot\la}{\la} - j \f{t\dot\mu}{\mu}=(j-1/2)\kappa(t) -\f12 ,\quad j=1,2.
\]
We impose the boundary conditions $q_j(0,t)=\p_a q_j(0,t)=0$.

\begin{lem}\label{lem1}
Let $\la(t)$ be as in \eqref{Min} with $|\eps_0|$ sufficiently small. Equations \eqref{w1}, \eqref{w0} have solutions $q_j(a,t)$ satisfying $q_j(0,t)=\partial_{a}q_j(0,t)=0$,
$q_j(a,t)\in C^2(\{0<t<t_0, 0\leqslant a\leqslant 1\})$. Furthermore,
for $k\geqslant 0$ and $0\leqslant \ell\leqslant 2$ we have
\EQ{\label{q1a3}
\partial_{t}^k\partial_{a}^\ell q_2(a,t)=O(t^{-k}a^{2-\ell}) ; \ \
\partial_{t}^k\partial_{a}^\ell q_1(a,t)=O(t^{-k}a^{3-\ell})
}
\end{lem}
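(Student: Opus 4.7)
I pass to the self-similar variables $a = r/t \in [0,1]$ and $s = \log t$, in which $\beta_j(s) = \bar\beta_j + (j-\tfrac12)\eps_0 \cos s$ with $\bar\beta_j := (j-\tfrac12)\nu - \tfrac12$, and $c_j(s)$ is also a trigonometric polynomial in~$s$ with Fourier degree~$O(1)$. A direct commutator computation gives, on functions of $(a,s)$, the clean identities
\[
t^2\big(\p_t + \tfrac{\beta_j(t)}{t}\big)^2 = (\mathcal{Y}_j - 1)\mathcal{Y}_j, \qquad t^2\big(\p_{rr} + \tfrac2r\p_r\big) = L_a := \p_a^2 + \tfrac2a\p_a,
\]
where $\mathcal{Y}_j := \p_s - a\p_a + \beta_j(s)$. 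Writing $q_j = a^{k_j}Q_j$ with $k_j := 4-j$ builds in the required vanishing at $a=0$; using the intertwining $\mathcal{Y}_j(a^k \cdot) = a^k(\mathcal{Y}_j - k)(\cdot)$ each equation becomes
\[
\big[2k_j(k_j+1) + 2(k_j+1)a\p_a + a^2\p_a^2\big]Q_j - a^2(\mathcal{Y}_j - k_j - 1)(\mathcal{Y}_j - k_j)Q_j = c_j(s),
\]
whose right-hand side depends only on~$s$.

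Since every term preserves parity in~$a$, I seek $Q_j(a,s) = \sum_{n \ge 0} Q_j^{(n)}(s)\,a^{2n}$. Matching powers of~$a$ produces the purely algebraic recursion
\[
Q_j^{(0)}(s) = \frac{c_j(s)}{2k_j(k_j+1)}, \quad Q_j^{(n)}(s) = \frac{(\p_s + \beta_j - 2n - k_j + 1)(\p_s + \beta_j - 2n - k_j + 2)}{D_n^{(j)}}\,Q_j^{(n-1)}(s)
\]
for $n \ge 1$, with $D_n^{(2)} = 2(n+1)(2n+3)$ and $D_n^{(1)} = 2(n+2)(2n+3)$. No ODE in~$s$ needs to be solved: each step is an explicit second-order differential-plus-multiplication operator in~$s$ applied to the previous coefficient.

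The main technical obstacle is showing that this series converges uniformly on $[0,1] \times (-\infty, \log t_0]$ with enough derivatives for~\eqref{q1a3}. A naive Bernstein bound (using that the Fourier degree of $Q_j^{(n)}$ in~$s$ grows at most linearly in~$n$) only gives $\|Q_j^{(n)}\|_\infty = O(1)$, hence convergence merely for $a < 1$. To reach $a = 1$ I would exploit the Fourier structure of the recursion: on a mode $e^{ik's}$ with $|k'| \ll n$, the eigenvalue of the recursion operator is
\[
\frac{(ik' + \bar\beta_j - 2n - k_j + 1)(ik' + \bar\beta_j - 2n - k_j + 2)}{D_n^{(j)}} = 1 - \frac{\bar\beta_j + 2}{n} + O\!\Big(\frac{1 + (k')^2}{n^2}\Big),
\]
whose infinite product telescopes, by Euler's formula, to $n^{-\bar\beta_j - 2}$. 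The off-diagonal Fourier couplings from the $\eps_0 \cos s$ piece of $\beta_j$ shift modes by $\pm 1$ with weight $O(\eps_0)$ per step; an induction in~$n$ then shows that the Fourier coefficient of $Q_j^{(n)}$ at frequency~$|k'|$ is bounded by $C(C|\eps_0|)^{|k'|}\,n^{-\bar\beta_j - 2}$. For $|\eps_0| \ll 1$ the effective Fourier support therefore stays $O(1)$ and $\|\p_s^m Q_j^{(n)}\|_\infty \lesssim_m n^{-\bar\beta_j - 2}$ for every fixed~$m$. Since $\nu > 3$ gives $\bar\beta_j + 2 > 3$, the series and its derivatives $\p_a^\ell$ for $\ell \le 2$ converge absolutely on~$[0,1]$, yielding \eqref{q1a3} for $k \le 2$.

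Higher time derivatives ($k \ge 3$) are obtained inductively from the PDE itself: solving the original equation for $\p_t^2 q_j$ expresses it as $(\p_{rr} + \tfrac2r\p_r)q_j$ plus terms involving $\p_t q_j$, $q_j$, and the explicit source, each bounded by~\eqref{q1a3} at strictly lower~$k$. Since $\p_{rr} + \tfrac2r\p_r$ commutes with $\p_t$ in $(r,t)$ coordinates and equals $t^{-2}L_a$ in $(a,t)$ coordinates, iterating this scheme trades each pair of $\p_t$'s for $\p_a^\ell q_j$ with $\ell \le 2$ (plus $t^{-2}$ weights), closing the induction with the claimed $O(t^{-k} a^{k_j - \ell})$ bound.
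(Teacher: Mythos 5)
Your proposal takes a genuinely different route from the paper's. The paper expands $q_j$ as a double series $\sum_{n,m}\tilde\eps^n g_{n,m}(a)\,t^{i(n-2m)}$ in powers of $\tilde\eps$ and oscillatory modes in $\log t$, which reduces the PDE to a recursion of ODEs in $a$ solved explicitly by variation of parameters with the fundamental system $(1\pm a)^{\tilde\nu_j+1+(n-2m)i}/a$; convergence follows from the geometric bound $\|g_{n,m}^{(k)}\|_\infty\le C_0^n$ and $|\tilde\eps|<C_0^{-1}$. You instead factor out $a^{k_j}$ and expand in powers of $a^2$, converting the problem into an algebraic recursion for the coefficient functions $Q_j^{(n)}(s)$ of $s=\log t$. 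This is an attractive reformulation, and the claimed leading eigenvalue telescoping to $n^{-\bar\beta_j-2}$ is consistent with the Taylor-coefficient decay of the $(1-a)^{\tilde\nu_j+1+i\mu}$ term in the paper's explicit solutions. (Minor typo: $L_a(a^kQ)=a^{k-2}[k(k+1)Q+2(k+1)aQ'+a^2Q'']$, not $2k(k+1)$, so $Q_j^{(0)}=c_j/(k_j(k_j+1))$, and the order of the two linear factors in the displayed recursion should be swapped, though they commute so this is cosmetic.)

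The gap is in the convergence argument. First, the per-step mode coupling is not $O(\eps_0)$ as you state: expanding the recursion operator, the dominant mode-shifting piece is $(A+B)\,\delta(s)/D_n^{(j)}$ with $A+B\sim -4n$, $D_n^{(j)}\sim 4n^2$, hence $O(\eps_0/n)$. This is actually smaller than claimed, but the accumulated Fourier spread after $n$ steps is then of size $O(\eps_0\log n)$, not $O(1)$, so "effective support stays $O(1)$" is not quite right. Second, for frequencies with $|k'|\gtrsim m$ the diagonal eigenvalue at step $m$ has modulus $>1$, so the telescoping only applies to low-frequency paths and one must show the $O(\eps_0/m)$ per-shift suppression dominates. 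The proposed induction $|q_{n,k'}|\le C(C\eps_0)^{|k'|}n^{-\bar\beta_j-2}$ does not close as stated because the $O((k')^2/n^2)$ eigenvalue corrections and the $O(1/n)$ per-step off-diagonal losses degrade the decay to roughly $n^{-\bar\beta_j-2+O(\eps_0)}$; the margin $\bar\beta_1+2=(\nu+3)/2>3$ leaves room of size $(\nu-3)/2$ for $|\eps_0|$ small, so this is likely reparable, but the bookkeeping is delicate and not supplied. Finally, the closing paragraph on bootstrapping $k\ge3$ via the PDE is unnecessary (in the lemma $\partial_t$ is taken at fixed $a$, so $\partial_t|_a=t^{-1}\partial_s$, and your bounds on $\partial_s^m Q_j^{(n)}$ for every $m$ already give all $k$) and would not close as sketched anyway, since commuting $\partial_t$ past $\partial_r^2$ and converting back to $(a,t)$ variables produces $a$-derivatives of order $>2$.
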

\begin{proof}
Now, with $\dot q_j=\p_t q_j$, \eqref{w1} and \eqref{w0} can be written as
\EQ{\label{w1w0}
& t^2 \Big(-\big(\p_t + \f{\beta_j(t)}{t}\big)^2 +\p_{rr}+\f2r \p_r  \Big) q_j(a,t) \\
& = \big( (1-a^2)\p_a^2 + (2(\beta_j(t)-1)a+2a^{-1})\p_a - \beta_j^2(t) + \beta_j(t) -t \dot\beta_j(t) \big) q_j(a,t)\\
&\qquad   - (t^2\ddot  q_j(a,t)+2\beta_j(t) t\dot  q_j(a,t)) +2 at\p_a\dot q_j(a,t)=c_j(t)a^{2-j}
}
By \eqref{Min} we have 
$\beta_j(t)=\tilde{\nu}_j+2\tilde{\eps}_j\cos(\log t)$ with $\tilde{\nu}_j=(j-1/2)\nu -\f12>1$ and  $\tilde{\eps}_j=(j/2-1/4)\eps_0$. We note that the admissible functions $c_j$ are as in \eqref{e10*}, which are of the form $\om^k(t)$ in \eqref{e1*}. These are parts of $t^2 \la^{-\f12}(t) e_1(t,r)$ with $e_1$ defined in \eqref{e1def}, and they come from \eqref{mutd} applied no more than twice to $v_1$ instead of $f$, where $v_1$ is as defined in \eqref{v1w1} with $\om_j$ coming from $t^2 e_0$ in \eqref{e0}. Thus we see that $c_j(t)$ are polynomials of $\kappa(t)$ with the operator $t\p_t$ applied finitely many times, which means we can write
\begin{equation}
c_j(t)=\sum_{n=0}^{N_j}\sum_{m=0}^{n}\tilde{\epsilon}_j^{n}\tilde{c}^{(j)}_{n,m}\, t^{(n-2m)i}\label{eq:om024}
\end{equation}
since polynomials of $\kappa(t)$ have this type of expansions and they are preserved by the operator $t\p_t$.

For convenience we drop the subscript $j$ and write \eqref{w1w0} as
\EQ{\label{reducedw}
\big( (1-a^2)\p_a^2 + (2(\beta(t)-1)a+2a^{-1})\p_a - \beta^2(t) + \beta(t) -t \dot\beta(t) \big) q(a,t)\\
\qquad   - (t^2\ddot  q(a,t)+2\beta(t) t\dot  q(a,t)) +2 at\p_a\dot q(a,t)=c(a,t)
}
where $$\beta(t)=\tilde{\nu}+2\tilde{\eps}\cos(\log t)=\tilde\nu + \tilde\eps\, t^i + \tilde\eps \, t^{-i}$$ and
$$c(a,t)=\sum_{n=0}^{N}\sum_{m=0}^{n}\tilde{\eps}^{n}\tilde{c}_{n,m}(a)\, t^{(n-2m)i}$$
where $\tilde{c}_{n,m}(a)$ is linear in $a$. 
We seek a solution to \eqref{reducedw} of the form
\begin{equation}
q(a,t)=\sum_{n=0}^{\infty}\sum_{m=0}^{n}\tilde{\eps}^{n}g_{n,m}(a)\, t^{(n-2m)i}\label{eq:w0exp}
\end{equation}
where $g_{n,m}(a)=\bar{g}_{n,n-m}(a)$. Plugging (\ref{eq:w0exp})
and (\ref{eq:om024}) into (\ref{reducedw}) we see that
\begin{multline*}
\sum_{n=0}^{\infty}\sum_{m=0}^{n}\tilde{\eps}^{n}\, t^{(n-2m)i}\biggl((1-a^{2})g''_{n,m}(a)+(2(\tilde{\nu}-1)a+2a^{-1})g'_{n,m}(a)\\+2a(g'_{n-1,m}(a)+g'_{n-1,m-1}(a))
+(\tilde{\nu}-\tilde{\nu}^{2})g{}_{n,m}(a)-2g{}_{n-2,m-1}(a)\\+(1-i-2\tilde{\nu})g{}_{n-1,m}(a)+(1+i-2\tilde{\nu})g{}_{n-1,m-1}(a)-g{}_{n-2,m}(a)-g{}_{n-2,m-2}(a)\\
+(n-2m)(n-2m+i-2\tilde{\nu}i)g{}_{n,m}(a)-2i(n-2m-1)g{}_{n-1,m}(a)\\-2i(n-2m+1)g{}_{n-1,m-1}(a)+2ai(n-2m)g'_{n,m}(a)\biggr)
=\sum_{n=0}^{\infty}\sum_{m=0}^{n}\tilde{\eps}^{n}\tilde{c}_{n,m}(a)\, t^{(n-2m)i}
\end{multline*}
where $g_{n,m}(a)=0$ if $n<0$ or $|n-2m|>n$. Collecting powers
of $\tilde{\eps}$ and $t^{i}$ we obtain the equation
\begin{multline}
(1-a^{2})g''_{n,m}(a)+(2(\tilde{\nu}-1)a+2a^{-1}+2ai(n-2m))g'_{n,m}(a)\\+(\tilde{\nu}-\tilde{\nu}^{2}+(n-2m)(n-2m+i-2\tilde{\nu}i))g{}_{n,m}(a)\\
=-2a(g'_{n-1,m}(a)+g'_{n-1,m-1}(a))-(1+i-2\tilde{\nu}-2i(n-2m))g{}_{n-1,m}(a)\\-(1-i-2\tilde{\nu}-2i(n-2m))g{}_{n-1,m-1}(a)
+2g{}_{n-2,m-1}(a)\\+g{}_{n-2,m}(a)+g{}_{n-2,m-2}(a)+\tilde{c}_{n,m}(a)=:R_{n,m}(a)\label{eq:iter}
\end{multline}
Note that $R_{0,0}(a)=\tilde{c}_{0,0}(a)$.

The associated homogeneous equation
\begin{multline}
(1-a^{2})g''_{n,m}(a)+(2(\tilde{\nu}-1)a+2a^{-1}+2ai(n-2m))g'_{n,m}(a)\\+(\tilde{\nu}-\tilde{\nu}^{2}+(n-2m)(n-2m+i-2\tilde{\nu}i))g{}_{n,m}(a)=0
\end{multline}
has two solutions
\[
\frac{(1-a)^{\tilde{\nu}+1+(n-2m)i}}{a},\quad \frac{(1+a)^{\tilde{\nu}+1+(n-2m)i}}{a}
\]
and their Wronskian is $2a^{-2}(\tilde{\nu}+1+(n-2m)i)(1-a^{2})^{\tilde{\nu}+(n-2m)i}$.
Therefore by (\ref{eq:iter}), $g{}_{n,m}$ can be defined recursively as
\begin{multline}
g{}_{n,m}(a)=\frac{(1+a)^{\tilde{\nu}+1+(n-2m)i}}{2a(\tilde{\nu}+1+(n-2m)i)}\int_{0}^{a}x(1+x)^{-\tilde{\nu}-1-(n-2m)i}R_{n,m}(x)\,dx\\
-\frac{(1-a)^{\tilde{\nu}+1+(n-2m)i}}{2a(\tilde{\nu}+1+(n-2m)i)}\int_{0}^{a}x(1-x)^{-\tilde{\nu}-1-(n-2m)i}R_{n,m}(x)\, dx\label{eq:gR}
\end{multline}
which implies
\begin{multline}
(ag{}_{n,m}(a))'=\frac{(1+a)^{\tilde{\nu}+(n-2m)i}}{2}\int_{0}^{a}x(1+x)^{-\tilde{\nu}-1-(n-2m)i}R_{n,m}(x)\, dx\\
+\frac{(1-a)^{\tilde{\nu}+(n-2m)i}}{2}\int_{0}^{a}x(1-x)^{-\tilde{\nu}-1-(n-2m)i}R_{n,m}(x)\, dx\label{eq:gRd}
\end{multline}
With $g{}_{n,m}$ thus defined, \eqref{eq:w0exp} gives a formal solution to \eqref{reducedw}. In order to show that \eqref{eq:w0exp} gives a true solution, it is sufficient to show that $g_{n,m}''(a)$ is continuous and
for some $C_{0}>0$ we have $$\| g_{n,m}^{(k)}\| _{\infty}:=\sup_{a\in[0,1]}|g_{n,m}^{(k)}(a)|\leqslant C_{0}^{n}$$
for $0\leqslant k\leqslant2$, since this would imply (\ref{eq:w0exp})
is convergent and twice differentiable in both $a$ and $t$ with continuous second derivatives for $0< t<t_0,0\leqslant a\leqslant 1$, as long as $\tilde{\eps}<C_{0}^{-1}$. To show that the initial conditions $q(0,t)=\partial_{a}q(0,t)=0$ are satisfied, we only need to show $g{}_{n,m}(0)=g{}_{n,m}'(0)=0$.
By differentiating \eqref{eq:w0exp} we see that
$\partial_{t}^k\partial_{a}^\ell q(a,t)=O(t^{-k}a^{2-\ell})$ for  $k\geqslant 0$ and
$ 0\leqslant \ell\leqslant 2$. In addition, to show the second estimate of \eqref{q1a3} we will prove the inequality
\begin{equation}\label{gnmd2}
|g_{n,m}''(a)|\lesssim a\tilde{C}_3^{n+1}
\end{equation} for some $\tilde{C}_3$ and $0\leqslant a\leqslant 1/2$. Note that we do not need
to show $g_{n,m}(a)=\bar{g}_{n,n-m}(a)$ since we can simply take the real
part of $q(a,t)$ in (\ref{eq:w0exp}) to get a real solution.

Since
\begin{align*}
&\left|(1+a)^{\tilde{\nu}+(n-2m)i}\int_{0}^{a}x(1+x)^{-\tilde{\nu}-1-(n-2m)i}R_{n,m}(x)dx\right|\\
&\leqslant a\| R_{n,m}\| _{\infty}(1+a)^{\tilde{\nu}}\int_{0}^{a}(1+x)^{-\tilde{\nu}-1}dx\\
&= a\tilde{\nu}^{-1}((1+a)^{\tilde{\nu}}-1)||R_{n,m}||_{\infty}\\
&\leqslant a^2(1+a)^{\tilde{\nu}-1}||R_{n,m}||_\I
\end{align*}
\begin{align*}
&\left|(1-a)^{\tilde{\nu}+(n-2m)i}\int_{0}^{a}x(1-x)^{-\tilde{\nu}-1-(n-2m)i}R_{n,m}(x)dx\right|\\
&\leqslant a\| R_{n,m}\| _{\infty}(1-a)^{\tilde{\nu}}\int_{0}^{a}(1-x)^{-\tilde{\nu}-1}dx\\
&= a\tilde{\nu}^{-1}(1-(1-a)^{\tilde{\nu}})||R_{n,m}||_{\infty}\\
&\leqslant a^2||R_{n,m}||_{\infty}
\end{align*}
and $\sqrt{2}|z|\geqslant|\Re z|+|\Im z|$ for any $z$, we have by
\eqref{eq:gRd}
\begin{align}
|g{}_{n,m}(a)| &\leqslant\frac{\sqrt{2}(2^{\tilde{\nu}-1}+2^{-1})a\|R_{n,m}\|_{\infty}}{(|n-2m|+\tilde{\nu}+1)} \nn \\
|(ag{}_{n,m}(a))'| &\leqslant\sqrt{2}(2^{\tilde{\nu}-2}+2^{-1})a^2\|R_{n,m}\|_{\infty}\label{eq:gestr} \\
|g'_{n,m}(a)| &\leqslant a^{-1}(|(ag{}_{n,m}(a))'|+|g{}_{n,m}(a)|)\nn \\
& \leqslant\sqrt{2}(2^{\tilde{\nu}-1}+2^{-1})(1+(\tilde{\nu}+1)^{-1})\|R_{n,m}\|_{\infty} \nn
\end{align}
for all $a\in[0,1]$. We let $$\hat{c}_1=\sup_{0\leqslant a \leqslant 1, 0\leqslant m \leqslant n\leqslant N}(|\tilde{c}_{n,m}(a)|,|\tilde{c}_{n,m}'(a)|)$$
By (\ref{eq:iter}) and (\ref{eq:gestr}) we
have
\begin{align}
|R_{n,m}(a)| &=|-2((ag{}_{n-1,m}(a))'+(ag{}_{n-1,m-1}(a))') \nn \\  & \quad -(-1+i-2\tilde{\nu}-2i(n-2m))g{}_{n-1,m}(a)) \nn \\
& \quad -(-1-i-2\tilde{\nu}-2i(n-2m))g{}_{n-1,m-1}(a) \nn  \\
& \quad+2g{}_{n-2,m-1}(a)+g{}_{n-2,m}(a)+g{}_{n-2,m-2}(a)+\tilde{c}_{n,m}(a)| \nn \\
& \leqslant 2(|(ag{}_{n-1,m}(a))'|+|(ag{}_{n-1,m-1}(a))'|) \nn \\ & \quad+2(\tilde{\nu}+1+|n-2m|)(|g{}_{n-1,m}(a)|+|g{}_{n-1,m-1}(a)|) \nn \\
& \quad+4\max_{0\leqslant j\leqslant2}|g{}_{n-2,m-j}(a)|+\hat{c}_1 \\
&\leqslant C_{1}\max_{1\leqslant j\leqslant2,0\leqslant k\leqslant j}\| R_{n-j,m-k}\| _{\infty}+\hat{c}_1  \label{eq:rnmest}
\end{align}
for some $C_{1}>1$. Since $|R_{0,0}(a)|=|\tilde{c}_{0,0}(a)|\leqslant \hat{c}_1$, we have
by induction
\begin{equation}
|R_{n,m}(x)|\leqslant(C_{1}+\hat{c}_1)^{n+1}\label{eq:Rest}
\end{equation}
which implies by (\ref{eq:gestr})
\begin{equation}
\| g{}_{n,m}\| _{\infty}\leqslant\frac{\tilde{C}_{1}^{n+1}}{|n-2m|+\tilde{\nu}+1},\quad \| g{}_{n,m}'\| _{\infty}\leqslant\tilde{C}_{1}^{n+1}\label{eq:gest}
\end{equation}
for some $\tilde{C}_{1}>1$. Note that by \eqref{eq:gRd} $g_{n,m}'$ is differentiable, implying $R_{n,m}$ is continuous by \eqref{eq:iter}, and thus we know $g_{n,m}''$ is continuous by differentiating \eqref{eq:gRd}. To estimate $g{}_{n,m}''$, we rewrite
(\ref{eq:gRd}) using integration by parts as
\begin{align*}
(ag{}_{n,m}(a))' &=\frac{(1+a)^{\tilde{\nu}+(n-2m)i}}{2}\int_{0}^{a}x(1+x)^{-\tilde{\nu}-1-(n-2m)i}R_{n,m}(x)\, dx \\
& \!\!\!\!+ \frac{aR_{n,m}(a)}{2(\tilde{\nu}+(n-2m)i)}
-\frac{(1-a)^{\tilde{\nu}+(n-2m)i}}{2(\tilde{\nu}+(n-2m)i)}\int_{0}^{a}(1-x)^{-\tilde{\nu}-(n-2m)i}(xR_{n,m}(x))' \, dx
\end{align*}
which implies
\begin{multline}
|2(ag{}_{n,m}(a))''|\leqslant\left|(\tilde{\nu}+(n-2m)i)(1+a)^{\tilde{\nu}-1+(n-2m)i}\int_{0}^{a}x(1+x)^{-\tilde{\nu}-1-(n-2m)i}R_{n,m}(x)dx\right|\\
+(1+a)^{-1}a|R_{n,m}(a)|+\left|(1-a)^{\tilde{\nu}-1+(n-2m)i}\int_{0}^{a}(1-x)^{-\tilde{\nu}-(n-2m)i}(xR_{n,m}(x))'dx\right|\\
\leqslant a(1+a)^{-1}(\tilde{\nu}^{-1}2^{\tilde{\nu}}(\tilde{\nu}+|n-2m|)+1)\| R_{n,m}\| _{\infty}\\
+(\tilde{\nu}-1)^{-1}(1-(1-a)^{\tilde{\nu}-1})\| (xR_{n,m}(x))'\| _{\infty}\label{eq:agad2}
\end{multline}
This together with (\ref{eq:gestr}) and (\ref{eq:Rest}) implies
\begin{align}
|(aR_{n,m}(a))'| & \leqslant 2(|(ag{}_{n-1,m}(a))''|+|(ag{}_{n-1,m-1}(a))''|) \nn \\ &  \qquad + 2(\tilde{\nu}+1+|n-2m|)(|(ag{}_{n-1,m}(a))'|+|(ag{}_{n-1,m-1}(a))'|)  \nn \\
& \qquad +4\max_{0\leqslant j\leqslant2}|(ag{}_{n-2,m-j}(a))'|+2\hat{c}_1 \nn \\
& \leqslant C_{2}\Big((\tilde{\nu}+1+|n-2m|)(C_{1}+\hat{c}_1)^{n} \nn \\
&\qquad +\max_{0\leqslant k\leqslant1}\| (xR_{n-1,m-k}(x))'\| _{\infty}\Big)+2\hat{c}_1\label{eq:aRad}
\end{align}
for some $C_{2}>1$. In particular $|(aR_{0,0}(a))'|\leqslant2\hat{c}_1$.
Thus we have by induction
\begin{equation}\label{eq:aRad2}
\| (xR_{n,m}(x))'\| _{\infty}\leqslant(\tilde{\nu}+2+n)\tilde{C}_{2}^{n+1}
\end{equation}
where $\tilde{C}_{2}=2(C_{2}+\hat{c}_1)(C_{1}+\hat{c}_1)$.
Therefore, by (\ref{eq:Rest}), (\ref{eq:agad2}), and (\ref{eq:aRad2})
\begin{equation}\label{eq:agdd}
|(ag{}_{n,m}(a))''|\leqslant a\hat{C}_2^{n+1}
\end{equation}
for some $\hat{C}_2>1$, where we used the fact that in (\ref{eq:agad2}) we have $0\leqslant1-(1-a)^{\tilde{\nu}-1}\lesssim a$.
 Now, integrating the estimate for $(ag{}_{n,m}(a))'$ in \eqref{eq:gestr} we get
\begin{align}
|g{}_{n,m}(a)| &\leqslant 3^{-1}\sqrt{2}(2^{\tilde{\nu}-2}+2^{-1})a^2\|R_{n,m}\|_{\infty}
\label{eq:gestr2}
\end{align}
which together with \eqref{eq:gestr} and (\ref{eq:Rest}) implies
\begin{align}
|g'_{n,m}(a)| &\leqslant a^{-1}(|(ag{}_{n,m}(a))'|+|g{}_{n,m}(a)|)\nn \\
& \leqslant\sqrt{2}(2^{\tilde{\nu}-1}+2^{-1})(1+3^{-1})a(C_{1}+\hat{c}_1)^{n+1}
\label{eq:gestr3}
\end{align}
By  \eqref{eq:agdd} and \eqref{eq:gestr3} we have for some $C_{0}>\tilde{C}_{1}$
\begin{equation}
|g{}_{n,m}''(a)|\leqslant a^{-1}(|(ag{}_{n,m}(a))''|+2|g{}_{n,m}'(a)|)\leqslant C_{0}^{n+1}\label{eq:gd2est}
\end{equation}
By  (\ref{eq:gest}) and (\ref{eq:gd2est}) we have $\| g_{n,m}^{(k)}(a)\| _{\infty}\leqslant C_{0}^{n}$
for $0\leqslant k\leqslant2$. Since $R_{n,m}$ is continuous (cf.~the discussion below \eqref{eq:gest}), writing  $R_{n,m}(x)=R_{n,m}(0)+o(1)$ and expanding \eqref{eq:gR} at $a=0$ we get $g_{n,m}(a)=o(a)$, implying $g_{n,m}(0)=g_{n,m}'(0)=0$.

\noindent In addition, for $q_1$ we have $\tilde{c}_{n,m}(a)=\tilde{c}_{n,m}a$. By \eqref{eq:gest} we have
$|g_{n,m}'(a)|\leqslant \tilde{C}_{1}^{n+1}$ and $|g_{n,m}(a)|\leqslant a\tilde{C}_{1}^{n+1}$ and thus by definition (cf.~\eqref{eq:iter})
 $$|R_{n,m}(a)|\leqslant aC_3^{n+1} $$
for some $C_3>0$. By \eqref{eq:iter} we have
\begin{align*}
&|(ag_{n,m}(a))''| =(1-a^2)^{-1}|2a(\nu+(n-2m)i)(ag_{n,m}(a))'\\
& +(-\tilde{\nu}-\tilde{\nu}^{2}+(n-2m)(n-2m-i-2\tilde{\nu}i))ag{}_{n,m}(a)-aR_{n,m}(a)|
\leqslant a^2\tilde{C}_3^{n+1}
\end{align*}
for some $\tilde{C}_3>0$ as long as $0\leqslant a\leqslant 1/2$, which implies \eqref{gnmd2} by direct calculation.
Therefore, $q(a,t)$ given by \eqref{eq:w0exp} is a solution to \eqref{reducedw} satisfying the stated conditions (see the discussion below \eqref{eq:gRd}).
\end{proof}

\begin{rem}
One can modify the proof of Lemma \ref{lem1} so that the results hold for $\la(t)=t^{-1-\nu}F_a( \sin(\log t), \cos(\log t))$ where $F_a(u,v)$ is analytic in $u$ and $v$ at the origin with sufficiently small derivatives. In this case, estimates of the type \eqref{eq:rnmest} remain valid.

Similarly, the results of Lemma \ref{lem1} hold for $\la(t)=t^{-1-\nu}F_b(t^{\gamma})$ where $F_b$ is analytic at the origin with sufficiently small derivatives, and ${\gamma}\in \mathbb{R}^+$. In this case, instead of \eqref{eq:w0exp} one considers
\begin{equation}
q(a,t)=\sum_{n=0}^{\infty}g_{n}(a)t^{n\gamma}
\end{equation}
and the rest of the proof is similar to that of Lemma \ref{lem1}.
\end{rem}

Using  $a=R\mu^{-1}$ we may rewrite~\eqref{v2} in the form
\EQ{\label{v2 1}
v_2(t,r):= \frac{\la(t)^{\f12}}{\mu^2(t)} \big( R \tilde q_1(a,t)+ q_2(a,t))
}
where we have set $\tilde q_1(a,t):= a^{-1} q_1(a,t)$. Note that both $\ti q_1$ and $q_2$ are~$O(a^2)$ as $a\to0$. Thus by Lemma \ref{lem1} we have the estimate
\begin{equation}\label{v2est}
\partial_t^k\partial_a^jv_2(t,at)=O\left( t^{-k}a^{-j} \frac{\la(t)^{\f12}a^{2}(1+R)}{\mu^2(t)}\right)
\end{equation}
for $k\geqslant 0$ and $0\leqslant j\leqslant 2$.
Furthermore we have
 \begin{lem}\label{lem2} For $0\leqslant r\leqslant t/2$ the estimate \eqref{v2est} holds for $k, j\geqslant 0$, or equivalently,
$$ |\partial_t^k\partial_r^\ell v_2(t,r)|\leqslant C_{k,\ell} t^{-k}r^{-\ell}
\frac{\la(t)^{\f12}a^2(1+R)}{\mu^2(t)}$$ for all $k,\ell\geqslant 0$.
 \end{lem}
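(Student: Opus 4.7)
\textbf{Plan for the proof of Lemma \ref{lem2}.}  The restriction $0\le r\le t/2$ translates into $a=r/t\in[0,1/2]$, i.e.\ we are uniformly bounded away from the singular point $a=1$ of the ODE \eqref{reducedw}.  Lemma~\ref{lem1} already gives the required bounds for $0\le\ell\le 2$; the task is only to promote those to all $\ell\ge 0$ (with the corresponding $\partial_t^k$-estimates) by bootstrapping off the ODE itself, which is non-degenerate on $[0,1/2]$.

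Concretely, I would solve \eqref{reducedw} for $\partial_a^2 q_j$ and then differentiate the resulting expression. Since $(1-a^2)\ge 3/4$ on $[0,1/2]$, dividing gives
\[
\partial_a^2 q_j(a,t)=(1-a^2)^{-1}\!\Big[c_j(a,t)+\cdots\Big],
\]
where the dots are a linear combination of $\partial_a q_j$, $q_j$, $t\partial_t q_j$ and $t^2\partial_t^2 q_j$ (see the first line of \eqref{w1w0}) with coefficients that are polynomial in $a$ and admissible in $t$.  Applying $\partial_a$ iteratively to this identity, each step expresses $\partial_a^{\ell+2} q_j$ as a combination of $\partial_a^{\ell'} q_j$ and $\partial_a^{\ell'}\partial_t^{k'} q_j$ with $\ell'\le \ell+1$ and $k'\le 2$, together with $a$-derivatives of $c_j(a,t)$ (which is linear in $a$, so only the first two are nontrivial), and each step leaves the $t$-dependence of the coefficients admissible.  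An induction in $\ell$, seeded by the $0\le\ell\le 2$ estimates of Lemma \ref{lem1} and applied on top of an outer induction in $k$ (the $t$-derivative count), then yields
\[
|\partial_t^k\partial_a^\ell q_1(a,t)|\le C_{k,\ell}\, t^{-k},\qquad |\partial_t^k\partial_a^\ell q_2(a,t)|\le C_{k,\ell}\, t^{-k}\qquad(a\in[0,1/2]).
\]
For $\ell\ge 3$ (resp.\ $\ge 2$) the factor $a^{3-\ell}$ (resp.\ $a^{2-\ell}$) is bounded below by $2^{\ell-3}$ (resp.\ $2^{\ell-2}$) on $(0,1/2]$, so the $C^\infty$-boundedness just obtained absorbs the claimed $O(t^{-k}a^{3-\ell})$ and $O(t^{-k}a^{2-\ell})$ bounds; for the smaller values of $\ell$ the sharper vanishing is already supplied by Lemma \ref{lem1}.

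To conclude, I rewrite \eqref{v2 1} using $R=a\mu$ and $R\tilde q_1=\mu q_1$ as
\[
v_2(t,r)=\frac{\la^{1/2}(t)}{\mu(t)}\,q_1(a,t)+\frac{\la^{1/2}(t)}{\mu^2(t)}\,q_2(a,t),
\]
and apply the chain rule $\partial_r=t^{-1}\partial_a$ (with $t$ held fixed) together with a Leibniz expansion for $\partial_t^k$, which expresses $\partial_t^k\partial_r^\ell v_2$ as a combination of $t^{-\ell}\partial_t^{k'}\partial_a^{\ell'} q_j$ multiplied by admissible functions of~$t$, each of which inherits the factor $t^{-k+k'}$ from differentiating $\la^{1/2}/\mu^{1,2}$.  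Inserting the bounds of the previous step gives
\[
|\partial_t^k\partial_r^\ell v_2(t,r)|\le C_{k,\ell}\, t^{-k-\ell}\,\frac{\la^{1/2}(t)\,a^{2-\ell}(1+R)}{\mu^2(t)},
\]
and since $t^{-\ell}a^{2-\ell}=r^{-\ell}a^2$, this is exactly the claimed estimate.

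The main obstacle is the clean bookkeeping of the double induction on $(k,\ell)$: one has to verify that differentiating the ODE a further time never forces extra $\partial_t^k$-derivatives beyond those already bounded in Lemma \ref{lem1}, and that the $t^{-k}$ factor is preserved under repeated use of $\partial_t$ acting on the admissible coefficients of \eqref{reducedw}.  The $a$-regularity is essentially automatic because the ODE has analytic coefficients on $[0,1/2]$, but coupling $\partial_t$ and $\partial_a$ derivatives through the auxiliary terms $t^2\ddot q_j$ and $2at\partial_a\dot q_j$ requires the inductive scheme to be arranged so that the $t$-derivative order never exceeds what has already been controlled at a lower $a$-derivative order.
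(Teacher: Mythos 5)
Your overall strategy — convert $(\partial_t,\partial_r)$ to $(\partial_t,\partial_a)$, use Lemma \ref{lem1} for $\ell\le 2$, and bootstrap off the ODE \eqref{reducedw} for higher $a$-derivatives — is indeed the paper's strategy, and your final reduction from $v_2$ to the $q_j$'s is sound. But the core bootstrap step as you describe it has a genuine gap at $a=0$, and the claim that ``the ODE has analytic coefficients on $[0,1/2]$'' is false: \eqref{reducedw} contains the term $2a^{-1}\partial_a q$, so the coefficients have a pole at $a=0$.

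Concretely, when you solve \eqref{reducedw} for $\partial_a^2 q_j$ and differentiate once in $a$, the term $2a^{-1}\partial_a q_j$ produces $-2a^{-2}\partial_a q_j + 2a^{-1}\partial_a^2 q_j$. From Lemma \ref{lem1} one only knows $\partial_a q_2 = O(a)$ and $\partial_a^2 q_2 = O(1)$, so each of the two new terms is individually $O(a^{-1})$, and their sum is bounded only because of a cancellation of the leading singular coefficients — a cancellation that is \emph{not} visible to a term-by-term estimate. Iterating $\partial_a$ makes this worse: $\partial_a^{\ell}$ applied to $a^{-1}\partial_a q_j$ produces $a^{-1-\ell}\partial_a q_j, a^{-\ell}\partial_a^2 q_j,\dots$, which are all singular and require the same hidden cancellations. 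So ``applying $\partial_a$ iteratively and bounding each term'' does not close, and the induction you sketch cannot be seeded or propagated uniformly down to $a=0$.

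The paper avoids this precisely by first multiplying the equation by $a$, yielding \eqref{higherd} for $aq_j$ instead of $q_j$: in that form the $a^{-1}$ cancels against the $a\partial_a^2 q$ term (since $a\cdot 2a^{-1}\partial_a q=2\partial_a q$ and $2\partial_a q + a\partial_a^2 q = \partial_a^2(aq)$), and \emph{all} remaining coefficients are polynomial in $a$. Differentiating \eqref{higherd} in $a$ is then harmless, and the paper's induction gives $\partial_t^k\partial_a^{\ell'}(aq_j)=O(t^{-k})$ for all $\ell'$. To pass from bounds on $aq_j$ back to $q_j$ and $\tilde q_1=a^{-1}q_1$, they integrate the highest-order bound (equation \eqref{intaq}), which records exactly the Taylor-expansion structure of $aq_j$ at $a=0$ needed to divide by $a$ or $a^2$ safely. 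That step is the essential ingredient your proposal is missing. If you want to retain your representation $v_2=\frac{\lambda^{1/2}}{\mu}q_1 + \frac{\lambda^{1/2}}{\mu^2}q_2$, you can still deduce the desired $O(t^{-k})$ bounds on $\partial_t^k\partial_a^\ell q_j$ from the paper's bounds on $aq_j$, but the bootstrap must be run on $aq_j$, not on $q_j$ directly.

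Two smaller points: (i) the chain-rule step at the end is stated too loosely — $\partial_t$ acting on $q_j(r/t,t)$ generates both $\partial_t q_j$ and $(a/t)\partial_a q_j$, so $\partial_t^k\partial_r^\ell v_2$ involves $\partial_a^{\ell'}$ with $\ell'$ up to $\ell+k$, not just $\ell'=\ell$; the paper's \eqref{cov1} records this correctly. (ii) In the iteration you write ``$k'\le 2$'', but after the outer induction in $k$ one will encounter up to $\partial_t^{k+2}$ of $q_j$; this is handled by the induction, but the statement as written is misleading.
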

\begin{proof} Note that for any differentiable function $f$ we have
$r \partial_rf(t,r)=a\partial_af(t,at)$ and $t\partial_tf(t,r)=(t\partial_t-a\partial_a) f(t,at)$. This implies
\begin{equation}\label{cov1}|t^{k}r^{\ell}\partial_t^k\partial_r^\ell v_2(t,r)|\leqslant
\tilde{C}_{k,\ell} \max_{0\leqslant m\leqslant k}t^{k-m} a^{\ell+m} |\partial_t^{k-m}\partial_a^{\ell+m} v_2(t,at)|\end{equation}
Thus it is sufficient to show that
\begin{equation}\label{pkpa0}
t^{k} a^{\ell} \partial_t^{k}\partial_a^{\ell} v_2(t,at) =O\left( \frac{\la(t)^{\f12}a^{2}(1+R)}{\mu^2(t)}\right)
\end{equation}
for all $k, \ell\geqslant 0$.

 For $\ell\leqslant 2$ this follows from \eqref{v2est}.  For $\ell>2$, we only need to show
 $$ \partial_t^{k}\partial_a^{\ell} v_2(t,at) = O\left(a^{-1}t^{-k} \frac{\la(t)^{\f12}(1+R)}{\mu^2(t)}\right)$$
By \eqref{v2 1} it is sufficient to show that
\begin{equation}\label{pkpa}
\partial_t^k\partial_a^\ell \tilde{q}_1(a,t)=O( t^{-k}) ; \ \ \partial_t^k\partial_a^\ell q_2(a,t)=O( t^{-k})
\end{equation}
since $\partial_a R=a^{-1}R$ and $\partial_a^j R=0$ for $j\geqslant 2$.
By \eqref{w1w0} we have for $j=1,2$
\EQ{\label{higherd}
 (1-a^2)\p_a^2 (aq_j(a,t))=-(2\beta_j(t)a\p_a - \beta_j^2(t) -\beta_j(t) -t \dot\beta_j(t) \big) a q_j(a,t)
\\
\qquad   + t^2a\ddot  q_j(a,t)+2(\beta_j(t)+1) t a\dot q_j(a,t) -2 at\p_a(a\dot q_j(a,t))+a^{3-j}c_j(t)
 }
Note that $\partial_t^k\beta_j(t)=O(t^{-k})$, $\partial_t^kc_j(t)=O(t^{-k})$ (cf. \eqref{eq:om024}), and by Lemma \ref{lem1} we have
\EQ{\partial_t^k\partial_a^{\ell'}(aq_j(a,t))=O( t^{-k}); \ \ 0\leqslant \ell'\leqslant 2}
Thus by differentiating \eqref{higherd} and using induction on $\ell'$ we obtain
\EQ{\label{higherd1} \partial_t^k\partial_a^{\ell'}(aq_j(a,t))=O(t^{-k}); \ \ \ell'>2}
Recall that $\partial_t^kq_j(a,t)=O(a^{4-j}t^{-k})$ by Lemma \ref{lem1}. Thus by integrating \eqref{higherd1} with $\ell'=\ell+2$ we have
\EQ{\label{intaq}a\partial_t^kq_j(a,t)=\sum_{n=5-j}^{\ell+1}c_{j,n}^{(k)}(t)a^n+\hat{q}_j^{(k)}(a,t)}
 where $c_{j,n}^{(k)}=O(t^{-k})$ and $\partial_a^m\hat{q}_j^{(k)}(a,t)=O(t^{-k}a^{\ell+2-m})$ for $ 0\leqslant m\leqslant \ell+2$. Thus by differentiating \eqref{intaq} we conclude that
 $$|\partial_t^k\partial_a^\ell \tilde{q}_1(a,t)|\leqslant | \partial_a^\ell(a^{-2}\hat{q}_1^{(k)}(a,t))|\lesssim t^{-k}$$
  $$|\partial_t^k\partial_a^\ell q_2(a,t)|\leqslant \ell!|c_{2,\ell+1}^{(k)}(t)|+| \partial_a^\ell(a^{-1}\hat{q}_2^{(k)}(a,t))|
  \lesssim t^{-k}$$
i.e., \eqref{pkpa} holds.
\end{proof}

 We set  $u_2:= u_1+v_2=u_0+v_1+v_2$. Finally, \eqref{u1 R} remains valid for~$u_2$ as well since $R\leqslant \mu(t)$. In other words, $u_0$
 gives the main shape of the profile as a  function of~$R$.
 \subsection{The error from $u_2$}\label{eu2}
 We define
 \EQ{\label{e2}
 e_2 &:= \cL_{quintic} (u_2) = \cL_{quintic} (u_1+ v_2) \\
 &=  \cL_{quintic} (u_1)  + u_1^5 - (u_1+v_2)^5 +(  \p_{tt} -\p_{rr} - \f{2}{r}\p_r )v_2\\
 &= e_1 - e_1^0 - 5u_1^4 v_2 - 10 u_1^3 v_2^2 - 10 u_1^2 v_2^3 - 5 u_1 v_2^4 - v_2^5
 }
 We determine $t^2 \la(t)^{-\f12} e_2$. First, from~\eqref{ediff}
 \EQ{ \label{ediff 2}
&t^2 \la^{-\f12}(t) ( e_1- e_1^0)(t,r) \\
& = \mu^{-2}(t) \Big[ \frac{\log R}{R} \Phi_1(a,b,\rho\log\rho,\rho) + \frac{1}{R} \Phi_2(a,b,\rho\log\rho,\rho)\Big]
}
for $R\ge1$. For $|R|<1$ we read off from~\eqref{e1*} and~\eqref{e10} that
 \EQ{ \label{ediff 3}
&t^2 \la^{-\f12}(t) ( e_1- e_1^0)(t,r)  = O(\mu^{-2}(t))
}
This holds uniformly for small times, and $t\p_t$ and $r\p_r$ can be applied any number of times without changing this asymptotic behavior as $R\to0$.

Next, for large~$R$,  by \eqref{u1 R} and \eqref{v2est} we have

\[
t^2 \la^{-\f12}(t) u_1^4 v_2 =O(a^2R^{-3}) = O(R^{-1} \mu^{-2}(t))
\]
The final nonlinear term contributes
\begin{align*}
t^2 \la^{-\f12}(t)  v_2^5 &= \mu^{-8}(t) O(R^5) = \mu^{-2}(t) R^{-1} O(\mu^{-6}(t) R^6) = O(R^{-1} \mu^{-2}(t))
\end{align*}
Thus
\EQ {\label{nonli1}
t^2 \la^{-\f12}(t) u_1^k v_2^{5-k} = O(R^{-1} \mu^{-2}(t))\qquad (R\geqslant 1,0\leqslant k\leqslant 4)
}

  For small~$R$ we have $u_1=\la^{\f12}(t)O(1)$ by \eqref{u1 R} and $v_2=\la^{\f12}(t)  \mu^{-2}(t)O(a^2)$ by \eqref{v2est}. Thus (recall that $a=R\mu^{-1}(t)$)
 $$t^2 \la^{-\f12}(t) u_1^4 v_2 =O(a^2)=O(\mu^{-2}(t))$$
  $$t^2 \la^{-\f12}(t) u_1 v_2^4 =O (a^8\mu^{-6}(t))=O(\mu^{-2}(t))$$
and we have
  \EQ {\label{nonli2}
t^2\la^{-\f12}(t) u_1^k v_2^{5-k} =O(\mu(t)^{-2}) \qquad  (R\leqslant 1,0\leqslant k\leqslant 4)
}
By the preceding we gain a factor $\mu^{-2}$
for all~$R$, and the decay is at least $\frac{\log R}{R} $ as~$R\to\I$.

Finally by \eqref{u1 R} and Lemma \ref{lem2} we see that the estimates \eqref{nonli1} and \eqref{nonli2} remain valid after one applies $t\partial_t$ or $r\partial_r$ any number of times if $0\leqslant r\leqslant t/2$. Similarly by \eqref{u1 R} and \eqref{v2est} we see that \eqref{nonli1} and \eqref{nonli2} remain valid after one applies $t\partial_t$ and $r\partial_r$ no more than twice, if $0\leqslant r\leqslant t$.

\subsection{Proof of Proposition \ref{prop1}} \label{prop1proof}
(i) (ii) Smoothness of $u_2$ follows from \eqref{u0def}, \eqref{v1w1} (where $f_j$ satisfies \eqref{fasymp}, which can be differentiated), and Lemma \ref{lem1}, which imply $u_0,v_1,v_2$ are all in $C^2(\{0<t<t_0, 0\leqslant r\leqslant t\})$.

To show \eqref{u2bd}, it is sufficient to show that $v_{1,2}=\sqrt{\la(t)}\,\mu^{-2}(t)O(R)$ for both small and large $R$. This follows from \eqref{fasymp}  and \eqref{v1w1} for $v_1$, and \eqref{v2est} for $v_2$. Similarly \eqref{u2der} follows from the fact that $\p_t^k\p_r^jv_{1,2}=t^{-k}r^{-j}\sqrt{\la(t)}\,\mu^{-2}(t)O(R)$ by \eqref{fasymp} (which is clearly differentiable in $R$), \eqref{v1w1} and Lemma \ref{lem2}. For $k+j\leqslant 2$ and $0\leqslant r\leqslant t$ we use \eqref{v2est} instead of Lemma \ref{lem2}.

Finally \eqref{e2bd} and \eqref{e2der} follow from \eqref{e2}, where the different parts are estimated in \eqref{ediff 2}, \eqref{ediff 3}, \eqref{nonli1}, and \eqref{nonli2}, which remain valid after one applies $t\partial_t$ or $r\partial_r$ any number of times if $0\leqslant r\leqslant t/2$, or if they are  applied no more than twice and $0\leqslant r\leqslant t$.

(iii) We let$$\hat{u}_2(t,r)=\begin{cases} u_2(t,r), & \mbox{if } 0<t<t_0, 0\leqslant r\leqslant t
 \\ u_2(t,t)+(r-t) u_{2}^{(0,1)}(t,t)\\+\frac{1}{2} (r-t)^2u_{2}^{(0,2)}(t,t), & \mbox{if } 0<t<t_0, t< r\leqslant (1+2b_1)t \end{cases}$$
where $b_1>0$ and $u_{2}^{(n,m)}(t,r):=\p_r^m\p_t^n u_2(t,r)$. Clearly $\hat{u}_2$ is $C^2$ in $r$ for fixed $t$.

By direct calculation using \eqref{u2der} we have for $0\leqslant k+j\leqslant 2$
$$u_{2}^{(k,j)}(t,r)=O(r^{-j-1}t^{-k}\la^{-1/2}(t))$$
Thus for $t< r\leqslant (1+2b_1)t$ we have
\begin{equation}\label{uhat1}
\hat{u}_2(t,r)=O(t^{-1}\la^{-1/2}(t)) ; \ \ \p_r\hat{u}_2(t,r)=O(t^{-2}\la^{-1/2}(t))
\end{equation}
 $$\p_t\hat{u}_2(t,r)=O\left(\max_{0\leqslant m\leqslant 1, 0\leqslant n\leqslant 1}\left|(r-t)^n u_{2}^{(1-m,n+m)}(t,t)\right|+\left|(r-t)^2\p_tu_{2}^{(0,2)}(t,t)\right|
 \right)$$
where the first term is clearly of order $O(t^{-2}\la^{-1/2}(t))$. To estimate the second term, recall that $u_2=u_1+v_2$ where by direct calculation
$u_1^{(k,j)}(t,r)=O(r^{-j-1}t^{-k}\la^{-1/2}(t))$ for all $k,j\geqslant 0$. Thus
$$(r-t)^2\p_tu_{1}^{(0,2)}(t,t)=O(t^{-2}\la^{-1/2}(t))$$
Since $a=r/t$, we have
$$v_{2}^{(0,2)}(t,t)=t^{-2}\partial_a^{2} v_2(t,t)$$
This and \eqref{v2est} imply

$$\partial_t v_{2}^{(0,2)}(t,t)=O(t^{-4}\la^{-1/2}(t))$$
Therefore
\begin{equation}\label{uhat2}\p_t\hat{u}_2(t,r)=O(t^{-2}\la^{-1/2}(t))\end{equation}

Now we let $B_1$ be a smooth bump function satisfying $$B_1(x)=
\begin{cases} 1, & \mbox{if } x<1
 \\ 0, & \mbox{if } x>1+b_1 \end{cases}$$
and we let $u_2(t,r)=\hat{u}_2(t,r)B_1(r/t)$ for $r>t$. Clearly $u_2$ is $C^2$ in $r$ for fixed $t$. By direct calculation using \eqref{uhat1} and \eqref{uhat2} we have
$$u_2(t,r)=O(t^{-1}\la^{-1/2}(t)); \ \p_ru_2(t,r)=O(t^{-2}\la^{-1/2}(t)); \ \p_tu_2(t,r)=O(t^{-2}\la^{-1/2}(t))$$
Therefore $b_1$ can be chosen small enough to ensure
 $$\int_{t\leqslant r}u_2^6\,dx\lesssim  b_1 t^{-3}\la^{-3}(t)< \delta/3$$
 $$\int_{t\leqslant r}(\p_ru_2)^2\, dx\lesssim b_1 t^{-1}\la^{-1}(t)< \delta/3$$
 $$\int_{t\leqslant r}(\p_tu_2)^2\, dx\lesssim b_1 t^{-1}\la^{-1}(t)< \delta/3$$
Thus their sum is less than $\delta$.

\section{Construction of an exact solution}

Our aim next is to construct an energy class solution of \eqref{u5} of the form
\[
u = u_2 + \eps
\]
in the backward light cone $r\leq t$, $0<t<t_0$.
One immediately infers the equation
\begin{equation}\label{eqn:eps}
\Box \eps + 5u_0^4\eps = 5(u_0^4 - u_2^4)\eps - N(u_2,\eps) - e_2,
\end{equation}
where we have
\[
N(u_2,\eps) = 10 u_2^3\eps^2 + 10u_2^2\eps^3 + 5u_2\eps^4 + \eps^5;
\]
also, we shall denote by $e_2$ an extension of $e_2$ in the preceding section beyond the light cone satisfying the same asymptotics as in Proposition~\ref{prop1}.
Proceeding exactly as in \cite{DonKri12}, we pass to the variables
\[
\tau = \int_{t_0}^t\lambda(s)\,ds,\,R = \lambda(t)r,\,v(\tau, R) = R\eps(t(\tau), r(\tau, R))
\]
Writing \footnote{We warn the reader that the symbols $\beta$ and $\kappa$ have a different
meaning here than in Section \ref{sec:approx}.} $\kappa(\tau): = \lambda(t(\tau))$, as well as
$\beta(\tau): = \frac{\kappa'(\tau)}{\kappa(\tau)}$, and
\[
\mathcal{D}: = \partial_{\tau} + \beta(\tau)(R\partial_R - 1),
\]
we get
\begin{equation}\label{eq:veqn1}
[\mathcal{D}^2 + \beta(\tau)\mathcal{D} + \mathcal{L}]v = \kappa^{-2}(\tau)\big[5(u_0^4 - u_2^4)v + RN(u_2, \frac{v}{R}) + Re_2\big]
\end{equation}
where $\mathcal{L}: = -\partial_R^2 - 5W^4(R)$, and we interpret $u_2, u_0, e_2$ as functions of $\tau, R$. In fact, since it suffices to solve this problem in a dilate of the light cone, we replace it by
\begin{equation}\label{eq:veqn2}
[\mathcal{D}^2 + \beta(\tau)\mathcal{D} + \mathcal{L}]v = \kappa^{-2}(\tau)\tilde{\chi}(\frac{R}{\nu\tau})\big[5(u_0^4 - u_2^4)v + RN(u_2, \frac{v}{R}) + Re_2\big]
\end{equation}
for some smooth cutoff $\tilde{\chi}\in C_0^\infty(\R_+)$ with $\tilde{\chi}|_{r\leq 1} = 1$. In fact, the main work consists in solving the linear in-homogeneous problem
\[
[\mathcal{D}^2 + \beta(\tau)\mathcal{D} + \mathcal{L}]v = f,
\]
where $f$ satisfies bounds like $ \kappa^{-2}(\tau)\tilde{\chi}(\frac{R}{\nu\tau})Re_2$. Our approach below is a general framework to solve such problems, applicable to much wider classes of scaling factors $\lambda(t)$.

\subsection{The distorted Fourier transformation}

Here we freely borrow facts from \cite{KST} as well as \cite{DonKri12}, \cite{KrSch}. We state
\begin{thm}[Spectral theory for $\mathcal{L}$]\hfill
\label{thm:spec}
\begin{itemize}
\item The Schr\"odinger operator $\mathcal{L}$ is self-adjoint on $L^2(0,\infty)$ with domain
\begin{align*}
\mathrm{dom}(\mathcal{L})=\{&f\in L^2(0,\infty): f,f' \in \mathrm{AC}[0,R]\:\forall R>0, \\
&f(0)=0, \mathcal{L} f \in L^2(0,\infty)\}
\end{align*}
and its spectrum is given by $\sigma(\mathcal{L})=\{\xi_d\}\cup [0,\infty)$ where $\xi_d<0$.
The continuous part of the spectrum is absolutely continuous and
the eigenfunction $\phi(R,\xi_d)$
associated to the eigenvalue $\xi_d$ is smooth
and decays exponentially as $R\to\infty$.

\item The spectral measure $\mu$ is of the form
\[ d\mu(\xi)=\frac{\delta_{\xi_d}(\xi)}{\|\phi(\cdot,\xi_d)\|_{L^2(0,\infty)}^2}+\rho(\xi)d\xi \]
where $\delta_{\xi_d}$ denotes the Dirac measure centered at $\xi_d$ and
the function $\rho$ satisfies $\rho(\xi)=0$ for $\xi<0$ as well as
\footnote{The conclusion for the asymptotics near $\xi = 0$ is
not optimal and one can replace $O(\xi^\frac15)$ by $O(\xi^{\frac12})$, but we will not need this.}
\begin{align*}
\rho(\xi)&=\tfrac{1}{3\pi}\xi^{-\frac12}[1+O(\xi^\frac15)]\quad 0<\xi\leq 1 \\
\rho(\xi)&=\tfrac{1}{\pi}\xi^\frac12[1+O(\xi^{-\frac12})]\quad \xi\geq 1
\end{align*}
where the $O$-terms behave like symbols under differentiation.

\item There exists a unitary operator $\mathcal{U}: L^2(0,\infty)\to L^2(\sigma(\mathcal{L}),d\mu)$ which
diagonalizes $\mathcal{L}$, i.e., $\mathcal{U} \mathcal{L} f(\xi)=\xi \mathcal{U} f(\xi)$ for all $f\in \mathrm{dom}(\mathcal{L})$.
The operator $\mathcal{U}$ is given explicitly by
\[ \mathcal{U} f(\xi)=\lim_{b\to\infty}\int_0^b \phi(R,\xi)f(R)dR \]
where the limit is understood in $L^2(\sigma(\mathcal{L}),d\mu)$.
The function $\phi(\cdot,\xi)$ is smooth and (formally) satisfies
$\mathcal{L}\phi(\cdot,\xi)=\xi \phi(\cdot,\xi)$ as well as $\phi(0,\xi)=0$, $\phi'(0,\xi)=1$.

\item
For $0<\xi\lesssim 1$ we have the asymptotics
\begin{align*}
\phi(R,\xi)&=\phi_0(R)[1+O(\langle R \rangle^2 \xi)]\quad 0\leq R \leq \xi^{-\frac12} \\
\phi(R,\xi)&=\tfrac{\sqrt{3}}{2}e^{i\sqrt{\xi}R}[1+O(\xi^\frac15)+O(\langle R\rangle^{-3}\xi^{-\frac12})] \\
&\quad+\tfrac{\sqrt{3}}{2}e^{-i\sqrt{\xi}R}[1+O(\xi^\frac15)+O(\langle R\rangle^{-3}\xi^{-\frac12})]\quad R\geq \xi^{-\frac16}
\end{align*}
where all $O$-terms behave like symbols under differentiation and
\[ \phi_0(R):=\frac{R(1-\frac13 R^2)}{(1+\frac13 R^2)^{3/2}}. \]
In the case $\xi \gtrsim 1$ we have
\begin{align*}
\phi(R,\xi)&=\tfrac{1}{2i}\xi^{-\frac12}e^{i\sqrt{\xi}R}[1+O(\xi^{-\frac12})+O(\langle R\rangle^{-3}\xi^{-\frac12})] \\
&\quad-\tfrac{1}{2i}\xi^{-\frac12}e^{-i\sqrt{\xi}R}[1+O(\xi^{-\frac12})+O(\langle R\rangle^{-3}\xi^{-\frac12})]
\end{align*}
for all $R\geq 0$ with symbol behavior of all $O$-terms.

\item The inverse map $\mathcal{U}^{-1}: L^2(\sigma(\mathcal{L}),d\mu)\to L^2(0,\infty)$ is given by
\[ \mathcal{U}^{-1}f(R)=\frac{\phi(R,\xi_d)}{\|\phi(\cdot,\xi_d)\|_{L^2(0,\infty)}^2}f(\xi_d)
+\lim_{b\to\infty}\int_0^b \phi(R,\xi)f(\xi)\rho(\xi)d\xi \]
where the limit is understood in $L^2(0,\infty)$.
\end{itemize}
\end{thm}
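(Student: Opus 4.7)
The strategy is classical Sturm--Liouville theory on the half-line, combined with Jost-function asymptotics tailored to the short-range potential $-5W^4(R) = -5(1+R^2/3)^{-2}$, which decays as $R^{-4}$ at infinity and is regular at $R=0$. Self-adjointness on the stated domain follows from the standard limit-point/limit-point classification: regular at $0$ with Dirichlet condition, limit-point at $\infty$. Weyl's theorem yields $\sigma_{\mathrm{ess}}(\mathcal{L}) = [0,\infty)$, and the $R^{-4}$ decay gives absolute continuity of the continuous spectrum via short-range trace-class scattering theory. Existence of exactly one negative eigenvalue $\xi_d$ follows from Sturm oscillation theory applied to the explicit bounded zero-energy solution $\phi_0(R)$: since $\phi_0$ has precisely one zero on $(0,\infty)$ (at $R=\sqrt{3}$), there is one and only one eigenvalue below zero, whose eigenfunction is nodeless and exponentially decaying by an Agmon estimate.

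\emph{Generalized eigenfunctions.} Define $\phi(R,\xi)$ by the initial conditions $\phi(0,\xi)=0$, $\phi'(0,\xi)=1$; analytic dependence on $\xi$ is automatic. In the small-$\xi$, moderate-$R$ regime $R \lesssim \xi^{-1/2}$, write the Volterra integral equation
\[
\phi(R,\xi) = \phi_0(R) + \xi\int_0^R G_0(R,R')\,\phi(R',\xi)\,dR'
\]
using the Green's function $G_0$ built from the fundamental pair in \eqref{tilde fundsys}; iteration produces the multiplicative bound $\phi(R,\xi) = \phi_0(R)[1+O(\langle R\rangle^2 \xi)]$, with symbol-type control obtained by differentiating the integral equation in $\xi$. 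In the radiation zone ($R \gtrsim \xi^{-1/6}$, or $\xi \gtrsim 1$), construct Jost solutions $f_\pm(R,\xi) = e^{\pm i\sqrt{\xi}R}[1+O(\langle R\rangle^{-3}\xi^{-1/2})]$ by Volterra iteration starting from infinity; convergence is ensured by the $R^{-4}$ decay of $W^4$. Writing $\phi = a_+(\xi) f_+ + a_-(\xi) f_-$, the large-$\xi$ asymptotics follow directly from the Dirichlet normalization at $R=0$, which forces $a_\pm(\xi) = \mp(2i\sqrt{\xi})^{-1} + $ lower order. The small-$\xi$ prefactor $\sqrt{3}/2$ is pinned down by matching the perturbed representation to the Jost representation at $R\sim \xi^{-1/2}$ and using the zero-energy limit $\phi_0(R)\to -\sqrt{3}$ as $R\to\infty$.

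\emph{Spectral measure and Plancherel.} By Weyl--Titchmarsh theory the absolutely continuous density is $\rho(\xi) = \frac{1}{\pi}|a(\xi)|^{-2}$ (up to a fixed normalization), where $a(\xi)$ is the connection coefficient relating $\phi(\cdot,\xi)$ to the outgoing Jost function; the asymptotics $\rho(\xi)\sim (3\pi)^{-1}\xi^{-1/2}$ for $\xi\to 0^+$ and $\rho(\xi)\sim \pi^{-1}\xi^{1/2}$ for $\xi\to\infty$ are then immediate consequences of the eigenfunction asymptotics established above, and the symbol behavior is preserved. The discrete point mass at $\xi_d$ with weight $\|\phi(\cdot,\xi_d)\|^{-2}$ is the standard isolated-eigenvalue contribution. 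Unitarity of $\mathcal{U}$ and the inversion formula are then the spectral theorem expressed in this distorted Fourier basis.

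\emph{Main obstacle.} The genuinely delicate step is obtaining $\phi(R,\xi)$ with \emph{symbol-type} control on every $O$-term uniformly across the transition region $R\sim \xi^{-1/2}$ for small $\xi$, and in particular pinning down the leading constant $\sqrt{3}/2$ by careful matching between the zero-energy perturbation regime and the oscillatory Jost regime. The paper accepts the suboptimal $O(\xi^{1/5})$ remainder precisely to simplify this matching; the sharper $O(\xi^{1/2})$ bound noted in the footnote would require a more refined WKB expansion to next order in the overlap region, but is not needed in the sequel.
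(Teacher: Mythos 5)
The paper does not prove Theorem~\ref{thm:spec}: it states it as a fact borrowed from \cite{KST} (see also \cite{DonKri12}, \cite{KrSch}), so there is no internal proof to compare against. That said, your sketch correctly identifies the classical Sturm--Liouville and Jost-function machinery that underlies those references: self-adjointness via the regular/limit-point endpoint classification with Dirichlet condition at $0$; $\sigma_{\mathrm{ess}}=[0,\infty)$ from Weyl's theorem for the short-range potential $-5W^4=O(R^{-4})$; absolute continuity of the continuous spectrum from short-range scattering; exactly one negative eigenvalue via Sturm oscillation applied to the zero-energy solution $\phi_0$, which has a single interior zero at $R=\sqrt 3$ and tends to a nonzero constant at infinity (so $0$ is a resonance, not an eigenvalue, and the $\xi^{-1/2}$ singularity of $\rho$ follows); and the spectral density via the Weyl--Titchmarsh formula applied to the Jost connection coefficient. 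Where your proposal necessarily remains a plan rather than a proof is exactly where you flag the ``main obstacle'': in \cite{KST} the uniform symbol-type control of the remainders across the transition region $R\sim\xi^{-1/2}$, the extraction of the precise constants $1/(3\pi)$, $1/\pi$, $\sqrt 3/2$, and the $O(\xi^{1/5})$ error estimate occupy a substantial technical effort, carried out through an explicit perturbative construction of the fundamental system with careful Wronskian bookkeeping rather than a one-line overlap matching. One small wrinkle worth tracking if you carry the matching through: since $\phi_0(R)\to-\sqrt 3$ as $R\to\infty$, the matching you describe naturally yields a leading coefficient $-\sqrt 3/2$ in the radiation zone rather than the $+\sqrt 3/2$ displayed in the theorem; the overall sign of $\phi$ is immaterial for $\rho$ and the Plancherel theory, but you should reconcile the conventions before claiming the stated constant.
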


For the following it turns out to be more convenient to use a vector-valued version of $\mathcal{U}$
which we denote by $\mathcal{F}$ and call the ``distorted Fourier transform''.
Thus, we identify $L^2(\sigma(\mathcal{L}),d\mu)$ with $\C \times L^2_\rho(0,\infty)$ and define the mapping
$\mathcal{F}: L^2(0,\infty)\to \C \times L^2_\rho(0,\infty)$ by
\[ \mathcal{F} f=\left (\begin{array}{c}\mathcal{U}f(\xi_d) \\
\mathcal{U} f|_{[0,\infty)} \end{array} \right ). \]
According to Theorem \ref{thm:spec}, the inverse map $\mathcal{F}^{-1}: \C \times L^2_\rho(0,\infty) \to L^2(0,\infty)$ is then given by
\[ \mathcal{F}^{-1} \left (\begin{array}{c} a \\ f \end{array} \right )=\frac{\phi(R,\xi_d)}{\|\phi(\cdot,\xi_d)\|_{L^2(0,\infty)}^2}a
+\lim_{b\to\infty}\int_0^b \phi(R,\xi)f(\xi)\rho(\xi)d\xi. \]

From now on we shall write
\begin{align*}
v(\tau, R) = x_d(\tau)\phi_d(R) + \int_0^\infty x(\tau, \xi)\phi(R, \xi)\rho(\xi)\,d\xi
\end{align*}
where the functions $x(\tau, \xi)$ are the (distorted) Fourier coefficients associated with $v(\tau, \cdot)$. We write
\[
\underline{x}(\tau, \xi): = \left(\begin{array}{c}x_d(\tau)\\ x(\tau, \xi)\end{array}\right) = \mathcal{F}(v)(\tau, \xi),\,\underline{\xi}: = \left(\begin{array}{c}\xi_d\\ \xi\end{array}\right)
\]
Then precisely as in \cite{KrSch}, we obtain the relation
\begin{equation}\label{eq:transport}
\big(\hat{\mathcal{D}}^2 + \beta(\tau)\hat{\mathcal{D}} +
\underline{\xi}\big)\underline{x}(\tau, \xi) = \mathcal{R}(\tau, \underline{x}) + f(\tau, \underline{\xi}),
\end{equation}
where we have
\begin{equation}\label{eq:Rterms}
\mathcal{R}(\tau, \underline{x})(\xi) =
\Big(-4\beta(\tau)\mathcal{K}\hat{\mathcal{D}}\underline{x}
- \beta^2(\tau)(\mathcal{K}^2 + [\mathcal{A}, \mathcal{K}] + \mathcal{K} +
\beta' \beta^{-2}\mathcal{K})\underline{x}\Big)(\xi)
\end{equation}
 with $\beta(\tau) = \frac{\dot{\kappa}(\tau)}{\kappa(\tau)}$, and
 \begin{equation}\label{eq:fterms}
 f(\tau, \xi) = \mathcal{F}\big( \kappa^{-2}(\tau)\big[5(u_{2k-1}^4 - u_0^4)v + RN(u_{2k-1}, v) + R e_{2}\big]\big)\big(\xi\big)
 \end{equation}
as well as the operator
 \[
 \hat{\mathcal{D}} =
 \partial_{\tau} + \beta(\tau)\mathcal{A},\quad \mathcal{A} = \left(\begin{array}{cc}0&0\\0&\mathcal{A}_c\end{array}\right)
 \]
 with
 \[
 \mathcal{A}_c = -2\xi\partial_{\xi} - \Big (\frac{5}{2}  + \frac{\rho'(\xi)\xi}{\rho(\xi)} \Big)
 \]
 Finally, we observe that the ``transference operator'' $\mathcal{K}$ is given by the following type of expression
 \begin{equation}\label{eq:Kstructure}
 \mathcal{K} = \left(\begin{array}{cc}\mathcal{K}_{dd}&\mathcal{K}_{dc}\\
 \mathcal{K}_{cd}&\mathcal{K}_{cc}\end{array}\right)
 \end{equation}
 where the matrix elements are certain non-local Hilbert type operators.
 Then we use the key observation, already made in \cite{KrSch}, that the abstract problem \eqref{eq:transport} with $\mathcal{R}(\tau, \underline{x}) = 0$ can be solved {\it{explicitly}} for the continuous part $x(\tau, \xi)$.  In fact, we have the relation
 \begin{equation}\label{eq:xpara1}
 x(\tau, \xi) = \int_{\tau}^\infty H_c(\tau, \sigma, \xi)f\big(\sigma, \frac{\kappa^{2}(\tau)}{\kappa^{2}(\sigma)}\xi\big)\,d\sigma
 \end{equation}
 with
  \begin{equation}\label{eq:xpara2}
H_c(\tau, \sigma, \xi) = \xi^{-\frac{1}{2}}\frac{\kappa^{\frac{3}{2}}(\tau)}{\kappa^{\frac{3}{2}}(\sigma)}\frac{\rho^{\frac{1}{2}}(\frac{\kappa^{2}(\tau)}{\kappa^{2}(\sigma)}\xi)}{\rho^{\frac{1}{2}}(\xi)}\sin\Big[\kappa(\tau)\xi^{\frac{1}{2}}\int_{\tau}^{\sigma}\kappa^{-1}(u)\,du\Big]
\end{equation}
Furthermore, letting (as in \cite{DonKri12})
\[
\hat{\mathcal{D}}_{c}: = \partial_{\tau} + \beta(\tau)\mathcal{A}_c,
\]
one computes from the above parametrix representation that
\begin{equation}\label{eq:xpara3}
\hat{\mathcal{D}}_{c}x(\tau, \xi) = \int_{\tau}^\infty \hat{H}_c(\tau, \sigma, \xi)f\big(\sigma, \frac{\kappa^{2}(\tau)}{\kappa^{2}(\sigma)}\xi\big)\,d\sigma
\end{equation}
with
\begin{equation}\label{eq:xpara4}
 \hat{H}_c(\tau, \sigma, \xi) = \frac{\kappa^{\frac{3}{2}}(\tau)}{\kappa^{\frac{3}{2}}(\sigma)}\frac{\rho^{\frac{1}{2}}(\frac{\kappa^{2}(\tau)}{\kappa^{2}(\sigma)}\xi)}{\rho^{\frac{1}{2}}(\xi)}\cos\Big[\kappa(\tau)\xi^{\frac{1}{2}}\int_{\tau}^{\sigma}\kappa^{-1}(u)\,du\Big]
\end{equation}
We can immediately formulate the following
\begin{lem}\label{lem:parbounds}
Denoting $\omega_{\nu}(\tau): = \tau^{1+\frac{1}{\nu}}$, and letting $\kappa(\tau) = \lambda(t(\tau))$ as in the preceding, we have the kernel bounds
\[
|H_c(\tau, \sigma, \xi)|\lesssim \min\{\omega_{\nu}(\frac{\tau}{\sigma})\xi^{-\frac{1}{2}},\,\nu\omega_{\nu}(\frac{\tau}{\sigma})\sigma\}
\]
\[
|\hat{H}_c(\tau, \sigma, \xi)|\lesssim \omega_{\nu}(\frac{\tau}{\sigma})
\]
\end{lem}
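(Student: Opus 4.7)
The plan is to bound the three factors in \eqref{eq:xpara2}, \eqref{eq:xpara4} separately and then to use either $|\sin|\le 1$ or $|\sin x|\le|x|$ for the two alternative estimates on $H_c$. The whole argument hinges on the fact that, since $\kappa(\tau)$ is increasing, $\alpha(\tau,\sigma):=\kappa^{2}(\tau)/\kappa^{2}(\sigma)\le 1$ for $\tau\le\sigma$.

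First I would calibrate $\kappa$: the oscillating factor $\exp(-\eps_0\sin\log t)$ in \eqref{Min} is pinched between the constants $e^{\pm|\eps_0|}$, so $\la(t)\asymp t^{-1-\nu}$, and inverting $\tau=\int_{t_0}^t\la(s)\,ds$ (with the sign convention making $\tau\to+\I$ as $t\to 0^+$) yields $\tau\asymp \nu^{-1}t^{-\nu}$. Hence $\kappa(\tau)=\la(t(\tau))\asymp(\nu\tau)^{1+1/\nu}$ and $\kappa(\tau)/\kappa(\sigma)\asymp\omega_\nu(\tau/\sigma)$ for $\tau\le\sigma$.

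The key step is the spectral-measure ratio. The asymptotics in Theorem~\ref{thm:spec} can be repackaged uniformly as $\rho(\xi)\asymp\langle\xi\rangle\xi^{-1/2}$ on $(0,\I)$, so since $\alpha\le 1$ and $\langle\cdot\rangle$ is monotone,
\[
\frac{\rho(\alpha\xi)}{\rho(\xi)}\asymp\alpha^{-1/2}\,\frac{\langle\alpha\xi\rangle}{\langle\xi\rangle}\le\alpha^{-1/2},\qquad\text{hence}\qquad \frac{\rho^{1/2}(\alpha\xi)}{\rho^{1/2}(\xi)}\lesssim\alpha^{-1/4}.
\]
This is precisely the compensation needed to collapse the $\kappa^{3/2}$ prefactor to a single power of $\kappa(\tau)/\kappa(\sigma)$:
\[
\frac{\kappa^{3/2}(\tau)}{\kappa^{3/2}(\sigma)}\cdot\frac{\rho^{1/2}(\alpha\xi)}{\rho^{1/2}(\xi)}\lesssim \alpha^{3/4}\alpha^{-1/4}=\alpha^{1/2}=\frac{\kappa(\tau)}{\kappa(\sigma)}\asymp\omega_\nu\!\bigl(\tfrac{\tau}{\sigma}\bigr).
\]
Applying $|\cos|\le 1$ to \eqref{eq:xpara4} gives the claimed bound on $\hat H_c$, and applying $|\sin|\le 1$ to \eqref{eq:xpara2} while keeping the explicit $\xi^{-1/2}$ gives the first bound $\omega_\nu(\tau/\sigma)\xi^{-1/2}$ on $H_c$.

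For the second bound on $H_c$ I would use $|\sin x|\le|x|$ in \eqref{eq:xpara2}, which cancels $\xi^{-1/2}$ and leaves the factor $\kappa(\tau)\int_\tau^\sigma\kappa^{-1}(u)\,du$. Using $\kappa^{-1}(u)\asymp\nu^{-1-1/\nu}u^{-1-1/\nu}$ and the elementary identity $\int_\tau^\sigma u^{-1-1/\nu}\,du=\nu(\tau^{-1/\nu}-\sigma^{-1/\nu})\le\nu\tau^{-1/\nu}$, this factor is bounded by $(\nu\tau)^{1+1/\nu}\cdot \nu^{-1/\nu}\tau^{-1/\nu}=\nu\tau\le\nu\sigma$, completing the estimate. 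The only genuinely subtle ingredient is the uniform reformulation $\rho(\xi)\asymp\langle\xi\rangle\xi^{-1/2}$ of the two-regime asymptotics, combined with $\alpha\le 1$; beyond that, the oscillating correction $e^{-\eps_0\sin\log t}$ plays no role at this level, so these kernel bounds extend verbatim to a broad class of quasi-power scalings $\la(t)$.
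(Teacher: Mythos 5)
Your proof is correct and fills in the details that the paper dismisses with the one-line remark that the bounds are "a simple consequence of $\kappa(\tau)\sim\tau^{1+1/\nu}$." The calibration $\kappa(\tau)\asymp(\nu\tau)^{1+1/\nu}$, the uniform reformulation $\rho(\xi)\asymp\xi^{-1/2}\langle\xi\rangle$ giving $\rho^{1/2}(\alpha\xi)/\rho^{1/2}(\xi)\lesssim\alpha^{-1/4}$ for $\alpha\le 1$, the collapse $\alpha^{3/4}\alpha^{-1/4}=\alpha^{1/2}=\kappa(\tau)/\kappa(\sigma)$, and the use of $|\cos|\le 1$, $|\sin|\le 1$, and $|\sin x|\le|x|$ respectively are exactly the right ingredients. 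One small presentational point: in the last step you write that $|\sin x|\le|x|$ "cancels $\xi^{-1/2}$ and leaves the factor $\kappa(\tau)\int_\tau^\sigma\kappa^{-1}(u)\,du$," which you then bound by $\nu\sigma$ — but of course the prefactor $\kappa^{3/2}(\tau)\kappa^{-3/2}(\sigma)\rho^{1/2}(\alpha\xi)\rho^{-1/2}(\xi)\lesssim\omega_\nu(\tau/\sigma)$ is still present and must be multiplied in to arrive at the stated $\nu\omega_\nu(\tau/\sigma)\sigma$; this is clearly what you intend, but it would be worth saying explicitly.
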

Indeed, this is a simple consequence of the fact that
\[
\kappa(\tau)\sim \tau^{1+\frac{1}{\nu}}
\]
For the discrete part $x_d(\tau)$ of the solution of \eqref{eq:transport}
 with $\mathcal{R}(\tau, \underline{x})  = 0$, we obtain the implicit equation
\begin{equation}\begin{split}\label{eq:x_d}
&x_d(\tau) = \int_{\tau}^\infty H_d(\tau, \sigma)\tilde{f}_d(\sigma)\,d\sigma,\; \; H_d(\tau, \sigma) = -\frac{1}{2}|\xi_d|^{-\frac{1}{2}}e^{-|\xi_d|^{\frac{1}{2}}|\tau-\sigma|}\\
&\tilde{f}_d(\sigma) = f_d(\sigma) -  \beta_{\nu}(\sigma)\partial_{\sigma}x_d(\sigma),
\end{split}\end{equation}
\\
In order to solve the problem \eqref{eq:transport} via a fixed point argument, we shall utilize the functional framework developed in \cite{DonKri12}:

\begin{defn}\label{def:spaces}
For the continuous spectral part $x(\tau, \xi)$, we shall use the following norms:
\begin{align*}
\|f\|_{X}: = \big\|(|\cdot|\langle\cdot\rangle^{-1})^{\frac{1}{2}-\delta}f\big\|_{L^p(0, \infty)} + \big\| \, |\cdot|^{\frac{1}{2}}\langle\cdot\rangle^{\frac{1}{8}}f\big\|_{L^2_{\rho}(0,\infty)}
\end{align*}
\begin{align*}
\|f\|_{Y}: = \big\|f\big\|_{L^p(0, \infty)} + \big\|\langle\cdot\rangle^{\frac{1}{8}}f\big\|_{L^2_{\rho}(0,\infty)}
\end{align*}
as well as
\[
\|u\|_{\mathcal{X}^{\beta}}: = \sup_{\tau>\tau_0}\tau^{\beta}\|u(\tau, \cdot)\|_{X},\,\|u\|_{\mathcal{Y}^{\beta}}: = \sup_{\tau>\tau_0}\tau^{\beta}\|u(\tau, \cdot)\|_{Y}
\]
Then for the vector valued function $\underline{x}(\tau, \xi)$, we put
\begin{align*}
\big\|\underline{x}\big\|_{\mathcal{X}^{\alpha, \beta}}: = \sup_{\tau>\tau_0}\tau^{\alpha}|x_d(\tau)| + \|x(\tau, \cdot)\|_{\mathcal{X}^{\beta}}
\end{align*}
\begin{align*}
\big\|\underline{x}\big\|_{\mathcal{Y}^{\alpha, \beta}}: = \sup_{\tau>\tau_0}\tau^{\alpha}|x_d(\tau)| + \|x(\tau, \cdot)\|_{\mathcal{Y}^{\beta}}
\end{align*}
We remark that in the following $\delta>0$ is assumed to be small and $p>1$ is assumed
to be large, depending on $\delta$.
\end{defn}
To proceed, we first need to study the linear inhomogeneous problem
\begin{equation}\label{eq:lininhom}
\big(\hat{\mathcal{D}}^2 + \beta(\tau)\hat{\mathcal{D}} + \underline{\xi}\big)\underline{x}(\tau, \xi) =  \underline{f}(\tau, \underline{\xi})
\end{equation}
To prepare for this task, we have
\begin{lem}
\label{lem:estB}
Let $\kappa(\tau) = \lambda(t(\tau))$ as in the preceding. Let $a,b,\gamma \in \R$, $q\in (1,\infty)$,
and
\[ \alpha > 1+2\left (\tfrac{1}{q}+\max(|a|,|a+b|)\right )(1+\tfrac{1}{\nu})-\gamma. \]
Suppose further
that the operator $\mathcal{B}$ is given by
\[ \mathcal{B} x(\tau,\xi)=\int_\tau^\infty B(\tau,\sigma,\xi)
x(\sigma,\omega(\tau, \sigma)^2\xi)d\sigma, \]
where $\omega(\tau, \sigma):=\kappa(\tau)\kappa^{-1}(\sigma)$,
and the kernel $B$ satisfies
\[ |B(\tau,\sigma,\xi)|\lesssim \sigma^{-\gamma} \]
for all $0<\tau_0\leq \tau \leq \sigma$, $\xi\geq 0$.
Then we have the bound
\[ \|\mathcal{B} x(\tau,\cdot)|\cdot|^a\langle \cdot\rangle^b\|_{L^q(0,\infty)}
\lesssim \tau^{-\alpha-\gamma+1}\sup_{\sigma>\tau}\sigma^\alpha\|x(\sigma,\cdot)
|\cdot|^a \langle \cdot \rangle^b\|_{L^q(0,\infty)}. \]
\end{lem}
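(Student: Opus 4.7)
The plan is to follow the standard Minkowski-plus-rescaling approach in the spirit of \cite{KrSch} and \cite{DonKri12}, with the asymptotic $\kappa(\tau)\sim\tau^{1+1/\nu}$ (which is immediate from $\lambda(t)\sim t^{-1-\nu}$) providing the decisive scale comparison. First, I would apply Minkowski's integral inequality to the $\sigma$-integral defining $\mathcal{B}x(\tau,\xi)$ and immediately insert the pointwise kernel bound $|B(\tau,\sigma,\xi)|\lesssim\sigma^{-\gamma}$ to obtain
\[
\bigl\|\mathcal{B}x(\tau,\cdot)\,|\cdot|^a\langle\cdot\rangle^b\bigr\|_{L^q(0,\infty)}\leq \int_\tau^\infty \sigma^{-\gamma}\bigl\|x(\sigma,\omega(\tau,\sigma)^2\,\cdot\,)\,|\cdot|^a\langle\cdot\rangle^b\bigr\|_{L^q(0,\infty)}\,d\sigma.
\]

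Second, for each fixed $\sigma\geq\tau$, I would substitute $\eta=\omega(\tau,\sigma)^2\xi$ inside the inner $L^q$ norm; this contributes a Jacobian factor $\omega^{-2/q}$ and replaces the weight by $|\omega^{-2}\eta|^a\langle\omega^{-2}\eta\rangle^b$. Since $\kappa$ is (asymptotically) increasing in $\tau$, we have $\omega(\tau,\sigma)\lesssim 1$ whenever $\tau_0\leq\tau\leq\sigma$, and splitting the $\eta$-line into the three regimes $|\eta|\lesssim\omega^2$, $\omega^2\lesssim|\eta|\lesssim 1$, $|\eta|\gtrsim 1$ — where $|\xi|^a\langle\xi\rangle^b$ behaves like $|\cdot|^a$ near the origin and like $|\cdot|^{a+b}$ at infinity — yields the uniform comparison
\[
|\omega^{-2}\eta|^a\langle\omega^{-2}\eta\rangle^b\lesssim \omega(\tau,\sigma)^{-2\max(|a|,|a+b|)}\,|\eta|^a\langle\eta\rangle^b.
\]
Combined with the Jacobian this gives
\[
\bigl\|x(\sigma,\omega^2\,\cdot\,)\,|\cdot|^a\langle\cdot\rangle^b\bigr\|_{L^q}\lesssim \omega(\tau,\sigma)^{-s}\bigl\|x(\sigma,\cdot)\,|\cdot|^a\langle\cdot\rangle^b\bigr\|_{L^q},\qquad s:=\tfrac{2}{q}+2\max(|a|,|a+b|).
\]

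Third, I would insert $\omega(\tau,\sigma)^{-1}\sim(\sigma/\tau)^{1+1/\nu}$ and the a priori bound $\|x(\sigma,\cdot)|\cdot|^a\langle\cdot\rangle^b\|_{L^q}\leq\sigma^{-\alpha}M$ with $M:=\sup_{\sigma>\tau}\sigma^\alpha\|x(\sigma,\cdot)|\cdot|^a\langle\cdot\rangle^b\|_{L^q}$, reducing everything to the scalar integral
\[
\tau^{-s(1+1/\nu)}\,M\int_\tau^\infty \sigma^{-\gamma+s(1+1/\nu)-\alpha}\,d\sigma.
\]
The hypothesis on $\alpha$ is precisely $-\gamma+s(1+1/\nu)-\alpha<-1$, so the integral converges and equals a constant times $\tau^{1-\gamma+s(1+1/\nu)-\alpha}$; the two factors $\tau^{\pm s(1+1/\nu)}$ then cancel, leaving the claimed $\tau^{-\alpha-\gamma+1}M$.

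The main subtlety, and precisely where the hypothesis on $\alpha$ originates, is the sharp rescaling estimate for the inhomogeneous weight $|\xi|^a\langle\xi\rangle^b$. Because the two competing exponents $a$ (near $0$) and $a+b$ (at infinity) dictate the worst growth of $\omega^{-r}$ when $\omega\leq 1$, the factor $\max(|a|,|a+b|)$ is forced; a single-scaling treatment of the weight would either overshoot or undershoot in one of the regimes. Once that estimate is in place, the rest of the argument is a routine Schur/Hardy-type integration.
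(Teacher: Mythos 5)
Your proof is correct. The only genuine difference from the paper's argument is the first step: you apply Minkowski's integral inequality directly to the $\sigma$-integral, whereas the paper first applies H\"older's inequality in $\sigma$ with an auxiliary splitting of $\sigma^{-\gamma}$ into $\sigma^{-(1+\epsilon)/q'}\cdot\sigma^{(1+\epsilon)/q'-\gamma}$ (obtaining a pointwise bound on $|\mathcal{B}x(\tau,\xi)|$), then passes to the $L^q_\xi$ norm and exchanges orders of integration. The two routes are equivalent in substance, but your Minkowski approach is slightly cleaner: it avoids introducing the small parameter $\epsilon$ and the concomitant book-keeping of choosing it compatibly with the hypothesis on $\alpha$; instead, the strict inequality on $\alpha$ is used directly to make the $\sigma$-integral converge. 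The rest of your argument --- change of variables $\eta=\omega(\tau,\sigma)^2\xi$, the Jacobian factor $\omega^{-2/q}$, the weight-rescaling estimate $|\omega^{-2}\eta|^a\langle\omega^{-2}\eta\rangle^b\lesssim\omega^{-2\max(|a|,|a+b|)}|\eta|^a\langle\eta\rangle^b$ for $\omega\leq 1$ (which you correctly establish by splitting into the three regimes), the substitution $\omega^{-1}\sim(\sigma/\tau)^{1+1/\nu}$, and the final power-counting --- matches the paper's estimate exactly, and your exponent $s=\tfrac{2}{q}+2\max(|a|,|a+b|)$ is precisely the $q$-th root of the paper's factor $\omega_\nu(\tau/\sigma)^{-2-2q\max(|a|,|a+b|)}$. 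The convergence criterion you derive, $\alpha>1-\gamma+s(1+\tfrac{1}{\nu})$, reproduces the hypothesis verbatim, and the cancellation of $\tau^{\pm s(1+1/\nu)}$ yields the claimed $\tau^{-\alpha-\gamma+1}$. No gaps.
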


\begin{proof}
First, we consider the case $a=b=0$.
By H\"older's inequality we obtain
\begin{equation}
\label{eq:Bx}
|\mathcal{B} x(\tau,\xi)|\lesssim \left (\int_\tau^\infty \sigma^{-1-\epsilon}d\sigma \right )^{1/q'}
\left (\int_\tau^\infty |\sigma^{\frac{1}{q'}(1+\epsilon)-\gamma}x(\sigma,\omega(\tau, \sigma)^2\xi)|^q d\sigma
\right )^{1/q}
\end{equation}
for any $\epsilon>0$.
This implies
\begin{align*}
\|\mathcal{B} x(\tau,\cdot)\|_{L^q}&\lesssim \tau^{-\frac{\epsilon}{q'}}\left (
\int_\tau^\infty \sigma^{q(\frac{1}{q'}(1+\epsilon)-\gamma-\alpha)}\|\sigma^\alpha x(\sigma,
\omega(\tau, \sigma)^2 \cdot)\|_{L^q}^q \right )^{1/q} \\
&\lesssim \tau^{-\frac{\epsilon}{q'}}\left [\sup_{\sigma>\tau}\sigma^\alpha
\|x(\sigma,\cdot)\|_{L^q}\right ]
\left (\int_\tau^\infty \sigma^{q(\frac{1}{q'}(1+\epsilon)-\gamma-\alpha)}\omega_{\nu}(\tfrac{\tau}{\sigma})^{-2}
d\sigma \right )^{1/q} \\
&\lesssim \tau^{-\alpha-\gamma+1}\sup_{\sigma>\tau}\sigma^\alpha
\|x(\sigma,\cdot)\|_{L^q}
\end{align*}
provided $q(\frac{1}{q'}-\gamma-\alpha)+2(1+\frac{1}{\nu})<-1$ and $\epsilon>0$ is chosen
sufficiently small.
The case for general $a,b$ follows immediately by noting that
\[ \|x(\sigma,\omega(\tau, \sigma)^2\cdot)|\cdot|^a \langle \cdot \rangle^b\|_{L^q}^q
\lesssim \omega_{\nu}(\tfrac{\tau}{\sigma})^{-2-2q\max(|a|,|a+b|)}
\|x(\sigma,\cdot)|\cdot|^a \langle \cdot \rangle^b\|_{L^q}^q \]
which entails the integrability condition
\[ q(\tfrac{1}{q'}-\gamma-\alpha)+\left (2+2q\max(|a|,|a+b|)\right )(1+\tfrac{1}{\nu})<-1. \]
\end{proof}

We can now solve \eqref{eq:lininhom} by the following

\begin{lem}
\label{lem:estH}
Let $\alpha_d \in \R$ and $\alpha_c>1+\frac34(1+\frac{1}{\nu})$. Then given $\underline{f}\in \mathcal{Y}^{\alpha_d,\alpha_c}$, there exists a solution $\underline{x}\in \mathcal{X}^{\alpha_d,\alpha_c-1-2\delta}$ for \eqref{eq:lininhom}. Denoting this solution by
\[
\underline{x} =: \left(\begin{array}{c} \mathcal{H}_d f_d \\ \mathcal{H}_{c}f\end{array}\right) =:\mathcal{H}\underline{f},
\]
we have the estimates
\begin{align*}
\|\mathcal{H} \underline{x}\|_{\mathcal{X}^{\alpha_d,\alpha_c-1-2\delta}}&\lesssim \nu^{2\delta}\|\underline{x}\|_{\mathcal{Y}^{\alpha_d,\alpha_c}} \\
\|\hat{\mathcal{D}}\mathcal{H} \underline{x}\|_{\mathcal{Y}^{\alpha_d,\alpha_c-1}}&\lesssim \|\underline{x}\|_{\mathcal{Y}^{\alpha_d,\alpha_c}}
\end{align*}
where $\delta>0$ is the parameter in Definition \ref{def:spaces}.
\end{lem}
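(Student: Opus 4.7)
My plan is to construct $\mathcal{H}\underline{f}$ explicitly via the parametrix representations already derived in \cite{KrSch} and recorded in \eqref{eq:xpara1}--\eqref{eq:xpara4}, then verify the two norm estimates by bounding the resulting integral operators. I would first split $\underline{f}=(f_d,f)$ and treat the discrete and continuous spectral components separately. On the continuous subspace I set
\[
x(\tau,\xi):=\int_\tau^\infty H_c(\tau,\sigma,\xi)\,f\bigl(\sigma,\omega(\tau,\sigma)^2\xi\bigr)\,d\sigma,
\]
with $H_c$ as in \eqref{eq:xpara2}; the fact that this solves the continuous part of \eqref{eq:lininhom} is the content of the derivation in \cite{KrSch}. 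On the discrete subspace I would solve the implicit Volterra equation \eqref{eq:x_d} by iteration: the kernel $H_d$ has exponential decay in $|\tau-\sigma|$, and the coefficient $\beta_\nu(\sigma)\sim \sigma^{-1}$ of the lower-order term $\partial_\sigma x_d$ is negligible at large $\tau$, so the iteration converges and returns the trivial bound $|x_d(\tau)|\lesssim \tau^{-\alpha_d}\|f_d\|_{\alpha_d}$, together with the analogous bound for $\hat{\mathcal D}x_d$.

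The real content lies in the continuous estimate. To bound $\|x(\tau,\cdot)\|_X$ I would treat its weighted $L^p$ and weighted $L^2_\rho$ pieces in turn, in each case applying Minkowski's inequality in $\sigma$ to reduce to pointwise-in-$\sigma$ bounds of the form required by Lemma \ref{lem:estB}. For the $L^2_\rho$ piece the weight $|\xi|^{1/2}$ exactly cancels the singular factor $\xi^{-1/2}$ in the first bound for $H_c$ from Lemma \ref{lem:parbounds}, so after changing variables $\eta=\omega(\tau,\sigma)^2\xi$ one invokes Lemma \ref{lem:estB} with $q=2$, $a=\tfrac12$, $b=\tfrac18$, $\gamma=0$, and the required lower bound on $\alpha_c$ is comfortably implied by the hypothesis. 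For the $L^p$ piece with weight $(|\xi|\langle\xi\rangle^{-1})^{1/2-\delta}$, the weight decay at $\xi=0$ does not quite absorb the $\xi^{-1/2}$ singularity, so I would split the $\xi$-integral at the crossover scale $\xi_0\sim(\nu\sigma)^{-2}$ dictated by the $\min$ in Lemma \ref{lem:parbounds}: on $\xi<\xi_0$ the bounded factor $\nu\sigma\,\omega_\nu(\tau/\sigma)$ combines with the integration of $\xi^{p(1/2-\delta)}$ up to $\xi_0$ to yield the prefactor $\nu^{2\delta}\sigma^{2\delta}$, while on $\xi\geq\xi_0$ the $\omega_\nu(\tau/\sigma)\xi^{-1/2}$ bound is $L^p$-integrable for $p$ large. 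Applying Lemma \ref{lem:estB} with effective $\gamma=-2\delta$ on the small-$\xi$ piece then delivers $\|x(\tau,\cdot)\|_X\lesssim \nu^{2\delta}\tau^{-\alpha_c+1+2\delta}\|f\|_{\mathcal{Y}^{\alpha_c}}$.

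For the bound on $\hat{\mathcal D}\mathcal H\underline{f}$ I would use the improved representation \eqref{eq:xpara3}--\eqref{eq:xpara4}: the kernel $\hat H_c$ is controlled purely by $\omega_\nu(\tau/\sigma)$, without any $\xi^{-1/2}$ singularity, so no $\xi$-splitting is needed and the extra loss of $2\delta$ disappears. A direct application of Lemma \ref{lem:estB} with $\gamma=0$ and the $Y$-weights $(a,b)=(0,0)$ on the $L^p$ piece and $(0,\tfrac18)$ on the $L^2_\rho$ piece yields the claimed $\|\hat{\mathcal D}_c x(\tau,\cdot)\|_Y\lesssim \tau^{-\alpha_c+1}\|f\|_{\mathcal Y^{\alpha_c}}$. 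The main obstacle I anticipate is the careful bookkeeping needed to extract exactly the factor $\nu^{2\delta}$, and no larger power of $\nu$, in the first estimate, since precisely this smallness is what later drives the Neumann-series argument outlined in the introduction; a related but less serious subtlety is that the spectral dilation $\xi\mapsto\omega(\tau,\sigma)^2\xi$ interacts nontrivially with all the weights, but this interaction is exactly what Lemma \ref{lem:estB} is designed to absorb.
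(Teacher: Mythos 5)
Your proposal follows the same strategy as the paper: construct $\mathcal{H}$ explicitly via the parametrix \eqref{eq:xpara1}--\eqref{eq:xpara4}, solve the implicit discrete equation \eqref{eq:x_d} by iteration, and reduce the continuous estimates to Lemma~\ref{lem:estB} after weight bookkeeping. Your splitting of the $\xi$-integral at the crossover $\xi_0\sim(\nu\sigma)^{-2}$ for the $L^p$ component is the same computation as the paper's interpolated kernel bound $|H_c(\tau,\sigma,\xi)|\lesssim \nu^{2\delta}\omega_\nu(\tau/\sigma)\sigma^{2\delta}\xi^{-\frac12+\delta}$ (obtained by taking a geometric mean of the two entries of the $\min$ in Lemma~\ref{lem:parbounds}), and it correctly isolates the $\nu^{2\delta}$ prefactor and the $2\delta$ loss in time decay.

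The one place where your outline is off, and where it would not survive being written out, is the $L^2_\rho$ piece. You say you would invoke Lemma~\ref{lem:estB} with $q=2$, $a=\frac12$, $b=\frac18$, $\gamma=0$, and that the resulting constraint is ``comfortably implied'' by the hypothesis; but those parameters give the requirement $\alpha_c>1+2(\frac12+\frac58)(1+\frac1\nu)=1+\frac94(1+\frac1\nu)$, far stronger than $1+\frac34(1+\frac1\nu)$. This is also internally inconsistent with your own remark that $|\xi|^{1/2}$ cancels the $\xi^{-1/2}$ singularity of $H_c$: after that cancellation there is no $a=\frac12$ weight left, and the input $x$ is only controlled in the $Y$-norm (weight $\langle\cdot\rangle^{1/8}$), not $X$. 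The correct accounting is: (i) use $|H_c|\lesssim\omega_\nu(\tau/\sigma)\xi^{-1/2}$ to absorb $|\xi|^{1/2}$, which leaves not just a bounded kernel but one with an \emph{extra} factor $\omega_\nu(\tau/\sigma)$ of decay; (ii) convert $L^2_\rho$ with weight $\langle\cdot\rangle^{1/8}$ to Lebesgue $L^2$ with weight $\simeq\xi^{-1/4}\langle\xi\rangle^{5/8}$, i.e. effective $a=-\frac14$, $a+b=\frac38$; then the integrability condition in (the proof of) Lemma~\ref{lem:estB}, using the extra power $\omega_\nu^{q}$ coming from the kernel, becomes $\alpha_c>1+\frac34(1+\frac1\nu)$ \emph{exactly}. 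So the hypothesis is tight, not comfortable, and the extra $\omega_\nu$ in the kernel bound is what makes the lemma true as stated; dropping it, as your parameter choice implicitly does, would leave a genuine gap.
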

\begin{proof} We can explicitly define the continuous spectral part $x(\tau, \xi)$ via \eqref{eq:xpara1}, \eqref{eq:xpara2}, and the discrete part $x_d(\tau)$ implicitly via \eqref{eq:x_d}.
Combining \eqref{eq:xpara3}, \eqref{eq:xpara4}, Lemma~\ref{lem:parbounds} as well as Lemma \ref{lem:estB}
we get $\|\hat{\mathcal{D}}_c \mathcal{H}_c x\|_{\mathcal{Y}^{\alpha_c-1}}\lesssim \|x\|_{\mathcal{Y}^{\alpha_c}}$.
On the other hand, using
\[ |H_c(\tau,\sigma,\xi)|\lesssim \nu^{2\delta}\omega_{\nu}(\tfrac{\tau}{\sigma})\sigma^{2\delta}
\xi^{-\frac12+\delta} \]
which follows from Lemma~\ref{lem:parbounds} by interpolation, as well as Lemma~\ref{lem:estB}, we have
\[ \| \, |\cdot|^{\frac12-\delta}\mathcal{H}_c x(\tau,\cdot)\|_{L^p}\lesssim \nu^{2\delta}
\tau^{-\alpha_c+1+2\delta}\|x\|_{\mathcal{Y}^{\alpha_c}}. \]
Further, Lemma~\ref{lem:estB} gives
\[ \| \, |\cdot|^\frac12 \langle \cdot\rangle^\frac18 \mathcal{H}_c x(\tau,\cdot)\|_{L^2_\rho}\lesssim
\| \, |\cdot|^\frac12 \mathcal{H}_c x(\tau,\cdot)\|_Y\lesssim \tau^{-\alpha_c+1}\|x\|_{\mathcal{Y}^{\alpha_c}}. \]
This completes the desired bounds for the continuous part $x(\tau, \cdot)$.
To control the discrete part, we observe that, see \eqref{eq:x_d}
\[
\sup_{\tau>\tau_0}\tau^{\alpha_d}\big|\int_{\tau}^\infty H_d(\tau, \sigma)\,f(\sigma)\,d\sigma\big|\lesssim \sup_{\tau>\tau_0}\tau^{\alpha_d}|f(\tau)|,
\]
\[
\sup_{\tau>\tau_0}\tau^{\alpha_d}\big|\partial_{\tau}\int_{\tau}^\infty H_d(\tau, \sigma)\,f(\sigma)\,d\sigma\big|\lesssim \sup_{\tau>\tau_0}\tau^{\alpha_d}|f(\tau)|,
\]
In light of the fact that
\[
\beta_{\nu}(\tau)\sim \frac{1}{\tau},
\]
the implicit equation
\[
x_d(\tau) = \int_{\tau}^\infty H_d(\tau, \sigma)\big(f_d(\sigma) -  \beta_{\nu}(\sigma)\partial_{\sigma}x_d(\sigma)\big)\,d\sigma
\]
is then solved via straightforward iteration provided $\tau>\tau_0$ with $\tau_0$ sufficiently large, and the limit satisfies
\[
\sup_{\tau>\tau_0}\tau^{\alpha_d}|x_d(\tau)|\lesssim \sup_{\tau>\tau_0}\tau^{\alpha_d}|f_d(\tau)|
\]
This completes the proof of the lemma.
\end{proof}

\subsection{Solving the main equation}
\label{subsec:solmain}
Abstractly speaking, Eq.~\eqref{eq:transport} is of the form
\begin{equation}
\label{eq:abstract}
Lx=x_0+Ax+F(x)
\end{equation}
where $x_0$ is a given element in a Banach space $X$, $A$ is a bounded linear operator on $X$,
and $F$ is a nonlinear mapping from $X$ to $X$.
Furthermore, the operator $L$ is linear and invertible with bounded inverse $H$, in light of Lemma~\ref{lem:estH}.
The goal is to find a solution $x\in X$.
Compared to Eq.~\eqref{eq:transport}, this is a slightly simplified model case but it captures the essentials.
By applying $H$, one rewrites Eq.~\eqref{eq:abstract} as
\begin{equation}
\label{eq:abstract2}
x=Hx_0+HAx+HF(x).
\end{equation}
The point is to find a method to solve Eq.~\eqref{eq:abstract2}, even if the operator norm
of $HA$ is not small, i.e., if one cannot apply the Banach
fixed point theorem directly.
The idea is to perform an iteration procedure.
This means that one first proves the existence of $(1-HA)^{-1}$ which amounts to showing the norm-convergence of
the Neumann series
\[ \sum_{n=0}^\infty (HA)^n. \]
Thus, one has to consider $\|(HA)^n\|$ and prove an appropriate bound that makes the Neumann series convergent.
The point is, of course, that only very large $n$ are relevant here and hence, one may exploit a smallness
property which only shows up after sufficiently many iterations. This is exactly the idea which is used
to solve Volterra equations.
Once one has obtained the existence of $(1-HA)^{-1}$, one rewrites Eq.~\eqref{eq:abstract2} as
\begin{equation}
\label{eq:abstract3}
x=(1-HA)^{-1}Hx_0+(1-HA)^{-1}HF(x)
\end{equation}
and if the nonlinearity $F$ is suitable, it is possible to
solve Eq.~\eqref{eq:abstract3} by a fixed point argument.
This is roughly speaking the program we are going to follow in order to solve Eq.~\eqref{eq:transport}.

\subsection{Time decay of the inhomogeneous term}
According to the program outlined at the beginning of subsection \ref{subsec:solmain}, we first focus
on the difficult linear terms on the right-hand side of Eq.~\eqref{eq:transport}.
In fact, the linear term with the least decay is the one containing the derivative $\hat{\mathcal{D}}\underline{x}$ since
the other one comes with a prefactor of $\tau^{-2}$ which is enough to treat it directly
with the Banach fixed point theorem.
Thus, for the moment we focus on the equation
\begin{equation}
\label{eq:iterate}
[\hat{\mathcal{D}}^2+\beta \hat{\mathcal{D}}+\underline{\xi}]\underline{x}=\underline{e}
-2\beta \mathcal{K} \hat {\mathcal{D}} \underline{x}
\end{equation}
where
\[ \underline{e}(\tau,\xi):=\kappa(\tau)^{-2}\mathcal{F}[|\cdot|\tilde \chi(\tau,\cdot) e_2(\tau,\cdot)](\xi) \]
is the inhomogeneous term on the right-hand side of Eq.~\eqref{eq:transport}.
The first step, however, is to identify suitable spaces in which we intend to solve Eq.~\eqref{eq:iterate}.
It is clear that we have to solve for the pair $(\underline{x},\hat{\mathcal{D}}\underline{x})$ since
both terms $\underline{x}$ and $\hat{\mathcal{D}}\underline{x}$ appear on the right-hand side of Eq.~\eqref{eq:transport}.
The estimates in Lemma~\ref{lem:estH} suggest to place $(\underline{x},\hat{\mathcal{D}}\underline{x})$ in
$\mathcal{X}^{\alpha_1,\beta_1}\times \mathcal{Y}^{\alpha_2,\beta_2}$ where the decay rates $\alpha_j$ and
$\beta_j$, $j=1,2$, are dictated by the inhomogeneous term $\underline{e}$. For the latter we have

\begin{lem}
\label{lem:inhomog}
We have the estimates
\begin{align*}
|\kappa(\tau)^{-2}\mathcal{U}\big (|\cdot|\tilde \chi(\tau,\cdot) e_2(\tau,\cdot)\big )(\xi_d)|
&\leq C_\nu \tau^{-3+\frac12(1+\frac{1}{\nu})+\epsilon} \\
|\kappa(\tau)^{-2}\mathcal{U}\big (|\cdot|\tilde \chi(\tau,\cdot)e_2(\tau,\cdot)\big )(\xi)|
&\leq C_\nu \tau^{-3+\frac12(1+\frac{1}{\nu})+\epsilon} \langle \xi \rangle^{-1}
\end{align*}
for all $\tau\gtrsim 1$, $\xi\geq 0$ and any fixed $\epsilon>0$.
\end{lem}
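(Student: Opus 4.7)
The plan is to estimate the Fourier integral
\[
\mathcal{U}\bigl(|\cdot|\tilde\chi(\tau,\cdot) e_2(\tau,\cdot)\bigr)(\xi) = \int_0^\infty \phi(R,\xi)\, R\, \tilde\chi\bigl(\tfrac{R}{\nu\tau}\bigr)\, e_2\bigl(t(\tau),\tfrac{R}{\kappa(\tau)}\bigr)\, dR
\]
by plugging in the pointwise bound from Proposition~\ref{prop1}(i) and using the spectral asymptotics of Theorem~\ref{thm:spec}. First, I pass to $(\tau,R)$ variables; since $\lambda(t)\sim t^{-1-\nu}$ up to bounded oscillations, we have $\kappa(\tau)\sim\tau^{1+1/\nu}$, $t(\tau)\sim\tau^{-1/\nu}$ and $\mu(t(\tau))\sim\tau$. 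Substituting $e_2=\lambda^{1/2}t^{-2}\mu^{-2}\,O(\log(R+2)/(R+1))$ yields
\[
\kappa^{-2}(\tau)\,R\,\tilde\chi\bigl(\tfrac{R}{\nu\tau}\bigr)\,|e_2|\;\lesssim\;\tau^{-7/2+1/(2\nu)}\,\tilde\chi\bigl(\tfrac{R}{\nu\tau}\bigr)\,\frac{R\log(R+2)}{R+1},
\]
effectively cut off to $R\le c\nu\tau$. The argument then splits according to the size of~$\xi$.

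For $\xi\lesssim 1$, the asymptotics of $\phi$ from Theorem~\ref{thm:spec}, together with boundedness of $\phi_0$, give $|\phi(R,\xi)|\lesssim 1$ uniformly in~$R$. Hence
\[
\bigl|\mathcal{U}(\cdots)(\xi)\bigr|\;\lesssim\;\tau^{-7/2+1/(2\nu)}\int_0^{\nu\tau}\log(R+2)\,dR\;\lesssim\;\tau^{-5/2+1/(2\nu)+\epsilon},
\]
which after factoring the $\kappa^{-2}$ matches $\tau^{-3+(1+1/\nu)/2+\epsilon}$ (the factor $\langle\xi\rangle^{-1}\sim 1$ is harmless in this range). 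For the discrete eigenvalue the same computation, using exponential decay of $\phi(\cdot,\xi_d)$, produces an even stronger bound, hence the first inequality of the lemma.

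For $\xi\gtrsim 1$, I would use the WKB asymptotic
\[
\phi(R,\xi) = \tfrac{1}{2i}\xi^{-1/2}\bigl[e^{i\sqrt\xi R}-e^{-i\sqrt\xi R}\bigr] + \text{symbol $O$-remainders}
\]
from Theorem~\ref{thm:spec} together with a single integration by parts in~$R$. The explicit $\xi^{-1/2}$ prefactor, combined with the further $\xi^{-1/2}$ produced by IBP on the oscillation, yields the claimed $\langle\xi\rangle^{-1}$ decay. To justify the IBP, set $f(R):=\kappa^{-2}R\tilde\chi e_2$; then $f$ vanishes at both endpoints, and Proposition~\ref{prop1}(ii) (written via $R\partial_R=r\partial_r$) implies that $\partial_R(Re_2)$ obeys the same base-rate bound as $e_2$ itself, whence $\|\partial_R f\|_{L^1}\lesssim\tau^{-7/2+1/(2\nu)+\epsilon}$ after absorbing two logarithmic factors. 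Consequently $|\mathcal{U}(f)(\xi)|\lesssim\xi^{-1}\tau^{-7/2+1/(2\nu)+\epsilon}$, well within the desired estimate.

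The main obstacle is the careful bookkeeping across the transition regions of $\phi$'s asymptotics in Theorem~\ref{thm:spec}: matching the small-$\xi$ expansions on $0\le R\le\xi^{-1/2}$ and $R\ge\xi^{-1/6}$, and, in the large-$\xi$ regime, handling the $O(\xi^{-1/2})$ and $O(\langle R\rangle^{-3}\xi^{-1/2})$ remainders after integration by parts. Because these remainders satisfy symbol-type bounds, they do not generate any losses beyond the main term, but the bookkeeping must be done without losing powers of $\xi$ or $\tau$; this is the only delicate point in the proof.
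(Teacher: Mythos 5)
Your proof is correct and follows essentially the same route as the paper's: plug in the pointwise bound on $e_2$ from Proposition~\ref{prop1}, use $\kappa(\tau)\sim\tau^{1+1/\nu}$ to convert everything to powers of $\tau$, use $|\phi(R,\xi)|\lesssim 1$ for $\xi\lesssim 1$ (and for $\xi_d$) to integrate directly, and for $\xi\gtrsim 1$ use the explicit $\xi^{-1/2}$ prefactor in the WKB asymptotics plus one integration by parts (controlled via the derivative bounds from Proposition~\ref{prop1}(ii)) to extract $\langle\xi\rangle^{-1}$. Two small remarks: the phrase ``after factoring the $\kappa^{-2}$'' is spurious since you already absorbed $\kappa^{-2}$ into the $\tau^{-7/2+1/(2\nu)}$ prefactor, and the ``transition region'' worry is benign because for $\xi\lesssim 1$ the two ranges $[0,\xi^{-1/2}]$ and $[\xi^{-1/6},\infty)$ in Theorem~\ref{thm:spec} actually overlap.
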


\begin{proof}
According to Proposition~\ref{prop1}
 we have the bound
\begin{align*}
|\kappa(\tau)^{-2}\tilde \chi(\tau,R)R e_2(\tau,R)|&=|\kappa(\tau)^{-2}\chi(\tfrac{R}{\nu \tau})
R e_2(\tfrac{\nu \tau}{\kappa(\tau)},\tfrac{R}{\kappa(\tau)})| \\
&\leq C_\nu \kappa(\tau)^{\frac12}\tau^{-4}\langle R \rangle^\epsilon
\end{align*}
for some fixed (but arbitrary) $\epsilon>0$ and with symbol behavior of the derivatives of degree at most two.
If $0<\xi\lesssim 1$ or $\xi=\xi_d$ we have from Theorem \ref{thm:spec} the bound $|\phi(R,\xi)|\lesssim 1$
for all $R\geq 0$ and thus,
\begin{align*}
|\kappa(\tau)^{-2}\mathcal{U}\big (|\cdot|\tilde \chi(\tau,\cdot) e_2(\tau,\cdot)\big )(\xi)|
&\leq C_\nu \kappa(\tau)^\frac12 \tau^{-4}\int_0^{3\nu \tau} |\phi(R,\xi)|\langle R\rangle^\epsilon dR \\
&\leq C_\nu \tau^{-3+\frac12(1+\frac{1}{\nu})+\epsilon}.
\end{align*}
If $\xi\gtrsim 1$ we exploit the oscillatory behavior of $\phi(R,\xi)$ given in Theorem \ref{thm:spec}
and perform one integration by parts to gain an additional factor $\xi^{-\frac12}$.
This yields the bound
\[
|\kappa(\tau)^{-2}\mathcal{U}\big (|\cdot|\tilde \chi(\tau,\cdot)e_2(\tau,\cdot)\big )(\xi)|\leq C_\nu
\tau^{-4+\frac12(1+\frac{1}{\nu})+\epsilon}\xi^{-1} \]
which implies the claim.
\end{proof}

\begin{lem}
\label{lem:inhomY}
Let $\alpha,\beta<3-\frac12(1+\frac{1}{\nu})$. Then the function
$\underline{e}$ belongs
to the space $\mathcal{Y}^{\alpha,\beta}$.
\end{lem}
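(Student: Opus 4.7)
The plan is to derive this directly from the pointwise bounds on $\underline{e}$ established in Lemma~\ref{lem:inhomog}, since the $\mathcal{Y}^{\alpha,\beta}$ norm is essentially obtained by integrating those pointwise bounds against the spectral measure (plus an $L^p$ check). The strategy is to split into the discrete and continuous components and verify the two integrability conditions entering the definition of the $Y$-norm.

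First I would handle the discrete component. Given $\alpha<3-\tfrac12(1+\tfrac1\nu)$, choose $\epsilon>0$ small enough that $\alpha+\epsilon\le 3-\tfrac12(1+\tfrac1\nu)$. Lemma~\ref{lem:inhomog} then yields $\tau^\alpha |\underline{e}_d(\tau)|\le C_\nu\,\tau^{\alpha-3+\frac12(1+\frac1\nu)+\epsilon}\lesssim 1$ uniformly in $\tau>\tau_0$, which is exactly what is needed.

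For the continuous part, fix $\beta<3-\tfrac12(1+\tfrac1\nu)$ and choose $\epsilon>0$ with $\beta+\epsilon\le 3-\tfrac12(1+\tfrac1\nu)$. The pointwise bound $|\underline{e}(\tau,\xi)|\le C_\nu\,\tau^{-3+\frac12(1+\frac1\nu)+\epsilon}\langle\xi\rangle^{-1}$ separates time and frequency, so after pulling out the time factor it suffices to check that $\langle\xi\rangle^{-1}\in L^p\cap L^2_\rho(\langle\xi\rangle^{1/4}d\mu)$. The $L^p$ bound is immediate because $\int_0^\infty\langle\xi\rangle^{-p}\,d\xi<\infty$ for $p>1$. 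For the weighted $L^2_\rho$ piece one uses the spectral measure asymptotics from Theorem~\ref{thm:spec}: near $\xi=0$ the integrand behaves like $\xi^{-1/2}$ (integrable), while for $\xi\gtrsim 1$ one has $\langle\xi\rangle^{1/4}\langle\xi\rangle^{-2}\rho(\xi)\sim \xi^{-5/4}$ (also integrable at infinity). Therefore
\[
\|\underline{e}(\tau,\cdot)\|_Y\lesssim \tau^{-3+\frac12(1+\frac{1}{\nu})+\epsilon},
\]
and multiplying by $\tau^\beta$ produces a bounded quantity, yielding $\|\underline{e}\|_{\mathcal{Y}^{\alpha,\beta}}<\infty$.

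There is no genuine obstacle in this lemma; the only point requiring a bit of care is confirming that the decay $\langle\xi\rangle^{-1}$ provided by Lemma~\ref{lem:inhomog} is enough to offset the $\xi^{1/4}$ growth from the weight together with the $\xi^{1/2}$ growth of the spectral density $\rho$ at infinity, and to beat the $\xi^{-1/2}$ singularity of $\rho$ at the origin. Both issues are settled by a direct computation with the asymptotics in Theorem~\ref{thm:spec}, so the proof is essentially a one-step reduction to Lemma~\ref{lem:inhomog}.
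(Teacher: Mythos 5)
Your proposal is correct and follows essentially the same route as the paper: reduce to the pointwise bounds of Lemma~\ref{lem:inhomog}, observe that the time decay immediately yields the required power of $\tau$ for both the discrete and continuous pieces, and then verify $\langle\cdot\rangle^{-1}\in Y$ by checking the $L^p$ norm and the weighted $L^2_\rho$ norm using the asymptotics of $\rho$ from Theorem~\ref{thm:spec} near $0$ and at infinity. The computations (integrability of $\xi^{-1/2}$ near $0$, and $\langle\xi\rangle^{1/4}\langle\xi\rangle^{-2}\rho(\xi)\lesssim\langle\xi\rangle^{-5/4}$ at infinity) match the paper's proof exactly.
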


\begin{proof}
The stated time decay (which implies the conditions on $\alpha$ and $\beta$) is an immediate
consequence of Lemma \ref{lem:inhomog}.
Based on the estimate in Lemma \ref{lem:inhomog} it therefore suffices to prove that
$\langle\cdot \rangle^{-1}\in Y$.
It is clear that $\langle \cdot \rangle^{-1}\in L^p(0,\infty)$ for $p$ large and for the
$L^2$-based component we distinguish between small and large $\xi$.
For small $\xi$ we recall that $\rho(\xi)\simeq \xi^{-\frac12}$ (Theorem \ref{thm:spec}) which
is integrable near $0$ and for large $\xi$ we have
\[ |\langle \xi \rangle^{-2}\langle \xi\rangle^{\frac14}\rho(\xi)|\lesssim \langle \xi \rangle^{-\frac54} \]
since $\rho(\xi)\simeq \xi^\frac12$ for $\xi\gtrsim 1$ again by Theorem \ref{thm:spec}.
\end{proof}

Lemma \ref{lem:inhomY} shows that $\underline{e} \in Y^{\frac73+\epsilon,\frac73+\epsilon}$ for a sufficiently small $\epsilon>0$
provided $\nu$ is sufficiently large
which we assume from now on.
Consequently, Lemma \ref{lem:estH} yields
\[ \mathcal{H} \underline{e}\in \mathcal{X}^{\frac73+\epsilon,\frac43+\epsilon-2\delta},\quad
\hat{\mathcal{D}}\mathcal{H} \underline{e} \in \mathcal{Y}^{\frac73+\epsilon,\frac43+\epsilon} \]
and thus, if we choose $\mathcal{X}^{\frac43-2\delta,\frac43-2\delta} \times
\mathcal{Y}^{\frac43,\frac43}$ as our solution
space, we even obtain smallness for the inhomogeneous term, i.e.,
\begin{equation}
\label{eq:He}
\|\mathcal{H}\underline{e}\|_{\mathcal{X}^{\frac43-2\delta,\frac43-2\delta}}\lesssim \nu^{2\delta}\tau_0^{-\epsilon},\quad
\|\hat{\mathcal{D}}\mathcal{H} \underline{e}\|_{\mathcal{Y}^{\frac43,\frac43}}\lesssim \tau_0^{-\epsilon}.
\end{equation}

By applying the operator $\mathcal{H}$ followed by $\hat{\mcD}$ to Eq.~\eqref{eq:iterate}, we obtain
\begin{equation}
\label{eq:iterate2}
\hat{\mcD}\mbx=\hat{\mcD}\mcH \mbe-2\hat{\mcD}\mcH \beta \mcK \hat{\mcD} \mbx.
\end{equation}
Solving this equation for $\hat {\mcD}\mbx \in \mcY^{\frac43,\frac43}$ amounts to proving existence
(and boundedness) of the operator
$(1+2\hat{\mcD}\mcH \beta \mcK)^{-1}$ on $\mcY^{\frac43,\frac43}$.
As in \cite{KST} and \cite{DonKri12} we write
\[  \mcK=\left (\begin{array}{cc}\mcK_{dd} & \mcK_{dc} \\
\mcK_{cd} & \mcK_{cc} \end{array} \right ) \]
for the matrix components of $\mcK$.
With this notation we have
\begin{equation}
\label{eq:matrixDHK}
\hat{\mcD}\mcH \beta \mcK= \left (\begin{array}{cc}\hat{\mcD}_d \mcH_d & 0 \\
0 & \hat{\mcD}_c \mcH_c \end{array} \right )
\left (\begin{array}{cc}\beta \mcK_{dd} & \beta \mcK_{dc} \\
\beta \mcK_{cd} & \beta \mcK_{cc} \end{array} \right )
\end{equation}
where $\hat{\mcD}_d$ is just $\partial_\tau$.
We start by inverting the diagonal elements of $1+2\hat{\mcD}\mcH \beta \mcK$.
Since $\mcK_{dd}$ is a linear map from $\C$ to $\C$, it is just given by a number (to be precise,
we have $\mcK_{dd}a=-\frac32 a$ for all $a\in \C$, see \cite{DonKri12}).
Furthermore, by Lemma \ref{lem:estH} we have
\[ |\hat {\mcD}_d \mcH_d \beta \mcK_{dd}x_d(\tau)|\lesssim \tau^{-1}|x_d(\tau)|\leq \tau_0^{-1}|x_d(\tau)| \]
since $\beta(\tau)\simeq \tau^{-1}$.
This shows that $(1+2\hat{\mcD}_d \mcH_d \beta \mcK_{dd})^{-1}$ exists.

\subsection{Structure and properties of $\mcK$}
In order to proceed, we need more detailed information on the operator $\mcK$.
The operator $\mcK$ has been analysed in detail in \cite{KST} and \cite{DonKri12}.
It is easy to see that $\mcK_{cd}: \C \to L^2_\rho(0,\infty)$ is given by
\[ \mcK_{cd}a(\xi)=\frac{a}{\|\phi(\cdot,\xi_d)\|_{L^2(0,\infty)}^2}\int_0^\infty \phi(R,\xi)[R\partial_R-1]\phi(R,\xi_d)dR \]
with $\phi$ from Theorem \ref{thm:spec}.
For $\mcK_{dc}$ and $\mcK_{cc}$
we recall the following result.

\begin{thm}
\label{thm:K}
The operator $\mcK_{cc}: L^2_\rho(0,\infty)\to L^2_\rho(0,\infty)$ is given by
\[ \mcK_{cc}f(\xi)=\int_0^\infty K_{cc}(\xi,\eta)f(\eta)d\eta \]
where the kernel $K_{cc}$ is of the form
\[ K_{cc}(\xi,\eta)=\frac{\rho(\eta)}{\xi-\eta}F(\xi,\eta) \]
with a symmetric function $F \in C^2((0,\infty)\times (0,\infty))$.
Furthermore, for any $N\in \mathbb{N}$, $F$ satisfies the bounds
  \[\begin{split}
   | F(\xi,\eta)| &\leq C_N \left\{ \begin{array}{cc} \xi+\eta &
       \xi+\eta \leq 1 \cr (\xi+\eta)^{-1} (1+|\xi^\frac12
       -\eta^\frac12|)^{-N} & \xi+\eta \geq 1
     \end{array} \right.\\
   | \partial_{\xi} F(\xi,\eta)|+| \partial_{\eta} F(\xi,\eta)| &\leq C_N \left\{
     \begin{array}{cc} 1 & \xi+\eta \leq 1 \cr (\xi+\eta)^{-\frac32}
       (1+|\xi^\frac12 -\eta^\frac12|)^{-N} & \xi+\eta \geq 1
     \end{array} \right.\\
  \max_{j+k=2} | \partial^j_{\xi}\partial^k_{\eta} F(\xi,\eta)| &\leq C_N \left\{
     \begin{array}{cc} (\xi+\eta)^{-\frac12} & \xi+\eta \leq 1 \cr
       (\xi+\eta)^{-2} (1+|\xi^\frac12 -\eta^\frac12|)^{-N} &
       \xi+\eta \geq 1
     \end{array} \right. .
     \end{split}
   \]
Finally, the operator $\mcK_{dc}: L^2_\rho(0,\infty)\to \C$ is of the form
\[ \mcK_{dc}f=\int_0^\infty K_{dc}(\xi)f(\xi)\rho(\xi)d\xi \]
with a smooth and rapidly decreasing function $K_{dc}$.
\end{thm}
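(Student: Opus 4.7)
The plan is to treat this as a (quoted) spectral-theoretic fact about the conjugation of the dilation generator by the distorted Fourier transform of $\mathcal{L}=-\partial_R^2-5W^4(R)$. The essential observation is that, by definition of the scaling-to-Fourier correspondence,
\[
\mathcal{K}_{cc}=\mathcal{F}\,(R\partial_R-1)\,\mathcal{F}^{-1}-\mathcal{A}_c,
\]
so the first step is to derive an explicit integral representation for its Schwartz kernel. I would pair $\phi(\cdot,\xi)$ with $(R\partial_R-1)\phi(\cdot,\eta)$ and exploit the commutator identity
\[
[\mathcal{L},R\partial_R]=2\mathcal{L}+10W^4(R)+5R(W^4)'(R),
\]
combined with the eigenvalue relations $\mathcal{L}\phi(\cdot,\xi)=\xi\phi(\cdot,\xi)$, to obtain, after two integrations by parts,
\[
(\xi-\eta)\!\int_0^\infty\!\phi(R,\xi)(R\partial_R-1)\phi(R,\eta)\,dR=\int_0^\infty\!\phi(R,\xi)\bigl(10W^4(R)+5R(W^4)'(R)\bigr)\phi(R,\eta)\,dR.
\]
Subtracting the contribution of $\mathcal{A}_c$ (which accounts for the Dirac-type diagonal piece and the spectral weight $\rho$) and isolating the factor $\rho(\eta)$ then produces the advertised structure $K_{cc}(\xi,\eta)=\rho(\eta)F(\xi,\eta)/(\xi-\eta)$ with
\[
F(\xi,\eta)\;=\;\int_0^\infty \phi(R,\xi)\,V_1(R)\,\phi(R,\eta)\,dR,\qquad V_1(R):=10W^4(R)+5R(W^4)'(R).
\]
Symmetry of $F$ is then manifest.

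Next I would establish the pointwise bounds on $F$ by splitting into the two regimes dictated by the asymptotics of $\phi$ in Theorem~\ref{thm:spec}. In the low-frequency regime $\xi+\eta\le 1$, the behaviour $\phi(R,\xi)=\phi_0(R)[1+O(\langle R\rangle^2\xi)]$ for $R\lesssim\xi^{-1/2}$ together with the fast decay $V_1(R)=O(\langle R\rangle^{-4})$ and $\phi_0(0)=0$ yields $F(\xi,\eta)=O(\xi+\eta)$, and differentiating under the integral gives the stated bounds $|\partial F|\lesssim 1$ and $|\partial^2 F|\lesssim(\xi+\eta)^{-1/2}$ by tracking one additional power of $R$ per derivative (integrable against $V_1$). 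In the high-frequency regime $\xi+\eta\ge 1$, I would insert the oscillatory representation $\phi(R,\xi)\simeq\xi^{-1/2}(e^{i\sqrt\xi R}-e^{-i\sqrt\xi R})/(2i)$ and split $F$ into four terms. The two ``$++$'' and ``$--$'' pieces have phase $e^{\pm i(\sqrt\xi+\sqrt\eta)R}$ and decay rapidly by $N$-fold integration by parts against $V_1$; the two ``$+-$'' and ``$-+$'' pieces have phase $e^{\pm i(\sqrt\xi-\sqrt\eta)R}$, and the same integration by parts gives the factor $(1+|\sqrt\xi-\sqrt\eta|)^{-N}$, together with the overall size $(\xi+\eta)^{-1}$ from the two prefactors $\xi^{-1/2}\eta^{-1/2}$. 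Differentiating in $\xi$ or $\eta$ brings down a power of $R$, which costs one further $(\sqrt\xi+\sqrt\eta)^{-1}\simeq(\xi+\eta)^{-1/2}$ in the integration-by-parts argument, producing the claimed $|\partial F|\lesssim (\xi+\eta)^{-3/2}(1+|\sqrt\xi-\sqrt\eta|)^{-N}$ and the second-derivative bound. Uniform symbol-type behaviour of the $O$-terms in the expansion of $\phi$ ensures that the integrations by parts can be iterated arbitrarily often.

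For $\mathcal{K}_{dc}$ the kernel is
\[
K_{dc}(\xi)=\frac{1}{\|\phi(\cdot,\xi_d)\|_{L^2}^{2}}\int_0^\infty\phi(R,\xi_d)\,(R\partial_R-1)\phi(R,\xi)\,dR.
\]
Since $\phi(\cdot,\xi_d)$ decays exponentially, the same oscillatory-integral argument, but now with an exponentially localised weight rather than an algebraically decaying $V_1$, immediately yields smoothness of $K_{dc}$ together with decay faster than any polynomial; one simply integrates by parts as many times as needed against the oscillatory factor $e^{\pm i\sqrt\xi R}$, each step gaining $\xi^{-1/2}$ and losing a derivative that falls on the Schwartz-class function $\phi(\cdot,\xi_d)$.

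The main analytic obstacle is the high-frequency estimate on $F$: one must keep track uniformly of the symbol-class remainders in the asymptotics of $\phi$ when repeatedly integrating by parts, and must carefully separate the ``non-stationary'' phase contributions (giving rapid decay in $\sqrt\xi+\sqrt\eta$) from the ``near-stationary'' ones (giving decay only in $|\sqrt\xi-\sqrt\eta|$); the remainder of the argument, while laborious, is routine once these oscillatory estimates are in place. I would therefore largely import this structural result from \cite{KST} and \cite{DonKri12}, where it is established in essentially the form needed here, checking only that the bounds on $V_1$ and on $\phi$ used above match the statements of Theorem~\ref{thm:spec}.
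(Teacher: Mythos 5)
The paper does not actually prove Theorem~\ref{thm:K}; its ``proof'' is simply the citation \emph{See \cite{KST}, Theorem 5.1}. Your proposal correctly reconstructs the structure of that cited argument --- the commutator identity $[\mathcal{L},R\partial_R]=2\mathcal{L}+10W^4+5R(W^4)'$, the resulting symmetric representation of $F$, the low-frequency estimate via $\phi_0(0)=0$ and the decay of $V_1$, and the high-frequency estimate via stationary/non-stationary phase --- and then, like the paper, ultimately defers the detailed symbol-calculus bookkeeping to \cite{KST} and \cite{DonKri12}; so you are taking essentially the same route, with the added benefit of an explicit outline.
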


\begin{proof}
See \cite{KST}, Theorem 5.1.
\end{proof}

As a consequence of Theorem \ref{thm:K} we have the following mapping properties of $\mcK_{cc}$
and the commutator $[\mcA_c,\mcK_{cc}]$ where we recall that
\[ \mcA_c f(\xi)=-2\xi f'(\xi)-(\tfrac52+\tfrac{\xi\rho'(\xi)}{\rho(\xi)})f(\xi). \]
\begin{prop}
\label{prop:KAKc}
We have the bounds
\begin{align*}
\|\mcK_{cc}f\|_X &\lesssim \|f\|_X & \|[\mcA_c,\mcK_{cc}]f\|_X&\lesssim \|f\|_X \\
\|\mcK_{cc}f\|_Y &\lesssim \|f\|_X & \|[\mcA_c,\mcK_{cc}]f\|_Y&\lesssim \|f\|_X \\
\|\mcK_{cc}g\|_Y &\lesssim \|g\|_Y & \|[\mcA_c,\mcK_{cc}]g\|_Y&\lesssim \|g\|_Y
\end{align*}
for all $f\in X$ and $g\in Y$.
\end{prop}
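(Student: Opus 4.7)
My plan is to reduce both bounds to bounds on operators of the same form as $\mathcal{K}_{cc}$ and then invoke the singular-integral machinery developed in \cite{KST}, with attention to the weights in the $X, Y$ norms. The strategy closely follows the templates in \cite{KST} and \cite{DonKri12}, so the main task is to identify the correct kernel structure for $[\mathcal{A}_c, \mathcal{K}_{cc}]$ and to extract the right smoothing gain from Theorem \ref{thm:K}.

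First, I would reduce the commutator to the same type as $\mathcal{K}_{cc}$. Writing $\mathcal{A}_c = -2\xi\partial_\xi - m(\xi)$ with $m(\xi) = \tfrac52 + \xi\rho'(\xi)/\rho(\xi)$, an integration by parts in $\eta$ yields the kernel
\[
K_{[\mathcal{A}_c,\mathcal{K}_{cc}]}(\xi,\eta) = -2\bigl(\xi\partial_\xi + \eta\partial_\eta\bigr)K_{cc}(\xi,\eta) - 2K_{cc}(\xi,\eta) + \bigl(m(\eta)-m(\xi)\bigr)K_{cc}(\xi,\eta).
\]
The key algebraic observation is that $(\xi\partial_\xi + \eta\partial_\eta)[1/(\xi-\eta)] = -1/(\xi-\eta)$, so that differentiating $K_{cc} = \rho(\eta) F(\xi,\eta)/(\xi-\eta)$ in this direction preserves the pole structure. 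Combined with the derivative bounds on $F$ in Theorem \ref{thm:K} (and the smoothness of $m$, which makes $m(\eta)-m(\xi) = O(\xi-\eta)$ locally), this shows that $[\mathcal{A}_c,\mathcal{K}_{cc}]$ has a kernel of the form $\rho(\eta)\widetilde F(\xi,\eta)/(\xi-\eta)$ with $\widetilde F$ obeying the same qualitative estimates as $F$ (small-$\xi$ vanishing $\widetilde F(\xi,\eta) = O(\xi+\eta)$ and large-$\xi$ decay $(\xi+\eta)^{-1}(1+|\sqrt\xi-\sqrt\eta|)^{-N}$ up to acceptable losses). Hence it suffices to prove the three displayed bounds with $\mathcal{K}_{cc}$ in place of both operators.

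Next, I would decompose $K_{cc}$ into a near-diagonal piece $K_{cc}^{\mathrm{near}}$ supported on $|\xi-\eta| \lesssim \min(\xi+\eta,1)$ and a far-diagonal piece $K_{cc}^{\mathrm{far}}$. On the far-diagonal region the pole is harmless, and the decay $(1+|\sqrt\xi-\sqrt\eta|)^{-N}$ supplies a kernel for which Schur's test delivers boundedness on $L^p$ and on $L^2_\rho$ (the weights $\langle\xi\rangle^{1/8}$, $|\xi|^{1/2-\delta}$, etc.\ being absorbed by choosing $N$ large enough). On the near-diagonal region, the substitution $\xi = u^2$, $\eta = v^2$ rewrites the kernel as $v\rho(v^2)F(u^2,v^2)/[(u+v)(u-v)]$, which after a smooth truncation is a Hilbert-type kernel in $(u,v)$ with a smooth symmetric symbol. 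Boundedness on $L^p$ then reduces to $L^p$-boundedness of the truncated Hilbert transform on $\R$, via a change of variables, and $L^2_\rho$-boundedness follows after conjugating by $\rho^{1/2}$, since $\rho^{1/2}(v^2)/\rho^{1/2}(u^2)$ stays bounded on the diagonal region.

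Finally, the smoothing bound $\|\mathcal{K}_{cc} f\|_Y \lesssim \|f\|_X$ (and its analogue for the commutator) is the most subtle point, since $Y$ lacks the small-$\xi$ weights $|\xi|^{1/2-\delta}$ and $|\xi|^{1/2}$ present in $X$. The gain comes from $|F(\xi,\eta)| \lesssim \xi + \eta$ on $\{\xi+\eta \le 1\}$: dividing by $|\xi-\eta|$ leaves a factor bounded by $(\xi+\eta)/|\xi-\eta|$, which together with $\rho(\eta) \sim \eta^{-1/2}$ converts inputs with the small-$\xi$ weight of $X$ into outputs with the (unweighted) norm of $Y$, via a Schur-type estimate. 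The large-$\xi$ contribution is handled uniformly since $X$ and $Y$ differ only by $\langle\cdot\rangle^{1/8}$ there, which is absorbed by the off-diagonal decay. The only step I expect to be genuinely technical is verifying that $\widetilde F$ for the commutator retains enough of the small-$\xi$ vanishing to still yield the smoothing bound; here one uses that the scaling vector field $\xi\partial_\xi + \eta\partial_\eta$ applied to $F$ still produces $O(\xi+\eta)$ near the origin, thanks to the bound on $\partial_\xi F + \partial_\eta F$ in Theorem \ref{thm:K}.
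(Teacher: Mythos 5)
Your proposal is correct and follows essentially the same approach as the paper, whose proof simply cites \cite{DonKri12}, Propositions 5.5 and 5.8, for the harmonic-analysis estimates and notes via \cite{KST}, p.~52, that the commutator kernel is again of the form $\rho(\eta)\widetilde F(\xi,\eta)/(\xi-\eta)$ with $\widetilde F$ built from $F$ and the scaling vector field $\xi\partial_\xi+\eta\partial_\eta$. Your independent derivation of the commutator kernel (using $(\xi\partial_\xi+\eta\partial_\eta)[(\xi-\eta)^{-1}]=-(\xi-\eta)^{-1}$), together with the near/far-diagonal decomposition, Schur test, truncated-Hilbert-transform reduction, and the smoothing gain from the small-frequency vanishing of $F$, faithfully reproduces the argument behind the cited propositions.
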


\begin{proof}
This follows from the representation in Theorem \ref{thm:K} but requires some harmonic analysis.
We refer the reader to \cite{DonKri12}, Propositions 5.5 and 5.8 for the proof.
We remark that the bounds for $[\mcA_c,\mcK_{cc}]$ can be obtained in the same fashion as
the ones for $\mcK_{cc}$ by
noting that the kernel of $[\mcA_c,\mcK_{cc}]$ is of the form
\[ \frac{\rho(\eta)}{\xi-\eta}\tilde F(\xi,\eta) \]
with
\[\tilde F(\xi,\eta)=\frac{\eta\rho'(\eta)}{\rho(\eta)}F(\xi,\eta)+[\xi
\partial\xi+\eta \partial_\eta]F(\xi,\eta), \]
see \cite{KST}, p.~52.
\end{proof}

In what follows it is necessary to split the operator $\mcK_{cc}$ into a diagonal and an off-diagonal
part. Thus, for $n_0\in \N$ we set
\[ K_{n_0}^d(\xi,\eta)=\chi \left (n_0(\tfrac{\xi}{\eta}-1)\right )K_{cc}(\xi,\eta) \]
where $\chi$ is a standard smooth cut-off with $\chi(x)=1$ for $|x|\leq 1$ and $\chi(x)=0$ for $|x|\geq 2$.
Furthermore, we denote by $\mcK_{n_0}^d$ the corresponding operator and write
\[ \mcK_{n_0}^{nd}:=\mcK_{cc}-\mcK_{n_0}^d \]
for the off-diagonal part.
First, we establish a smoothing estimate for the off-diagonal part at small frequencies.

\begin{lem}
\label{lem:Kccnd}
With $p$ from Definition \ref{def:spaces}, we have the bounds
\begin{align*}
\| \, |\cdot|^{-\frac{1}{2p}}\langle \cdot \rangle^{\frac{1}{2p}} \mcK_{n_0}^{nd}f\|_Y&\lesssim n_0^2\|f\|_Y \\
\| \, |\cdot|^{-\frac{1}{2p}}\langle \cdot \rangle^{\frac{1}{2p}} \mcK_{n_0}^{nd}f\|_X&\lesssim n_0^4\|f\|_X
\end{align*}
for all $n_0\in \N$, $n_0\geq 100$.
\end{lem}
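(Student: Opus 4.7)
The plan is to exploit the geometric observation that on the support of $1-\chi(n_0(\xi/\eta-1))$ one has $|\xi-\eta|\gtrsim \max(\xi,\eta)/n_0$. Combined with the representation $K_{cc}(\xi,\eta)=\rho(\eta)F(\xi,\eta)/(\xi-\eta)$ from Theorem \ref{thm:K}, this turns the principal-value kernel into an absolutely integrable one,
\[
|K_{n_0}^{nd}(\xi,\eta)|\lesssim n_0\,\frac{\rho(\eta)\,|F(\xi,\eta)|}{\max(\xi,\eta)}.
\]
The problem then reduces to a Schur-test exercise, where the powers of $n_0$ must be tracked through the near-diagonal region $|\xi-\eta|\sim \max(\xi,\eta)/n_0$ which is the only place the $n_0$ factors originate.

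For the $Y$-bound I would split $(\xi,\eta)$ into three regimes: $\{\xi+\eta\leq 1\}$, $\{\xi+\eta\gtrsim 1,\;\xi\sim\eta\}$, and $\{\xi+\eta\gtrsim 1,\;\xi\not\sim\eta\}$, and in each apply Schur's test. In the low-frequency regime $|F|\lesssim \xi+\eta$ and $\rho(\eta)\sim \eta^{-1/2}$ combine with the output weight $|\xi|^{-1/(2p)}\langle\xi\rangle^{1/(2p)}$ to give an integrable row and column sum, with one factor of $n_0$ from the kernel bound and a second factor arising from the effective diagonal width $\sim \max(\xi,\eta)/n_0$. In the high-frequency regimes the strong decay $(1+|\sqrt\xi-\sqrt\eta|)^{-N}$ renders the kernel essentially diagonal in the $\sqrt{\cdot}$-variable, and Schur immediately applies on the $L^p$-side; the $L^2_\rho$-side is controlled analogously, with the weight $\langle\cdot\rangle^{1/8}$ absorbed by the $N$-fold decay. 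This produces the stated $n_0^2$.

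For the $X$-bound I would conjugate by the weights $w_p(\xi):=(|\xi|\langle\xi\rangle^{-1})^{1/2-\delta}$ and $w_2(\xi):=|\xi|^{1/2}\langle\xi\rangle^{1/8}$, and estimate the modified kernels $w_p(\xi)w_p(\eta)^{-1}K_{n_0}^{nd}(\xi,\eta)$ and $w_2(\xi)w_2(\eta)^{-1}K_{n_0}^{nd}(\xi,\eta)$. On the off-diagonal support these weight ratios are bounded by $(\max(\xi,\eta)/\min(\xi,\eta))^{1/2-\delta}$, which, on the dangerous near-diagonal piece $|\xi/\eta-1|\sim 1/n_0$, contributes at most $n_0^{1/2-\delta}$. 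Repeating the Schur analysis with these enlarged kernels then yields $n_0^4$ rather than $n_0^2$, with the extra powers accounting for the weight conjugation on both the input and output sides.

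The main obstacle will be the precise bookkeeping of the $n_0$ powers, ensuring that the smoothing gain $|\xi|^{-1/(2p)}\langle\xi\rangle^{1/(2p)}$ at the output genuinely survives all the weight manipulations and is not silently cancelled by the input weights from $X$. A secondary subtlety is the $L^2_\rho$ estimate near $\xi=0$, where $\rho(\xi)\sim \xi^{-1/2}$ is singular: one must verify that the smoothing weight compensates not only for the kernel but also for the measure, which amounts to checking an appropriately weighted Schur condition in the $L^2_\rho$-geometry rather than the flat $L^2(d\xi)$-geometry.
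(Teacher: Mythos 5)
Your opening observation is the right one and is exactly the paper's starting point: on $\supp(1-\chi(n_0(\xi/\eta-1)))$ one has $|\xi-\eta|\gtrsim n_0^{-1}(\xi+\eta)$ and, equally importantly, $|\xi^{1/2}-\eta^{1/2}|\gtrsim n_0^{-1/2}\max(\xi^{1/2},\eta^{1/2})$. After that, however, the bookkeeping of the $n_0$-powers that you sketch does not hold up, and it is precisely this bookkeeping that the lemma is about.

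First, the "effective diagonal width $\sim\max(\xi,\eta)/n_0$" is not a source of an extra $n_0$ in the $Y$-bound. The operator $\mcK_{n_0}^{nd}$ is the \emph{off-diagonal} part; its kernel is supported on $\{|\xi/\eta-1|\geq n_0^{-1}\}$, which is not a thin strip but the complement of one, of full measure. In the low-frequency region $\xi+\eta\leq 1$ the pointwise bound is genuinely only $|K_{n_0}^{nd}(\xi,\eta)|\lesssim n_0\,\eta^{-1/2}$ (one factor of $n_0$, from $|\xi-\eta|^{-1}\lesssim n_0(\xi+\eta)^{-1}$ combined with $|F|\lesssim \xi+\eta$). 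The second power of $n_0$ enters only in the high-frequency region $\xi+\eta\geq 1$, where one additionally converts the decay $(1+|\xi^{1/2}-\eta^{1/2}|)^{-N}$ from Theorem \ref{thm:K} into $\xi,\eta$-decay via $|\xi^{1/2}-\eta^{1/2}|\gtrsim n_0^{-1/2}\max(\xi^{1/2},\eta^{1/2})$, picking up $n_0^{N/2}$. With $N$ chosen to give enough integrable decay against the $Y$-weights this yields $n_0^2$; nothing comes from any "width".

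Second, the mechanism you propose for the $X$-bound is incorrect. You claim the conjugated-weight ratios are bounded on the off-diagonal support and that "on the dangerous near-diagonal piece $|\xi/\eta-1|\sim 1/n_0$ [they] contribute at most $n_0^{1/2-\delta}$." Near $|\xi/\eta-1|\sim 1/n_0$ one has $\max(\xi,\eta)/\min(\xi,\eta)=1+O(1/n_0)$, so the weight ratio $(\max/\min)^{1/2-\delta}$ is $\sim 1$, not $n_0^{1/2-\delta}$; and far from the diagonal ($\xi\gg\eta$ or $\eta\gg\xi$, which is \emph{inside} the off-diagonal support) the ratio is unbounded, so there is no uniform $n_0$-bound for it at all. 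The extra $n_0^2$ separating the $X$- from the $Y$-estimate does not come from the weights but from a stronger use of the same mechanism: because the $X$-weights (in particular $|\cdot|^{1/2}\langle\cdot\rangle^{1/8}$ in the $L^2_\rho$ component) demand more decay in $\xi$, one uses a larger $N$ in the bound for $F$ from Theorem \ref{thm:K}, which produces the pointwise bound $|K_{n_0}^{nd}(\xi,\eta)|\lesssim n_0^4\langle\xi\rangle^{-2}\langle\eta\rangle^{-2}$ on $\xi+\eta\geq 1$, and the rest follows by the same mixed-norm (Hölder/Young) kernel estimate, not by conjugation.

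So the high-level plan (pointwise kernel bound via the separation, then a Schur/Hölder argument) is the right one and is what the paper does; but as written, both your stated sources of the $n_0$-powers are spurious, and if you tried to execute the plan literally you would not recover the exponents $n_0^2$ and $n_0^4$. You need to replace the "diagonal-width" and "weight-ratio" heuristics by the correct one: the $n_0$-powers come solely from the two geometric inequalities above fed into the explicit decay estimates for $F$ in Theorem \ref{thm:K}, with the choice of $N$ tuned to the weight in $Y$ (giving $n_0^2$) and to the weight in $X$ (giving $n_0^4$).
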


\begin{proof}
Explicitly, the operator $\mcK_{n_0}^{nd}$ is given by
\[ \mcK_{n_0}^{nd}f(\xi)=\int_0^\infty K_{n_0}^{nd}(\xi,\eta)f(\eta)d\eta \]
with
\[ K_{n_0}^{nd}(\xi,\eta)=\left [1-\chi \left (n_0(\tfrac{\xi}{\eta}-1)\right )
\right ] K_{cc}(\xi,\eta). \]
Note that on the support of $K_{n_0}^{nd}$ we have $|\frac{\xi}{\eta}-1|\geq \frac{1}{n_0}$
and thus, either $\eta\leq \frac{1}{1+\frac{1}{n_0}}\xi$ or $\eta\geq \frac{1}{1-\frac{1}{n_0}}\xi$.
In the former case we obtain
\[ \xi-\eta\geq \left (1-\tfrac{1}{1+\frac{1}{n_0}}\right )\xi\simeq \tfrac{1}{n_0}\xi
\gtrsim \tfrac{1}{n_0}(\xi+\eta) \]
and in the latter, $\eta-\xi\gtrsim \frac{1}{n_0}(\xi+\eta)$.
Thus, we have $|\xi-\eta|\gtrsim \frac{1}{n_0}|\xi+\eta|$ and from Theorem \ref{thm:K} we obtain
the bound
$|K_{n_0}^{nd}(\xi,\eta)|\lesssim n_0 \eta^{-\frac12}$ provided $\xi+\eta\leq 1$.
If $\xi+\eta\geq 1$ we note that, as before, $|\xi^\frac12-\eta^\frac12|\gtrsim n_0^{-\frac12}\xi^\frac12$
and also $|\xi^\frac12-\eta^\frac12|\gtrsim n_0^{-\frac12}\eta^\frac12$.
Thus, from Theorem \ref{thm:K} we obtain the bound
$|K_{n_0}^{nd}(\xi,\eta)|\lesssim n_0^2 \langle \xi \rangle^{-1}\langle \eta \rangle^{-2}$.
We conclude that
\[ |\tilde K(\xi,\eta)|:=|\xi^{-\frac{1}{2p}}\langle \xi \rangle^{\frac{1}{2p}}K_{n_0}^{nd}(\xi,\eta)|\lesssim
n_0^2\xi^{-\frac{1}{2p}}\langle \xi \rangle^{\frac{1}{2p}}
\langle \xi\rangle^{-1}\eta^{-\frac12}\langle \eta \rangle^\frac12
\langle \eta \rangle^{-\frac32} \]
for all $\xi,\eta\geq 0$ and thus,
\[ \|\tilde K\|_{L^{p}(0,\infty)L^{p'}(0,\infty)}\lesssim n_0^2 \]
which implies
\[ \| \, |\cdot|^{-\frac{1}{2p}}\langle \cdot \rangle^{\frac{1}{2p}} \mcK_{n_0}^{nd}f\|_{L^p(0,\infty)}
\lesssim n_0^2\|f\|_{L^p(0,\infty)}. \]
For the weighted $L^2$-component we estimate
\begin{align*}
|\tilde K(\xi,\eta)|:&=|\xi^{-\frac{1}{2p}}\langle \xi \rangle^{\frac{1}{2p}}\langle \xi\rangle^\frac18
\rho(\xi)^\frac12 K_{n_0}^{nd}(\xi,\eta)\langle \eta \rangle^{-\frac18}\rho(\eta)^{-\frac12}| \\
&\lesssim n_0^2 \xi^{-\frac{1}{2p}-\frac14}\langle \xi \rangle^{\frac{1}{2p}+\frac14}
\langle \xi \rangle^{-\frac58}\eta^{-\frac14}\langle \eta \rangle^\frac14 \langle \eta \rangle^{-\frac{15}{8}}
\end{align*}
which implies $\|\tilde K\|_{L^2(0,\infty)L^2(0,\infty)}\lesssim n_0^2$ and the claim follows.

For the second bound we proceed completely analogous with the exception that the
$L^2$-component gets estimated in a slightly different way and we use the stronger bound
$|K(\xi,\eta)|\lesssim n_0^4 \langle \xi\rangle^{-2}\langle \eta\rangle^{-2}$ from Theorem
\ref{thm:K}.
With
\[ \tilde K(\xi,\eta):=\xi^{-\frac{1}{2p}}\langle \xi\rangle^{\frac{1}{2p}}
\xi^{\frac12-\delta}\langle \xi\rangle^{-\frac12+\delta}K_{n_0}^{nd}(\xi,\eta)\eta^{-\frac12+\delta}
\langle \eta \rangle^{\frac12-\delta}
\]
we obtain $\|\tilde K\|_{L^p(0,\infty)L^{p'}(0,\infty)}\lesssim n_0^4$ provided
$(-1+\delta)p'>-1$ which we may safely assume since $p$ is supposed to be large.
For the $L^2$-component in the second bound we consider the kernel
\[ \tilde K(\xi,\eta):=\xi^{-\frac{1}{2p}}\langle \xi \rangle^{\frac{1}{2p}}
\xi^\frac12 \langle \xi\rangle^\frac18 \rho(\xi)^\frac12 K_{n_0}^{nd}(\xi,\eta)
\eta^{-\frac12+\delta}\langle \eta \rangle^{\frac12-\delta} \]
which satisfies the bound
\[ |\tilde K(\xi,\eta)|\lesssim n_0^4 \xi^{-\frac{1}{2p}-\frac14}\langle \xi\rangle^{\frac{1}{2p}+\frac14}
\langle \xi\rangle^{-\frac98}\eta^{-1+\delta}\langle \eta \rangle^{1-\delta}\langle \eta\rangle^{-2}. \]
This implies the bound $\|\tilde K\|_{L^2(0,\infty)L^{p'}(0,\infty)}\lesssim n_0^4$ which
concludes the proof.
\end{proof}

We also need a corresponding smoothing property for the diagonal part.
Here it is crucial for the following that the obtained bound does not depend on $n_0$.
We start with an estimate for a truncated version of the Hilbert transform.

\begin{lem}
\label{lem:HT}
Let $H_n$, $n\in \N$, be given by
\[ H_n f(\xi):=\int_0^\infty \frac{\chi(n(\tfrac{\xi}{\eta}-1))}{\xi-\eta}f(\eta)d\eta,
\quad \xi\geq 0 \]
where $\chi$ is a smooth cut-off function satisfying $\chi(x)=1$ for $|x|\leq 1$
and $\chi(x)=0$ for $|x|\geq 2$.
Then $H_n$ extends
to a bounded operator on $L^q(0,\infty)$ for any $q\in (1,\infty)$ and we have
\[ \|H_n f\|_{L^q(0,\infty)}\lesssim \|f\|_{L^q(0,\infty)} \]
for all $f \in L^q(0,\infty)$ and all $n \geq 100$.
\end{lem}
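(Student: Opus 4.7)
My plan is to view $H_n$ as a disguised truncated Hilbert transform: I reduce it to a convolution on $\mathbb R$ by a logarithmic change of variables, isolate the Hilbert-type singularity at the origin from an $L^1$ remainder, and handle the singular piece by scaling together with the classical Calder\'on--Zygmund theorem. First I substitute $\eta=\xi e^{-u}$, which turns $\xi-\eta$ into $\xi(1-e^{-u})$ and $\xi/\eta-1$ into $e^u-1$, so that (interpreting the $1/(\xi-\eta)$ singularity in the principal-value sense)
\[
H_n f(\xi) = \mathrm{p.v.}\int_{\mathbb R}\frac{\chi(n(e^u-1))}{e^u-1}\,f(\xi e^{-u})\,du.
\]
Conjugating by the Mellin-type isometry $\mathcal M\colon L^q(0,\infty)\to L^q(\mathbb R)$, $(\mathcal M f)(s):=e^{s/q}f(e^s)$, a direct computation identifies $\mathcal M\circ H_n\circ \mathcal M^{-1}$ with the convolution operator on $\mathbb R$ whose kernel is
\[
\tilde K_n(u) := \frac{\chi(n(e^u-1))\,e^{u/q}}{e^u-1}.
\]
The claim thus reduces to the uniform-in-$n$ bound $\|\tilde K_n\ast g\|_{L^q(\mathbb R)}\lesssim \|g\|_{L^q(\mathbb R)}$ for $g$ in a dense subspace.

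Next I decompose $\tilde K_n=k_n+R_n$ with $k_n(u):=\chi(nu)/u$; both kernels are supported in $|u|\lesssim 1/n$. From the identity
\[
\tilde K_n(u)-k_n(u)=\chi(n(e^u-1))\Bigl(\tfrac{e^{u/q}}{e^u-1}-\tfrac1u\Bigr)+\frac{\chi(n(e^u-1))-\chi(nu)}{u},
\]
a Taylor expansion of $e^u$ about $0$ shows the first bracket is $O(1)$ uniformly, while $|\chi(n(e^u-1))-\chi(nu)|\le\|\chi'\|_\infty\,n\,|e^u-1-u|\lesssim n u^2$. Hence $|R_n(u)|\lesssim 1+n|u|$ on a set of measure $O(1/n)$, which yields $\|R_n\|_{L^1(\mathbb R)}\lesssim 1/n$ uniformly in $n\ge 100$, so that convolution with $R_n$ is bounded on $L^q(\mathbb R)$ by Young's inequality.

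For the singular piece $k_n$, the substitution $u=v/n$ in the convolution integral gives $(k_n\ast g)(s)=(k_1\ast h)(ns)$ with $h(x):=g(x/n)$. Accounting for the $L^q$-Jacobians $\|h\|_{L^q}=n^{1/q}\|g\|_{L^q}$ and $\|F(n\,\cdot)\|_{L^q}=n^{-1/q}\|F\|_{L^q}$, this gives $\|k_n\ast\|_{L^q\to L^q}=\|k_1\ast\|_{L^q\to L^q}$, independent of $n$. The kernel $k_1(u)=\chi(u)/u$ is odd (since $\chi$ is even), smooth off the origin, compactly supported, and satisfies $|k_1(u)|\lesssim |u|^{-1}$, $|k_1'(u)|\lesssim |u|^{-2}$, together with the cancellation $\int_{\varepsilon<|u|<2}k_1(u)\,du=0$ for every $\varepsilon>0$. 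Hence $k_1$ is a standard convolution Calder\'on--Zygmund kernel on $\mathbb R$ whose principal-value extension is bounded on $L^q(\mathbb R)$ for all $1<q<\infty$; the required $L^2$ bound follows from $\widehat{k_1}\in L^\infty$, which in turn comes from writing $k_1=\chi\cdot\mathrm{p.v.}(1/\cdot)$ and observing that $\widehat{k_1}$ is (up to constants) the convolution of the Schwartz function $\widehat\chi$ with the bounded function $\operatorname{sgn}$.

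The main subtlety lies in the decomposition step: the singularity of $\tilde K_n$ must be matched \emph{exactly} by $\chi(nu)/u$ so that the remainder genuinely belongs to $L^1(\mathbb R)$ with norm controlled uniformly in $n$. Once this is in place, the scaling argument reduces the problem to the classical $L^q$ boundedness of a single Calder\'on--Zygmund convolution kernel, and this is precisely the step where the hypothesis $1<q<\infty$ enters.
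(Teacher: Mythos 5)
Your proof is correct (up to one small, easily repaired point noted below) and takes a genuinely different route from the paper's. The paper works directly on $(0,\infty)$: it decomposes the kernel via the algebraic identity $\chi(n(\tfrac{\xi}{\eta}-1))=1+\tfrac{n(\xi-\eta)}{\eta}\int_0^1\chi'(ns(\tfrac{\xi}{\eta}-1))ds$ into the Hilbert kernel plus an $O(n/\eta)$ remainder, then exploits the near-diagonal support by covering it with $\sim n$ essentially disjoint squares per dyadic block $\tilde I_{jk}^n\times I_{jk}^n$, bounding the operator on each square by the Hilbert transform plus a Schur-test error and summing by almost orthogonality. You instead conjugate by the isometry $\mathcal M\colon L^q(0,\infty)\to L^q(\mathbb R)$ induced by the logarithmic change of variable, turning $H_n$ into convolution with a kernel $\tilde K_n$ on $\mathbb R$; you then peel off a genuine dilate of a single Calder\'on--Zygmund kernel $k_n(u)=\chi(nu)/u$ whose $L^q$ operator norm is $n$-independent by scaling, and show the remainder $R_n$ is uniformly small in $L^1$. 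Your argument makes the dilation invariance structural rather than combinatorial, which is cleaner and makes the uniformity in $n$ transparent; the paper's argument avoids the Mellin conjugation and is more self-contained at the level of elementary covering estimates.

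One small gap: you assert that $k_1(u)=\chi(u)/u$ is odd ``since $\chi$ is even,'' but the lemma hypotheses do not say $\chi$ is even. This is immaterial: since $\chi\equiv 1$ on $[-1,1]$ and $\chi$ is bounded with compact support, one has
\[
k_1(u)=\frac{1_{[-1,1]}(u)}{u}+\frac{\chi(u)-1_{[-1,1]}(u)}{u},
\]
where the second summand is bounded and supported in $1\le|u|\le 2$, hence in $L^1$, while the first is the classical truncated Hilbert kernel, which is bounded on $L^q(\mathbb R)$ for all $1<q<\infty$. With this substitute for the oddness/cancellation claim, everything else in your argument goes through.
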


\begin{proof}
We use
\[ \chi(n(\tfrac{\xi}{\eta}-1))=1+\frac{n(\xi-\eta)}{\eta}\int_0^1 \chi'(n s(\tfrac{\xi}{\eta}-1))ds \]
to decompose the kernel according to
\begin{equation}
\label{eq:decompHT}
 \frac{\chi(n(\tfrac{\xi}{\eta}-1))}{\xi-\eta}=\frac{1}{\xi-\eta}+\frac{n}{\eta}O(1).
 \end{equation}
Let
$I_{jk}^n:=\left [2^{j-1}+3\tfrac{k-1}{n}2^{j-1},2^{j-1}+3\tfrac{k}{n}2^{j-1}\right ]$
and observe that
\[ [2^{j-1},2^{j+1}]=\bigcup_{k=1}^n I_{jk}^n. \]
Furthermore,
set $\Delta_n:=\{(\xi,\eta) \in [0,\infty)^2: \chi(n(\tfrac{\xi}{\eta}-1))\not=0\}$.
Since we have $|\xi-\eta|\leq \frac{2}{n}\eta$ for all $(\xi,\eta)\in \Delta_n$,
$\eta \in I_{jk}^n$ implies $\xi\in \tilde I_{jk}^n$ for all $(\xi,\eta)\in \Delta_n$
where $\tilde I_{jk}^n$ are suitable (overlapping) intervals with
$|\tilde I_{jk}^n|\simeq \frac{2^j}{n}$ and $[2^{j-2},2^{j+2}]=\bigcup_{k=1}^n \tilde I_{jk}^n$.
As a consequence, we infer
\[ \Delta_n \subset \bigcup_{j\in\Z}\bigcup_{k=1}^n \tilde I_{jk}^n\times I_{jk}^n \]
for any $n\geq 100$.
Thus, we obtain
\begin{align*}
H_n f(\xi)&=\tfrac12 \sum_{j\in \Z}\sum_{k=1}^n \int_0^\infty
\frac{\chi(n(\tfrac{\xi}{\eta}-1))}{\xi-\eta}1_{I_{jk}^n}(\eta)f(\eta)d\eta \\
&=\tfrac12 \sum_{j\in \Z}\sum_{k=1}^n 1_{\tilde I_{jk}^n}(\xi)\int_0^\infty
\frac{\chi(n(\tfrac{\xi}{\eta}-1))}{\xi-\eta}1_{I_{jk}^n}(\eta)f(\eta)d\eta \\
&=\tfrac12 \sum_{j\in\Z}\sum_{k=1}^n 1_{I_{jk}^n}(\xi)H_n(1_{I_{jk}^n}f)(\xi).
\end{align*}
Consequently, it suffices to bound the operator $f\mapsto 1_{\tilde I_{jk}^n}
H_n(1_{I_{jk}^n}f)$ on $L^q:=L^q(0,\infty)$,
uniformly in $n\geq 100$ and $j\in \Z$, because then we can conclude that
\begin{align*}
\|H_n f\|_{L^q}^q &\lesssim \sum_{j\in\Z}\sum_{k=1}^n \|1_{\tilde I_{jk}^n}
H_n(1_{I_{jk}^n}f)\|_{L^q}^q
=\sum_{j\in\Z}\sum_{k=1}^n \|1_{\tilde I_{jk}^n}H_n(1_{I_{jk}^n}1_{I_{jk}^n}f)\|_{L^q}^q \\
&\lesssim \sum_{j\in\Z}\sum_{k=1}^n \|1_{I_{jk}^n}f\|_{L^q}^q\lesssim \|f\|_{L^q}^q.
\end{align*}
According to Eq.~\eqref{eq:decompHT},
the kernel of the operator $f \mapsto 1_{\tilde I_{jk}^n}H_n(1_{I_{jk}^n}f)$ is of the form
\begin{equation*}
 1_{\tilde I_{jk}^n}(\xi)1_{I_{jk}^n}(\eta)\frac{\chi(n(\tfrac{\xi}{\eta}-1))}{\xi-\eta}=
\frac{1_{\tilde I_{jk}^n}(\xi)1_{I_{jk}^n}(\eta)}{\xi-\eta}+n2^{-j}1_{\tilde I_{jk}^n}(\xi)
1_{I_{jk}^n}(\eta)O(1).
\end{equation*}
Thus, we obtain
the decomposition
\[ 1_{\tilde I_{jk}^n}H_n(1_{I_{jk}^n}f)=\pi 1_{\tilde I_{jk}^n}H(1_{I_{jk}^n} f)+B_{jk}^n f \]
with the standard Hilbert transform
$H$ and the kernel of $B_{jk}^n$ is pointwise bounded by
$Cn2^{-j}1_{\tilde I_{jk}^n}(\xi)1_{I_{jk}^n}(\eta)$ for some absolute constant $C>0$.
We immediately obtain $\|1_{\tilde I_{jk}^n}H(1_{I_{jk}^n} f)\|_{L^q}\lesssim \|f\|_{L^q}$ by the $L^q$-boundedness of
the Hilbert transform for $q\in (1,\infty)$ and the operator norm of $B_{jk}^n$ is bounded by
\[ \|B_{jk}^n\|_{L^q}\lesssim n2^{-j}
\left (\int_0^\infty 1_{\tilde I_{jk}^n}(\xi)d\xi \right )^{1/q}
\left (\int_0^\infty 1_{I_{jk}^n}(\eta)d\eta \right )^{1/q'} \lesssim 1 \]
for all $n\geq 100$, $j\in \Z$ and $k\in \{1,2,\dots,n\}$ since $|\tilde I_{jk}^n|
\simeq |I_{jk}^n|\simeq \frac{2^j}{n}$.
\end{proof}

With this result at our disposal, we can now prove the desired smoothing property
of $\mcK_{n_0}^d$.

\begin{lem}
\label{lem:Kccd}
For any $\epsilon>0$, $a,b\in \R$, and $q\in (1,\infty)$ we have the bound
\begin{align*}
\| \, |\cdot|^{-\frac{1}{2}+\epsilon}\langle \cdot \rangle^{1-2\epsilon}\mcK_{n_0}^d f|\cdot|^a
\langle \cdot\rangle^b\|_{L^q(0,\infty)}
&\lesssim \|f|\cdot|^a \langle \cdot\rangle^b\|_{L^q(0,\infty)}
\end{align*}
for all $n_0 \geq 100$.
\end{lem}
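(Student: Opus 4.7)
The plan is to decompose $\mcK_{n_0}^d$ into a main piece that factors through the truncated Hilbert transform $H_{n_0}$ of Lemma~\ref{lem:HT}, plus a remainder with a pointwise-bounded kernel supported in a thin diagonal strip; then to show that the weighted versions of both pieces are $L^q$-bounded uniformly in $n_0\geq 100$.

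First I would reduce to the case $a=b=0$. On the support of $\chi(n_0(\xi/\eta-1))$ one has $|\xi-\eta|\leq 2\eta/n_0$, hence $\xi\sim\eta$ uniformly in $n_0\geq 100$, so $|\xi|^a\langle\xi\rangle^b$ and $|\eta|^a\langle\eta\rangle^b$ are comparable with absolute constants. Absorbing the right-hand weight into $f$ and compensating on the left by the same $\eta$-power reduces the lemma to
\[
\| \, |\cdot|^{-\frac12+\epsilon}\langle\cdot\rangle^{1-2\epsilon}\mcK_{n_0}^d f\|_{L^q(0,\infty)}\lesssim \|f\|_{L^q(0,\infty)}.
\]

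Next I would Taylor-expand $F$ around the diagonal, writing
\[
F(\xi,\eta)=F(\eta,\eta)+(\xi-\eta)\tilde F(\xi,\eta),\qquad \tilde F(\xi,\eta):=\int_0^1 (\partial_1 F)(\eta+s(\xi-\eta),\eta)\,ds.
\]
This splits the kernel into the main piece $\rho(\eta)F(\eta,\eta)\,\chi(n_0(\xi/\eta-1))/(\xi-\eta)$ and a remainder $\rho(\eta)\tilde F(\xi,\eta)\chi(n_0(\xi/\eta-1))$, i.e., $\mcK_{n_0}^d=H_{n_0}\circ M_{m}+R_{n_0}$ with $m(\eta):=\rho(\eta)F(\eta,\eta)$. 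Using the asymptotics $\rho(\eta)\simeq \eta^{-1/2}$ for $\eta\lesssim 1$, $\rho(\eta)\simeq\eta^{1/2}$ for $\eta\gtrsim 1$ from Theorem~\ref{thm:spec}, together with $|F(\eta,\eta)|\lesssim \eta$ for $\eta\leq 1$ and $|F(\eta,\eta)|\lesssim \eta^{-1}$ for $\eta\geq 1$ from Theorem~\ref{thm:K}, one checks that the full weighted multiplier $\eta^{-1/2+\epsilon}\langle\eta\rangle^{1-2\epsilon}m(\eta)$ behaves like $\eta^{\epsilon}$ near $0$ and $\eta^{-\epsilon}$ at infinity, hence is uniformly bounded. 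Because the output weight can be swapped from $\xi$ to $\eta$ on the support (at the cost of a bounded multiplicative error), the main piece is dominated by $\|H_{n_0}\|_{L^q\to L^q}\cdot\|\eta^{-1/2+\epsilon}\langle\eta\rangle^{1-2\epsilon}m(\eta)\|_{L^\infty}$, and Lemma~\ref{lem:HT} controls the first factor uniformly in $n_0$.

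For the remainder I would apply Schur's test. From Theorem~\ref{thm:K} applied on the segment between $\xi$ and $\eta$ (on which $\xi\sim\eta$), one has $|\rho(\eta)\tilde F(\xi,\eta)|\lesssim \eta^{-1/2}$ for $\eta\lesssim 1$ and $|\rho(\eta)\tilde F(\xi,\eta)|\lesssim \eta^{-1}\langle\xi^{1/2}-\eta^{1/2}\rangle^{-N}$ for $\eta\gtrsim 1$. Multiplying by $\xi^{-1/2+\epsilon}\langle\xi\rangle^{1-2\epsilon}$ and using $\xi\sim\eta$, the weighted remainder kernel is $O(\eta^{-1+\epsilon}\chi)$ in the small-$\eta$ regime (on a strip of $\eta$-measure $\sim \xi/n_0$, giving an integral $O(\xi^\epsilon/n_0)\lesssim 1$), and in the large-$\eta$ regime the substitution $s=\eta^{1/2}$ reduces the $d\eta$-integral for fixed $\xi$ to $\int s^{-2\epsilon}\langle\xi^{1/2}-s\rangle^{-N}\,ds=O(\xi^{-\epsilon})\lesssim 1$ by the concentration of $\langle\cdot\rangle^{-N}$ near $\xi^{1/2}$. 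A symmetric argument controls the $d\xi$-integral for fixed $\eta$, and Schur closes with a bound independent of $n_0$.

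The main obstacle is, in fact, already handled: the uniform $L^q$-bound for $H_{n_0}$ in Lemma~\ref{lem:HT}. Once that is in place, the remaining difficulty is only bookkeeping—the Taylor splitting isolates precisely the cancellation/smoothing required, and the weight arithmetic at the endpoints $\eta\to 0$ and $\eta\to\infty$ is set up so that all the powers conspire to give the harmless factors $\eta^{\pm\epsilon}$.
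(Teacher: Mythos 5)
Your proposal follows essentially the same route as the paper: decompose $\mcK_{n_0}^d$ by extracting the diagonal value of the kernel so that the leading piece factors through the truncated Hilbert transform $H_{n_0}$ from Lemma~\ref{lem:HT}, and control the remainder by Schur-type estimates using the $C^1$ bounds of Theorem~\ref{thm:K}. The only genuine difference is bookkeeping, and it points to the one place where you are slightly loose: you Taylor-expand only $F$ and then claim that the output weight $\xi^{-1/2+\epsilon}\langle\xi\rangle^{1-2\epsilon}$ ``can be swapped from $\xi$ to $\eta$ on the support at the cost of a bounded multiplicative error''---but since this weight sits outside the integral, the swap is not a pointwise multiplier bound; you should instead write $w(\xi)=w(\eta)+(w(\xi)-w(\eta))$ and observe that $(w(\xi)-w(\eta))/(\xi-\eta)$ combined with the near-diagonal cutoff yields another Schur-bounded remainder (using $|w'(\xi)/w(\xi)|\simeq 1/\xi$). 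The paper sidesteps this by Taylor-expanding the entire weighted kernel $G(\xi,\eta)=w(\xi)\xi^a\langle\xi\rangle^b\rho(\eta)F(\xi,\eta)\eta^{-a}\langle\eta\rangle^{-b}$ at once, which automatically packages the weight variation into the remainder; it also keeps $a,b$ throughout rather than reducing to $a=b=0$, though your reduction is legitimate since $\xi\sim\eta$ uniformly on the cutoff support. With that one step tightened, your argument is correct, and your Schur-test treatment of the remainder (splitting into $\xi\lesssim 1$ where you use the strip width $\sim\xi/n_0$, and $\xi\gtrsim 1$ where you use the $\langle\xi^{1/2}-\eta^{1/2}\rangle^{-N}$ decay) is a cleaner packaging of the paper's tiling argument with iterated $L^qL^{q'}$ norms.
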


\begin{proof}
Consider the operator $\mcJ$ with kernel
\[ \xi^{-\frac{1}{2}+\epsilon}\langle \xi \rangle^{1-2\epsilon}\xi^a \langle \xi \rangle^b
K_{n_0}^d(\xi,\eta)\eta^{-a}\langle \eta \rangle^{-b}. \]
In order to prove the assertion it suffices to show that $\mcJ$ extends to an operator
on $L^q:=L^q(0,\infty)$ for $q \in (1,\infty)$
which is uniformly bounded in $n_0\geq 100$.
According to Theorem \ref{thm:K}, the kernel of $\mcJ$ can be written in the form
\[\xi^{-\frac{1}{2}+\epsilon}\langle \xi \rangle^{1-2\epsilon}\xi^a \langle \xi \rangle^b
K_{n_0}^d(\xi,\eta)\eta^{-a}\langle \eta \rangle^{-b}=
\chi(n_0(\tfrac{\xi}{\eta}-1))\frac{G(\xi,\eta)}{\xi-\eta} \]
where
\[ G(\xi,\eta)= \xi^{-\frac{1}{2}+\epsilon}\langle \xi \rangle^{1-2\epsilon}
\xi^a\langle \xi\rangle^b\rho(\eta)F(\xi,\eta)\eta^{-a}\langle \eta\rangle^{-b}. \]
We decompose $\mcJ=\mcJ_1+\mcJ_2$ where
\begin{align*}
 \mcJ_1 f(\xi)&=\int_0^\infty \chi(n_0(\tfrac{\xi}{\eta}-1))\frac{G(\eta,\eta)}{\xi-\eta}f(\eta)d\eta \\
 \mcJ_2 f(\xi)&=\int_0^\infty \chi(n_0(\tfrac{\xi}{\eta}-1))
 \frac{G(\xi,\eta)-G(\eta,\eta)}{\xi-\eta}f(\eta)d\eta.
 \end{align*}
 By setting $g(\eta):=G(\eta,\eta)$ we see that $\mcJ_1 f=H_{n_0}(gf)$ where $H_{n_0}$
 is the truncated Hilbert transform from Lemma \ref{lem:HT}.
Note that Theorem \ref{thm:K} implies $\|g\|_{L^\infty(0,\infty)}\lesssim 1$ and thus,
 \[ \|\mcJ_1 f\|_{L^q}= \|H_{n_0}(gf)\|_{L^q}\lesssim \|gf\|_{L^q}\lesssim \|f\|_{L^q} \]
 for all $n_0\geq 100$ by Lemma \ref{lem:HT}.
Consequently, it suffices to consider the operator $\mcJ_2$.

First, we study the case $\xi,\eta \leq 4$.
Since
\[ |G(\xi,\eta)-G(\eta,\eta)|\leq |\xi-\eta|\int_0^1 |\partial_1 G(\eta+s(\xi-\eta),\eta)|ds \]
we obtain from Theorem \ref{thm:K} the estimate
\begin{align*}
A_1(\xi,\eta)&:=1_{[0,4]}(\xi)1_{[0,4]}(\eta)\chi(n_0(\tfrac{\xi}{\eta}-1))\left|
\frac{G(\xi,\eta)-G(\eta,\eta)}{\xi-\eta}\right | \\
&\lesssim  1_{[0,4]}(\xi)1_{[0,4]}(\eta)\chi(n_0(\tfrac{\xi}{\eta}-1))\eta^{-1+\epsilon} \\
&\lesssim 1_{[0,4]}(\xi)1_{[0,4]}(\eta)\xi^{\frac{1}{q}(-1+\epsilon)}\eta^{\frac{1}{q'}(-1+\epsilon)}
\end{align*}
which yields $\|A_1\|_{L^qL^{q'}}\lesssim 1$ for all $n_0\geq 100$ and any $q\in (1,\infty)$.

It remains to study the case $\xi,\eta \in \Omega:=[0,\infty)^2\backslash [0,4]^2$.
Here we further distinguish between $|\xi-\eta|\leq 1$ and $|\xi-\eta|\geq 1$.
In the former case we obtain from Theorem \ref{thm:K} the bound
\begin{align*} A_2(\xi,\eta)&:=1_{[-1,1]}(\xi-\eta)1_\Omega(\xi,\eta)\chi(n_0(\tfrac{\xi}{\eta}-1))
\left|
\frac{G(\xi,\eta)-G(\eta,\eta)}{\xi-\eta}\right | \\
&\lesssim 1_{[-1,1]}(\xi-\eta)1_\Omega(\xi,\eta)\eta^{-\epsilon}.
\end{align*}
We define $J_k:=[k+1,k+3]$, $\tilde J_k:=[k,k+4]$ and note that
\[ \Delta \subset \bigcup_{k=1}^\infty \tilde J_k \times J_k \]
where $\Delta:=\{(\xi,\eta)\in \Omega: |\xi-\eta|\leq 1\}$.
Since
\[ 1_{J_k}(\eta)1_{[-1,1]}(\xi-\eta)=1_{\tilde J_k}(\xi)1_{J_k}(\eta)1_{[-1,1]}(\xi-\eta) \]
it suffices to consider the kernel $A_2$ on $\tilde J_k \times J_k$ (cf.~the proof of Lemma \ref{lem:HT}).
We obtain $\|A_2\|_{L^q(\tilde J_k)L^{q'}(J_k)}\lesssim 1$
for all $k\in \N$ and all $n_0\geq 100$ which settles the case $|\xi-\eta|\leq 1$.
Finally, if $|\xi-\eta|\geq 1$, we define dyadic intervals $I_N:=[2^{N-1},2^{N+1}]$,
$\tilde I_N:=[2^{N-2},2^{N+2}]$ and consider the
kernel on $\tilde I_N \times I_N$, $N\in \N$.
Thanks to the cut-off $\chi(n_0(\tfrac{\xi}{\eta}-1))$ it suffices to bound
\[ A_3(\xi,\eta):=1_{[1,\infty)}(|\xi-\eta|)1_{\tilde I_N}(\xi)1_{I_N}(\eta)\chi(n_0(\tfrac{\xi}{\eta}-1))
\frac{G(\xi,\eta)-G(\eta,\eta)}{\xi-\eta}, \]
uniformly in $N\in\N$.
Note that $1\leq |\xi-\eta|\leq 2\eta\leq 2^{N+2}$ on the support of $A_3$.
We further subdivide this interval by $[1,2^{N+2}]=\bigcup_{j=1}^{N+1}I_j$
 and from Theorem \ref{thm:K} we obtain the bound
\begin{equation}
\label{eq:A3}
 |A_3(\xi,\eta)|\lesssim 2^{-\epsilon N}\sum_{j=1}^{N+1}A_{3j}(\xi,\eta)
 \end{equation}
where
\[ A_{3j}(\xi,\eta)=1_{I_j}(|\xi-\eta|)
1_{\tilde I_N}(\xi)1_{I_N}(\eta)\chi(n_0(\tfrac{\xi}{\eta}-1))2^{-j}. \]
Thanks to the cut-off $1_{I_j}(|\xi-\eta|)$ it suffices to bound $A_{3j}$ on squares
$Q_j$ of area $\simeq 2^{2j}$ which yields $\|A_{3j}\|_{L^qL^{q'}(Q_j)}\lesssim 1$ for all
$j \in \{1,2,\dots,N+1\}$ and any $q\in(1,\infty)$.
Consequently, by Eq.~\eqref{eq:A3} we obtain
\[ \|A_3\|_{L^q(\tilde I_N)L^{q'}(I_N)}\lesssim N 2^{-\epsilon N} \lesssim 1 \]
for all $N\in \N$ which finishes the proof.
\end{proof}

\subsection{Estimates for the off-diagonal part}
Recall that our aim is to prove smallness of $(\hat{\mcD}_c \mcH_c \beta \mcK_{cc})^n$
for sufficiently large $n$.
As suggested by the decomposition $\mcK_{cc}=\mcK_{n_0}^d+\mcK_{n_0}^{nd}$ we consider the
diagonal and off-diagonal parts separately.
In fact, it turns out that for the off-diagonal part it suffices to consider the
operator $\hat{\mcD}_c\mcH_c \beta \mcK_{n_0}^{nd}
\hat{\mcD}_c\mcH_c \beta$, i.e., $\mcK_{n_0}^{nd}$ gets ``sandwiched'' between
two copies of $\hat{\mcD}_c \mcH_c \beta$.
Our goal is to show that the norm (on $\mcY^\alpha$) of this operator can be made
small by choosing $\tau_0$ in Definition \ref{def:spaces} large.
More precisely, we have the following result.

\begin{lem}
\label{lem:offdiag}
Let $\alpha>\frac34(1+\frac{1}{\nu})$. Then there exists an $\epsilon>0$ such that
\[ \|\hat{\mcD}_c \mcH_c \beta \mcK_{n_0}^{nd}\hat{\mcD}_c \mcH_c \beta
\|_{\mcY^\alpha}\lesssim n_0^4 \tau_0^{-\epsilon} \]
for all $n_0\in \N$ where $\tau_0$ is from Definition \ref{def:spaces}.
\end{lem}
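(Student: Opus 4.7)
The plan is to combine Lemma~\ref{lem:estH} (which bounds $\hat{\mcD}_c\mcH_c\beta$ on $\mcY^\alpha$) with the off-diagonal smoothing estimate of Lemma~\ref{lem:Kccnd} for $\mcK_{n_0}^{nd}$, and to extract the smallness $\tau_0^{-\epsilon}$ by feeding the extra low-frequency weight $|\xi|^{1/(2p)}$ produced by $\mcK_{n_0}^{nd}$ into a slightly sharper interpolate of the parametrix kernel $H_c$ than the one used in the proof of Lemma~\ref{lem:estH}.

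First, I would bound the inner operator: for $g\in\mcY^\alpha$ one has $\beta g\in\mcY^{\alpha+1}$ because $\beta(\tau)\simeq \tau^{-1}$, and since the hypothesis $\alpha>\tfrac{3}{4}(1+\tfrac{1}{\nu})$ places $\alpha+1$ strictly above the threshold of Lemma~\ref{lem:estH}, that lemma gives $\|\hat{\mcD}_c\mcH_c\beta g\|_{\mcY^\alpha}\lesssim \|g\|_{\mcY^\alpha}$. Denote the output by $h:=\hat{\mcD}_c\mcH_c\beta g$. Applying Lemma~\ref{lem:Kccnd} pointwise in $\tau$,
\[
\bigl\| |\cdot|^{-\tfrac{1}{2p}}\langle\cdot\rangle^{\tfrac{1}{2p}}\mcK_{n_0}^{nd}h(\tau,\cdot)\bigr\|_Y\lesssim n_0^2 \|h(\tau,\cdot)\|_Y,
\]
with a parallel $X$-version carrying the factor $n_0^4$. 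Equivalently, $\mcK_{n_0}^{nd}h$ enjoys an extra $|\xi|^{1/(2p)}$ vanishing at zero and an extra $\langle\xi\rangle^{-1/(2p)}$ decay at infinity compared with a generic element of $\mcY^\alpha$.

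The crux of the proof is then the outer application of $\hat{\mcD}_c\mcH_c\beta$ to $\mcK_{n_0}^{nd}h$. Here I would use the explicit parametrix representation~\eqref{eq:xpara1}--\eqref{eq:xpara4} together with the interpolated bound
\[
|H_c(\tau,\sigma,\xi)|\lesssim \nu^{\theta}\,\omega_\nu(\tau/\sigma)\,\sigma^{\theta}\,\xi^{-1/2+\theta/2},
\]
which follows from Lemma~\ref{lem:parbounds} by interpolating its two estimates. Choosing $\theta:=2\delta+\tfrac{1}{p}$, slightly larger than the value $2\delta$ used in the proof of Lemma~\ref{lem:estH}, the excess $\xi^{1/(2p)}$ in the kernel is absorbed against the $|\xi|^{1/(2p)}$ vanishing of the $\mcK_{n_0}^{nd}$-output; $\hat{H}_c$ is handled analogously via the change of variable $\xi\mapsto\omega_\nu(\tau,\sigma)^2\xi$ inside the parametrix, which transfers the low-frequency weight into an additional $\sigma$-power. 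Plugging into Lemma~\ref{lem:estB} with the correspondingly enlarged effective $\gamma$ reproduces the same structural estimate as in the proof of Lemma~\ref{lem:estH}, but with a strictly larger admissible $\alpha$; under the strict hypothesis $\alpha>\tfrac{3}{4}(1+\tfrac{1}{\nu})$ this excess manifests as a factor $\tau_0^{-\epsilon}$ for some $\epsilon=\epsilon(\nu,p,\delta)>0$.

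The main obstacle is handling the two components of the $Y$-norm simultaneously: the $L^p$ part follows cleanly from the $Y$-bound in Lemma~\ref{lem:Kccnd} combined with the plan above, but the weighted $L^2_\rho$ piece requires pairing the $\langle\cdot\rangle^{-1/(2p)}$ high-frequency improvement from $\mcK_{n_0}^{nd}$ with the $\langle\cdot\rangle^{1/8}$ weight of $Y$, which in turn forces one to invoke the stronger $X$-version of Lemma~\ref{lem:Kccnd} at an intermediate step. This is precisely the source of the factor $n_0^4$ in the final bound rather than the $n_0^2$ that the $Y$-bound alone would give.
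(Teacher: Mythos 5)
There is a genuine gap: your argument only covers the low-frequency regime, whereas the paper's proof of Lemma~\ref{lem:offdiag} splits into two cases according to whether $\sigma_1\xi\lesssim 1$ or $\sigma_1\xi\gtrsim 1$, and the second case is the heart of the matter.

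The device you rely on---feeding the extra $|\xi|^{1/(2p)}$ vanishing of $\mcK_{n_0}^{nd}h$ into a sharpened interpolate of the parametrix kernel---does not apply to the outer operator $\hat{\mcD}_c\mcH_c\beta$. That operator is given by the kernel $\hat H_c$ of~\eqref{eq:xpara4}, and Lemma~\ref{lem:parbounds} provides only $|\hat H_c(\tau,\sigma,\xi)|\lesssim\omega_\nu(\tau/\sigma)$ with no $\xi$-decay to interpolate against; the $\min\{\xi^{-1/2},\sigma\}$ bound that produces the interpolate $\nu^{2\delta}\omega_\nu\sigma^{2\delta}\xi^{-1/2+\delta}$ is available only for the sine kernel $H_c$ used in $\mcH_c$, not for the cosine kernel in $\hat{\mcD}_c\mcH_c$. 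Your fallback remark that the low-frequency weight can be ``transferred into an additional $\sigma$-power'' is exactly the paper's Case 1 argument, but it works only on the set $\{\sigma\xi\lesssim1\}$, where one can trade $\xi^{1/(2p)}$ for $\sigma^{-1/(2p)}$. On the complementary region $\sigma\xi\gtrsim1$ the extra vanishing of $\mcK_{n_0}^{nd}$ at $\xi=0$ buys nothing, and no power of $\tau_0^{-\epsilon}$ is produced.

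For $\sigma_1\xi\gtrsim1$ the paper expands the product of the two $\hat H_c$ kernels via $2\cos a\cos b=\cos(a+b)+\cos(a-b)$ and then integrates by parts in $\sigma_1$, using $\partial_{\sigma_1}\mu=-c(\sigma_1)(1+\tfrac1\nu)\sigma_1^{-1}\mu$ and the fact that on $\mathrm{supp}\,K_{n_0}^{nd}$ one has $|\xi^{1/2}-\eta^{1/2}|\gtrsim n_0^{-1/2}\xi^{1/2}$, which makes the phase derivative non-degenerate at the cost of one factor of $n_0$. The integration by parts produces the extra $\sigma_1^{-1/2}$ that, via Lemma~\ref{lem:estB}, yields the $\tau_0^{-1/2}$ gain, and the $n_0^4$ in the statement is generated by the $\xi\partial_\xi+\eta\partial_\eta$ derivative hitting $\tilde K_{n_0}^{nd}$ in the boundary term $\mcA_4$---not, as you suggest, by switching to the $X$-version of Lemma~\ref{lem:Kccnd}. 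Without this oscillatory argument the estimate on the high-frequency region is missing entirely, so the proof as proposed does not close.
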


\begin{proof}
We have
\begin{align}
\label{eq:HKH}
\hat{\mcD}_c &\mcH_c \beta \mcK_{n_0}^{nd}\hat{\mcD}_c \mcH_c \beta x(\tau,\xi) \nonumber \\
&=\int_\tau^\infty \hat{H}_c(\tau,\sigma_1,\xi)\beta(\sigma_1)
\int_0^\infty K_{n_0}^{nd}(\omega(\tau, \sigma_1)^2 \xi,\eta) \nonumber \\
&\quad \times \int_{\sigma_1}^\infty \hat{H}_c(\sigma_1,\sigma_2,\eta)\beta(\sigma_2)
x(\sigma_2, \omega(\sigma_1, \sigma_2)^2\eta)d\sigma_2 d\eta d\sigma_1
\end{align}
where $\omega(s_1, s_2)=\kappa(s_1)\kappa^{-1}(s_2)$ and $K_{n_0}^{nd}$ is the kernel of the operator
$\mcK_{n_0}^{nd}$.
We split the integral over $\sigma_1$ in two parts by distinguishing between the cases
$\sigma_1 \xi\lesssim 1$
and $\sigma_1 \xi\gtrsim 1$.
In the former case we exploit the smoothing property from Lemma \ref{lem:Kccnd} in order
to gain a small factor.
Thus, we write $y(\sigma,\xi):=\hat{\mcD}_c\mcH_c \beta x(\sigma,\xi)$ and note
that $y \in \mcY^{\alpha}$
by Lemma \ref{lem:estH}.
We have to estimate
\begin{align*}
\mcJ_1 y(\tau,\xi):=\int_\tau^\infty \chi(\sigma \xi) \hat H_c(\tau,\sigma,\xi)\beta(\sigma)
\mcK_{n_0}^{nd}y(\sigma,\omega(\tau, \sigma)^2\xi)d\sigma.
\end{align*}
Recall that
\[ \hat H_c(\tau,\sigma,\xi)=-\omega(\tau, \sigma)^\frac32\rho(\xi)^{-\frac12}
\rho \left (\omega(\tau,\sigma)^2 \xi \right )^\frac12
\cos \left (\kappa(\tau)\xi^{\frac{1}{2}}\int_{\tau}^\sigma \kappa^{-1}(u)\,du \right ) \]
and, since $\omega(\tau, \sigma)\leq 1$, we obtain from the asymptotics of
$\rho$ in Theorem \ref{thm:spec} the bound
$|\hat H_c(\tau,\sigma,\xi)|\lesssim \omega(\tau, \sigma)$.
Consequently, with $p$ from Definition \ref{def:spaces} we have
\begin{align*} |\mcJ_1 y(\tau,\xi)|&\lesssim \int_\tau^\infty \chi(\sigma \xi)\sigma^{-1}
\omega(\tau, \sigma)[\omega(\tau, \sigma)^2 \xi]^{\frac{1}{2p}}
[\omega(\tau, \sigma)^2 \xi]^{-\frac{1}{2p}}|\mcK_{n_0}^{nd}y(\sigma,\omega(\tau, \sigma)^2 \xi)|d\sigma  \\
&\lesssim \int_\tau^\infty \sigma^{-1-\frac{1}{2p}}
\omega(\tau, \sigma)^{1+\frac{1}{p}}
[\omega(\tau, \sigma)^2 \xi]^{-\frac{1}{2p}}|\mcK_{n_0}^{nd}
y(\sigma,\omega(\tau,\sigma)^2 \xi)|d\sigma
\end{align*}
and Lemmas \ref{lem:estB}, \ref{lem:Kccnd} yield
\[ \|\mcJ_1 y\|_{\mcY^\alpha}\lesssim \tau_0^{-\frac{1}{2p}}\sup_{\tau>\tau_0}
\tau^{\alpha}\| \, |\cdot|^{-\frac{1}{2p}}
\mcK_{n_0}^{nd}y(\tau,\cdot)\|_Y\lesssim n_0^2 \tau_0^{-\frac{1}{2p}}\|y\|_{\mcY^{\alpha}}. \]
Thus, by Lemma \ref{lem:estH} we obtain $\|\mcJ_1 y\|_{\mcY^\alpha}
\lesssim n_0^4\tau_0^{-\frac{1}{2p}}\|x\|_{\mcY^\alpha}$.

It remains to consider the case $\sigma_1 \xi\gtrsim 1$.
Unfortunately, this is more complicated and we have to exploit the oscillation of the
kernel.
After the change of variable $\eta\mapsto \omega(\sigma_1, \sigma_2)^{-2}\eta$
and an application of Fubini it remains to study the operator
\begin{align*}
\mcJ_2 x(\tau,\xi):=&\int_0^\infty \int_\tau^\infty \int_{\sigma_1}^\infty
[1-\chi(\sigma_1 \xi)]\hat H_c(\tau,\sigma_1,\xi)
\hat H_c \left (\sigma_1,\sigma_2,\omega(\sigma_1, \sigma_2)^{-2}\eta \right) \\
&\times \beta(\sigma_1)\beta(\sigma_2)\omega(\sigma_1,\sigma_2)^{-2}
K_{n_0}^{nd}\left (\omega(\tau, \sigma_1)^2 \xi,
\omega(\sigma_1, \sigma_2)^{-2}
\eta \right ) \\
&\times x(\sigma_2,\eta)d\sigma_2 d\sigma_1 d\eta,
\end{align*}
cf.~Eq.~\eqref{eq:HKH}.
We have
\begin{align*}
\hat H_c(&\tau,\sigma_1,\xi)
\hat H_c \left (\sigma_1,\sigma_2,\omega(\sigma_1, \sigma_2)^{-2}\eta \right)=
A(\tau,\sigma_1,\sigma_2,\xi,\eta) \\
&\times\cos\left (\kappa(\tau)\xi^{\frac{1}{2}}\int_{\tau}^{\sigma_1}\kappa^{-1}(u)\,du\right )
\cos\left (\kappa(\sigma_2)\eta^{\frac{1}{2}}\int_{\sigma_1}^{\sigma_2}\kappa^{-1}(u)\,du\right )
\end{align*}
where
\begin{align*}
A(\tau,\sigma_1,\sigma_2,\xi,\eta)=&
\omega(\tau, \sigma_2)^{\frac32}\rho(\xi)^{-\frac12}
\rho\left (\omega(\tau, \sigma_1)^2\xi\right )^\frac12
\rho\left (\omega(\sigma_1, \sigma_2)^{-2}\eta\right )^{-\frac12}
\rho(\eta)^\frac12.
\end{align*}
By the asymptotics of $\rho$ given in Theorem \ref{thm:spec} and the fact that
$\tau\leq  \sigma_1\leq \sigma_2$, we obtain the estimate
$|A(\tau,\sigma_1,\sigma_2,\xi,\eta)|\lesssim \omega(\tau, \sigma_2)$
with symbol behavior under differentiation with respect to each variable.
Furthermore, by using the trigonometric identity $2\cos a \cos b=\cos (a+b)+\cos(a-b)$
we observe that the operator in question
decomposes as $\mcJ_2=\mcA_++\mcA_-$
where
\begin{align*} \mcA_{\pm}x(\tau,\xi)=&
\tfrac12 \int_0^\infty \int_\tau^\infty \int_{\sigma_1}^\infty
[1-\chi(\sigma_1 \xi)]\beta(\sigma_1)\beta(\sigma_2)\omega(\sigma_1, \sigma_2)^{-2}
A(\tau,\sigma_1,\sigma_2,\xi,\eta) \\
&\times \cos \left (\xi^{\frac{1}{2}}\kappa(\tau)\int_{\tau}^{\sigma_1}\kappa^{-1}(u)\,du \pm \eta^{\frac{1}{2}}\kappa(\sigma_2)\int_{\sigma_1}^{\sigma_2}\kappa^{-1}(u)\,du\right ) \\
&\times K_{n_0}^{nd}\left (\omega(\tau, \sigma_1)^2 \xi,
\omega(\sigma_1, \sigma_2)^{-2}
\eta \right )x(\sigma_2,\eta)d\sigma_2 d\sigma_1 d\eta.
\end{align*}
It suffices to consider $\mcA_+$.
We abbreviate
\[ \mu:= \xi^{\frac{1}{2}}\kappa(\tau)\kappa^{-1}(\sigma_1)- \eta^{\frac{1}{2}}\kappa(\sigma_2)\kappa^{-1}(\sigma_1) \]
and since $\partial_{\sigma_1}\mu=-c(\sigma_1)(1+\frac{1}{\nu})\sigma_1^{-1}\mu$, $c(\sigma_1)\in [2^{-1},2]$,  we obtain the identity
\begin{align*}
&\cos \left (\Omega\right )=\partial_{\sigma_1}\left [\mu^{-1}
\sin(\Omega) \right ] - c(\sigma_1)(1+\tfrac{1}{\nu})\sigma_1^{-1}\mu^{-1}\sin(\Omega),\\&\Omega = \xi^{\frac{1}{2}}\kappa(\tau)\int_{\tau}^{\sigma_1}\kappa^{-1}(u)\,du + \eta^{\frac{1}{2}}\kappa(\sigma_2)\int_{\sigma_1}^{\sigma_2}\kappa^{-1}(u)\,du
\end{align*}
Then we use the integration by parts formula
\begin{align*}
\int_\tau^\infty &\int_{\sigma_1}^\infty \partial_{\sigma_1}f(\sigma_1,\sigma_2)
g(\sigma_1,\sigma_2)d\sigma_2 d\sigma_1=
\int_\tau^\infty f(\sigma_1,\sigma_1)g(\sigma_1,\sigma_1)d\sigma_1 \\
&-\int_\tau^\infty f(\tau,\sigma_1)g(\tau,\sigma_1)d\sigma_1
-\int_\tau^\infty \int_{\sigma_1}^\infty f(\sigma_1,\sigma_2)
\partial_{\sigma_1}g(\sigma_1,\sigma_2)d\sigma_2 d\sigma_1
\end{align*}
to conclude that the operator $\mcA_+$ decomposes into four types of terms,
$\mcA_+=\sum_{j=1}^4 \mcA_j$, of the form
\begin{align*}
\mcA_1 x(\tau,\xi)&=\int_\tau^\infty
\int_0^\infty \tilde K_{n_0}^{nd}\left (\omega(\tau,\sigma_1)^2 \xi,
\eta \right ) \\
&\quad \times \int_{\sigma_1}^\infty
O(\sigma_1^{-\frac32}) \sigma_2^{-1}\omega(\tau, \sigma_2)
x(\sigma_2,\omega(\sigma_1,\sigma_2)^2\eta)d\sigma_2 d\eta d\sigma_1, \\
\mcA_2 x(\tau,\xi)&=\int_\tau^\infty
\int_0^\infty O(\sigma_1^{-\frac32})\omega(\tau, \sigma_1)
\tilde K_{n_0}^{nd}\left (\omega(\tau, \sigma_1)^2 \xi,
\eta \right )
x(\sigma_1,\eta)d\eta d\sigma_1
\end{align*}
as well as
\begin{align*}
\mcA_3x(\tau,\xi)&=\int_0^\infty \tilde K_{n_0}^{nd}(\xi,\eta)
\int_\tau^\infty O(\tau^{-1})\sigma_1^{-\frac12}\omega(\tau, \sigma_1)
x(\sigma_1,\omega(\tau, \sigma_1)^2\eta)d\sigma_1 d\eta, \\
\mcA_4 x(\tau,\xi)&=
\int_\tau^\infty
\int_0^\infty \partial_{\sigma_1}
\tilde K_{n_0}^{nd}\left (\omega(\tau, \sigma_1)^2 \xi,
\omega(\sigma_1, \sigma_2)^{-2}\eta \right ) \\
&\quad \times \int_{\sigma_1}^\infty
O(\sigma_1^{-\frac12}) \sigma_2^{-1}\omega(\tau, \sigma_2)
\omega(\sigma_1, \sigma_2)^{-2}
x(\sigma_2,\eta)d\sigma_2 d\eta d\sigma_1
\end{align*}
with
\[ \tilde K_{n_0}^{nd}(\xi,\eta):=\xi^\frac12 \frac{K_{n_0}^{nd}(\xi,\eta)}{\xi^\frac12-\eta^\frac12} \]
where we have used the fact that $\sigma_1^{-\frac12}\lesssim \xi^\frac12$ on the support
of the cut-off $1-\chi(\sigma_1 \xi)$ and performed the change of variable
$\eta \mapsto \omega(\sigma_1, \sigma_2)^2\eta$ in the first three terms.
Since
\[ \left |\frac{\eta^\frac12}{\xi^\frac12}-1\right |\gtrsim \frac{1}{n_0} \]
on the support of $K_{n_0}^{nd}$ (cf.~the proof of Lemma \ref{lem:Kccnd}), we observe
that
\[ |\tilde K_{n_0}^{nd}(\xi,\eta)|\lesssim n_0 |K_{n_0}^{nd}(\xi,\eta)| \] and thus,
Lemmas \ref{lem:estB} and \ref{lem:Kccnd} yield
$\|\mcA_j x\|_{\mcY^\alpha}\lesssim n_0^3 \tau_0^{-\frac12}\|x\|_{\mcY^\alpha}$
for $j=1,2,3$.
Finally, after the change of variables $\eta\mapsto \omega(\sigma_1, \sigma_2)^2\eta$,
the operator $\mcA_4$ can be written as
\begin{align*}
\mcA_4x(\tau,\xi)&=
\int_\tau^\infty
\int_0^\infty
[\xi \partial_\xi+\eta \partial_\eta] \tilde K_{n_0}^{nd}\left (\omega(\tau, \sigma_1)^2 \xi,\eta \right ) \\
&\quad \times \int_{\sigma_1}^\infty
O(\sigma_1^{-\frac32}) \sigma_2^{-1} \omega(\tau, \sigma_2)
x(\sigma_2,\omega(\sigma_1, \sigma_2)^2\eta)d\sigma_2 d\eta d\sigma_1
\end{align*}
and since $|[\xi \partial_\xi+\eta \partial_\eta]\tilde K_{n_0}^{nd}(\xi,\eta)|
\lesssim n_0^2 |K_{n_0}^{nd}(\xi,\eta)|$ by Theorem \ref{thm:spec}, we obtain
\[ \|\mcA_4 x\|_{\mcY^\alpha}\lesssim n_0^4 \tau_0^{-\frac12}\|x\|_{\mcY^\alpha} \]
as before by Lemmas \ref{lem:estB} and \ref{lem:Kccnd}.
\end{proof}

\subsection{Estimates for the diagonal term}
Next, we consider the diagonal operator $\mcK_{n_0}^d$.
We further decompose $\mcK_{n_0}^d$ in the following way.
We set
\begin{align*}
K_1^\epsilon(\xi,\eta)&:=1_{[0,\epsilon)}(\xi)K_{n_0}^d(\xi,\eta) \\
K_3^\epsilon(\xi,\eta)&:=1_{(\epsilon^{-1},\infty)}(\xi)K_{n_0}^d(\xi,\eta)
\end{align*}
where $K_{n_0}^d$ is the kernel of $\mcK_{n_0}^d$.
Following our usual scheme, we denote the operator with kernel $K_j^{\epsilon}$ by
$\mcK_j^\epsilon$,
$j=1,3$. Furthermore, we define $\mcK_2^\epsilon$ by
\[ \mcK_{n_0}^d=\sum_{j=1}^3 \mcK_j^\epsilon \]
which yields the desired decomposition.
We bear in mind that the operators $\mcK_j^\epsilon$ depend on $n_0$ but suppress this
dependence in the notation.
Finally, we set
\begin{align*}
\mcA_\epsilon&:=2\hat{\mcD}_c \mcH_c \beta \mcK_1^\epsilon \\
\mcB_\epsilon&:=2\hat{\mcD}_c \mcH_c \beta \mcK_2^\epsilon \\
\mcC_\epsilon&:=2\hat{\mcD}_c \mcH_c \beta \mcK_3^\epsilon.
\end{align*}
First, we establish some smallness properties.
Here and in the following, the product of noncommutative operators $A_j$ is defined as
\[ \prod_{j=1}^n A_j:=A_1 A_2 \dots A_n. \]

\begin{lem}
\label{lem:ABC}
Let $\alpha> \frac34(1+\tfrac{1}{\nu})$ and $\epsilon>0$ be sufficiently small.
Then we have the bounds
\[ \|\mcA_\epsilon\|_{\mcY^\alpha}\lesssim \epsilon^\frac14,
\quad \|\mcC_\epsilon\|_{\mcY^\alpha}\lesssim \epsilon^\frac14 \]
as well as
\[ \left \|\prod_{j=0}^{n_0-1}\mcB_{\mu^{-j}\epsilon}\right \|_{\mcY^\alpha}
\leq \left [\frac{C^{n_0-1}\epsilon^{-(n_0-1)}}{(n_0-1)!} \right ]^\frac{1}{p}
\]
for all sufficiently large $n_0 \in \N$
where $\mu:=1+\frac{4}{n_0}$, $C>0$ is some absolute constant
and $p$ is from Definition \ref{def:spaces}.
\end{lem}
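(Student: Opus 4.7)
The plan is to treat the three estimates in parallel, with the first two following from the smoothing Lemma~\ref{lem:Kccd} combined with the parametrix bounds of Lemma~\ref{lem:estH}, and the third requiring a Volterra-type iteration to extract the factorial.

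For $\mathcal{A}_\epsilon$ and $\mathcal{C}_\epsilon$, the idea is to let the support restriction built into $\mathcal{K}_1^\epsilon$ (respectively $\mathcal{K}_3^\epsilon$) absorb a small factor of $\epsilon^{1/4}$ out of the smoothing weight $|\cdot|^{-1/2+\delta'}\langle\cdot\rangle^{1-2\delta'}$ provided by Lemma~\ref{lem:Kccd}. Concretely, for $\mathcal{A}_\epsilon$ one writes $\mathcal{K}_1^\epsilon f=1_{[0,\epsilon)}\mathcal{K}_{n_0}^d f$ and notes that Lemma~\ref{lem:Kccd} represents $\mathcal{K}_{n_0}^d f$ as $|\cdot|^{1/2-\delta'}\langle\cdot\rangle^{-1+2\delta'}$ times an $L^q$-bounded quantity; on $\xi<\epsilon$ the weight $|\xi|^{1/2-\delta'}$ controls the $L^p$-norm with a small factor, while in the weighted $L^2_\rho$-norm the relation $\rho(\xi)\simeq \xi^{-1/2}$ near $0$ yields $\big(\int_0^\epsilon\xi^{1-2\delta'}\xi^{-1/2}\,d\xi\big)^{1/2}\simeq\epsilon^{3/4-\delta'}$. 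The bound for $\mathcal{C}_\epsilon$ is analogous: on $\xi>\epsilon^{-1}$ one exploits $\langle\xi\rangle^{-1+2\delta'}\le\epsilon^{1-2\delta'}$, and the large-$\xi$ asymptotics $\rho(\xi)\simeq\xi^{1/2}$ make $\big(\int_{\epsilon^{-1}}^\infty\xi^{-5/4+O(\delta')}\,d\xi\big)^{1/2}\simeq\epsilon^{1/4-O(\delta')}$ the dominant small factor, giving the $\epsilon^{1/4}$ in the statement. Composing with $\hat{\mathcal{D}}_c\mathcal{H}_c\beta$ via Lemma~\ref{lem:estH} and noting $\beta(\tau)\simeq\tau^{-1}$ compensates the time-decay loss, so that we obtain bounded mappings $\mathcal{Y}^\alpha\to\mathcal{Y}^\alpha$ with the asserted smallness.

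For the iterated product $\prod_{j=0}^{n_0-1}\mathcal{B}_{\mu^{-j}\epsilon}$, the plan is to unfold the composition into a nested integral using the parametrix representation \eqref{eq:xpara3}--\eqref{eq:xpara4}: this produces $n_0$ ordered time integrations $\tau\le\sigma_1\le\sigma_2\le\cdots\le\sigma_{n_0}$ together with $n_0$ frequency integrations against the diagonal kernels $K_2^{\mu^{-j}\epsilon}$. The kernel bound $|\hat{H}_c|\lesssim\omega(\sigma_j,\sigma_{j+1})$, the uniform-in-$n_0$ boundedness of $\mathcal{K}_{n_0}^d$ from Lemma~\ref{lem:Kccd}, and the identification $\omega\le 1$ together control each factor without constants depending on $n_0$. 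The crucial feature is the Volterra structure in time: the iterated kernel is supported on the $n_0$-dimensional simplex, whose volume relative to the natural weight produces the $1/(n_0-1)!$ decay in standard fashion (as in the Neumann-series proof for Volterra equations). The frequency integrations over the annuli $[\mu^{-j}\epsilon,\mu^j\epsilon^{-1}]$—each of logarithmic width $\lesssim\log(\epsilon^{-1})$ but carrying a pointwise factor $\epsilon^{-1}$ once the kernel bounds from Theorem~\ref{thm:K} are applied—contribute together a factor $\lesssim C^{n_0-1}\epsilon^{-(n_0-1)}$. Applying these bounds inside the $L^p$-component of the $Y$-norm and using a Schur/Minkowski estimate on the simplex converts the pointwise Volterra bound into the claimed $\big[C^{n_0-1}\epsilon^{-(n_0-1)}/(n_0-1)!\big]^{1/p}$ with the $1/p$ exponent arising from the $p$-th root in the $L^p$-norm.

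The main obstacle is the bookkeeping in the third part: one must verify that the $n_0$ annular cutoffs, combined with the frequency rescalings $\eta\mapsto\omega(\sigma_j,\sigma_{j+1})^2\eta$ inserted at each composition, remain compatible with the simplex constraints $\tau\le\sigma_1\le\cdots\le\sigma_{n_0}$ without destroying the factorial. The geometric choice $\mu=1+4/n_0$ is designed precisely so that the widening annuli absorb the downward shift of frequencies caused by $\omega\le1$, allowing the chain of diagonal kernels to stay ``on-diagonal'' through $n_0$ compositions; tracking this requires a careful interplay between Lemma~\ref{lem:Kccd} (used uniformly in $n_0$) and a Schur-type estimate whose constants must be shown independent of the level $j$.
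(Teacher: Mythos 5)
Your treatment of $\mcA_\epsilon$ and $\mcC_\epsilon$ follows the paper's approach: extract a small power of $\epsilon$ from the weight in Lemma~\ref{lem:Kccd} using the support restrictions $\xi<\epsilon$ (resp.\ $\xi>\epsilon^{-1}$), then compose with $\hat{\mcD}_c\mcH_c\beta$ via the kernel bound $|\hat H_c(\tau,\sigma,\xi)\beta(\sigma)|\lesssim\omega(\tau,\sigma)\sigma^{-1}$ and Lemma~\ref{lem:estB}. This part is fine.

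Your argument for $\prod_{j=0}^{n_0-1}\mcB_{\mu^{-j}\epsilon}$, however, has a genuine gap. You attribute $1/(n_0-1)!$ to the volume of ``the $n_0$-dimensional simplex'', but the time variables $\tau\leq\sigma_1\leq\cdots\leq\sigma_{n_0}$ range over an \emph{unbounded} simplex, and the kernel $\omega_\nu(\sigma_j/\sigma_{j+1})^{p-2}\sigma_j^{-1}$ merely makes each single $\sigma_{j+1}$-integral converge to an absolute constant; iterating $n_0$ times gives $C^{n_0}$, not $C^{n_0}/(n_0-1)!$. Likewise, the $\epsilon^{-(n_0-1)}$ does not come from the frequency integrals over the annuli — the paper bounds each $\mcK_2^{\mu^{-j}\epsilon}$ via Lemma~\ref{lem:Kccd} with a constant uniform in $\epsilon$ and $n_0$. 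The step you are missing is the derivation of a \emph{time constraint} from the frequency cutoffs: on the support of the iterated kernel one has $\omega(\sigma_j,\sigma_{j+1})^2\eta_j/\eta_{j+1}\geq 1-\tfrac{2}{n_0}$ at each level; telescoping over all $n_0$ steps and inserting the lower cutoff $\mu^{-(n_0-1)}\epsilon$ and upper cutoff $\epsilon^{-1}$ forces $\omega(\sigma_1,\sigma_{n_0})\gtrsim\epsilon$, i.e.\ $\sigma_{n_0}\leq\epsilon^{-1}\sigma_1$. The choice $\mu=1+\tfrac{4}{n_0}$ is tuned so that $(1-\tfrac{2}{n_0})^{n_0-1}\mu^{-(n_0-1)}$ stays bounded away from zero uniformly in $n_0$, which is exactly what makes this truncation survive all $n_0$ compositions. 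Once the time domain is confined to $[\sigma_1,\epsilon^{-1}\sigma_1]$, the \emph{bounded} ordered integral has volume $\simeq(\epsilon^{-1}\sigma_1)^{n_0-1}/(n_0-1)!$, and the $\sigma_j^{-1}$ factors cancel the $\sigma_1^{n_0-1}$: this single source yields both $\epsilon^{-(n_0-1)}$ and $1/(n_0-1)!$ at once. Your claim that $\mu$ is chosen so the chain ``stays on-diagonal without destroying the factorial'' has the mechanism inverted: it is precisely the chain's eventually \emph{leaving} the allowed frequency window when $\sigma_{n_0}/\sigma_1$ grows too large that truncates the time integration and produces the factorial.
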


\begin{proof}
From Lemma \ref{lem:Kccd} we immediately obtain the estimate
\begin{align*}
\|\mcK_1^\epsilon f\|_Y&=\|1_{[0,\epsilon)}|\cdot|^\frac14 |\cdot|^{-\frac14}\mcK_{n_0}^d f\|_Y
\lesssim \|1_{[0,\epsilon)}|\cdot|^\frac14\|_{L^\infty(0,\infty)}\|f\|_Y \\
&=\epsilon^\frac14 \|f\|_Y
\end{align*}
and analogously, $\|\mcK_3^\epsilon\|_Y\lesssim \epsilon^\frac14$, uniformly in $n_0\geq 100$.
With $\omega(s):=s^{1+\frac{1}{\nu}}$ we have
\[ \mcA_\epsilon x(\tau,\sigma)=2\int_\tau^\infty \hat H_c(\tau,\sigma,\xi)\beta(\sigma)
\mcK_1^\epsilon x(\sigma,\omega(\tau, \sigma)^2\xi)d\sigma \]
and $|\hat H_c(\tau,\sigma,\xi)\beta(\sigma)|\lesssim \omega(\tau, \sigma)\sigma^{-1}$ (cf.~the
proof of Lemma \ref{lem:offdiag}).
Consequently, Lemma \ref{lem:estB} yields the stated estimate for $\mcA_\epsilon$ and
the proof for $\mcC_\epsilon$ is identical.

In order to prove the remaining estimate note first that
\begin{align}
\label{eq:Bn0}
\Big (&\prod_{j=0}^{n_0-1}\mcB_{\mu^{-j}\epsilon}\Big )x(\sigma_0,\eta_0)\nonumber \\
&=\int_{\sigma_0}^\infty \int_0^\infty \cdots \int_{\sigma_{n_0-1}}^\infty\int_0^\infty
x(\sigma_{n_0},\eta_{n_0})\prod_{j=0}^{n_0-1}\Big [2\hat H_c(\sigma_j,\sigma_{j+1},\eta_j)\beta(\sigma_{j+1})
\nonumber \\
&\quad \times K_2^{\mu^{-j}\epsilon}\left (\omega(\sigma_j,\sigma_{j+1})^2\eta_j,\eta_{j+1}
\right ) \Big ]d\eta_{n_0}d\sigma_{n_0}\cdots d\eta_1 d\sigma_1.
\end{align}
Now we are going to exploit the following observation.
Consider the expression
\begin{equation}
\label{eq:K2K2} K_2^\epsilon(\omega(\sigma_0, \sigma_1)^2\eta_0,\eta_1)K_2^{\mu^{-1}\epsilon}
(\omega(\sigma_1, \sigma_2)^2\eta_1,\eta_2)
\end{equation}
which appears in the integrand of \eqref{eq:Bn0}.
Assume $\sigma_0, \sigma_1$ to be fixed and suppose
we want to perform the integration with respect
to $\sigma_2$.
As $\sigma_2\to\infty$ we must have $\eta_1\to \infty$ in order to stay in the support of
\eqref{eq:K2K2}.
However, since $K_2^\epsilon$ is supported near the diagonal, this also entails
$\omega(\sigma_0, \sigma_1)^2\eta_0 \to \infty$ and we therefore necessarily leave
the support of \eqref{eq:K2K2}.
Hence, it is not necessary to integrate all the way up to infinity.
In order to quantify this argument we return to Eq.~\eqref{eq:Bn0} and note that on the support
of the integrand we have $\omega(\sigma_j, \sigma_{j+1})^2\frac{\eta_j}{\eta_{j+1}}
\geq 1-\frac{2}{n_0}$ for all $j\in \{0,1,\dots,n_0-1\}$. This implies
\begin{equation}
\label{eq:estprod}
 (1-\tfrac{2}{n_0})^{n_0-1}\leq \prod_{j=0}^{n_0-2}\omega(\sigma_j, \sigma_{j+1})^2
\tfrac{\eta_j}{\eta_{j+1}}=\omega(\sigma_0, \sigma_{n_0-1})^2 \tfrac{\eta_0}{\eta_{n_0-1}}.
\end{equation}
On the other hand, we have $\omega(\sigma_{n_0-1}, \sigma_{n_0})^2 \eta_{n_0-1}
\geq \mu^{-(n_0-1)}\epsilon$ and $\omega(\sigma_0, \sigma_1)^2 \eta_0\leq \epsilon^{-1}$
on the support of the integrand in Eq.~\eqref{eq:Bn0}.
By inserting these two estimates into \eqref{eq:estprod} we find
\[ (1-\tfrac{2}{n_0})^{n_0-1}\leq \omega(\tfrac{\sigma_1}{\sigma_{n_0}})^2 \mu^{n_0-1}\epsilon^{-2} \]
which yields $\sigma_{n_0}^{2(1+\frac{1}{\nu})}\lesssim \sigma_1^{2(1+\frac{1}{\nu})}\epsilon^{-2}$ for
all $n_0\geq 100$.
Hence, we obtain the crude bound $\sigma_{n_0}\leq \sigma_1 \epsilon^{-1}$ for all
$n_0\geq 100$ (provided $\epsilon>0$ is sufficiently small) and, since $\sigma_0 \leq \sigma_1
\leq \dots \leq \sigma_{n_0}$, we have in fact $\sigma_j \leq \sigma_1\epsilon^{-1}$
for all $j \in \{2,3,\dots,n_0\}$.
Consequently, upon writing
\[ \mcJ_a x(\tau,\xi):=2\int_\tau^{a}\hat H_c(\tau,\sigma,\xi)
\beta(\sigma)x(\sigma, \omega(\tau, \sigma)^2\xi)d\sigma \]
and by defining
\[ y(\tau,\xi):=\prod_{j=1}^{n_0-1}
\left ( \mcJ_{\epsilon^{-1}\tau} \mcK_2^{\mu^{-j}\epsilon}\right )x(\tau,\xi), \]
we obtain
\[ \Big (\prod_{j=0}^{n_0-1}\mcB_{\mu^{-j}\epsilon}\Big ) x(\tau,\xi)
=\mcB_\epsilon y(\tau,\xi). \]
Therefore, it suffices to prove an appropriate bound for $y(\tau,\xi)$.
By H\"older's inequality, Fubini, and the fact that
$|\hat H_c(\tau,\sigma,\xi)|\lesssim \omega_{\nu}(\tfrac{\tau}{\sigma})$,
we infer
\begin{equation}
\label{eq:estLp}
\|\mcJ_{\epsilon^{-1}\tau} x(\tau,\cdot)\|_{L^p}^p\lesssim \tau^{-1}
\int_\tau^{\epsilon^{-1}\tau}
\omega_{\nu}(\tfrac{\tau}{\sigma})^{p-2}\|x(\sigma,\cdot)\|_{L^p}^p d\sigma
\end{equation}
and similarly,
\begin{equation}
\label{eq:estL2}
\|\mcJ_{\epsilon^{-1}\tau}x(\tau,\cdot)\langle \cdot\rangle^\frac18 \|_{L^2_\rho}^2
\lesssim \tau^{-\delta_0}\int_\tau^{\epsilon^{-1}\tau} \omega_{\nu}(\tfrac{\tau}{\sigma})^{2-\frac72}
\sigma^{-1+\delta_0}
\|x(\sigma,\cdot)\langle \cdot \rangle^\frac18\|_{L^2_\rho}^2 d\sigma
\end{equation}
where $\delta_0>0$ is chosen such that $\alpha\geq \frac34(1+\frac{1}{\nu})+\delta_0$, i.e.,
such that the integral converges (cf.~the proof of Lemma \ref{lem:estB}).
By Lemma \ref{lem:Kccd} we have the same bounds for $\mcJ_{\epsilon^{-1}\tau}\mcK_2^{\mu^{-j}\epsilon}$,
$j \in \{0,\dots,n_0-1\}$.
Consequently, \eqref{eq:estLp} implies
\begin{align*}
 \|y(\sigma_1,\cdot)\|_{L^p}^p&\leq C^{n_0-1}\int_{\sigma_1}^{\epsilon^{-1}\sigma_1}
\int_{\sigma_2}^{\epsilon^{-1}\sigma_1}\cdots \int_{\sigma_{n_0-1}}^{\epsilon^{-1}\sigma_1}
\prod_{j=1}^{n_0-1}\left (\omega_{\nu}(\tfrac{\sigma_j}{\sigma_{j+1}})^{p-2}\sigma_j^{-1}\right ) \\
&\quad \times \|x(\sigma_{n_0},\cdot)\|_{L^p}^p d\sigma_{n_0} d\sigma_{n_0-1}\dots d\sigma_2 \\
&\leq \frac{C^{n_0-1}\epsilon^{-(n_0-1)}}{(n_0-1)!}\sigma_1^{-p\alpha}\|x\|_{\mcY^\alpha}^p
\end{align*}
where $C>0$ is some absolute constant.
By the same argument one obtains an analogous estimate for $\|y(\sigma_1,\cdot)\|_{L^2_\rho}$
and the proof is finished.
\end{proof}

Next, we prove the following crucial orthogonality relations.

\begin{lem}
\label{lem:orth}
For any sufficiently small $\epsilon>0$ and $n_0\geq 4$ we have
\[ \mcA_\epsilon \mcB_{\mu\epsilon}=0, \quad
\mcB_{\mu\epsilon} \mcC_\epsilon=0 \]
as well as
\[ \mcA_\epsilon \mcC_\epsilon=\mcA_{\mu\epsilon} \mcC_\epsilon
= \mcA_{\epsilon} \mcC_{\mu\epsilon}=0 \]
where $\mu:=1+\frac{4}{n_0}$.
\end{lem}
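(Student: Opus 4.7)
The lemma is a pure support-disjointness statement rather than an analytic estimate, so the strategy is to expand each of the five compositions as an iterated integral, read off the chain of support constraints coming from the kernels $K_j^\epsilon$ together with the $\omega$-rescalings produced by $\hat{\mcD}_c\mcH_c\beta$, and check that for $n_0\geq 4$ the resulting constraints on a single intermediate spectral variable $\eta_1$ are already incompatible.

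Two preliminary facts drive everything. First, since $K_{n_0}^d(\xi,\eta)=\chi(n_0(\xi/\eta-1))K_{cc}(\xi,\eta)$ is supported in $\{|\xi/\eta-1|\leq 2/n_0\}$, the kernel $K_1^\epsilon$ lives in $\{\xi<\epsilon,\ \eta\leq \xi/(1-2/n_0)\}$, $K_3^\epsilon$ in $\{\xi>\epsilon^{-1}\}$, and $K_2^\epsilon$ in the intermediate band $\{\epsilon\leq \xi\leq \epsilon^{-1}\}$ together with the same diagonal localization. Second, for $\mu=1+\tfrac{4}{n_0}$ and $n_0\geq 4$ one has the key arithmetic inequality
\[
\mu\bigl(1-\tfrac{2}{n_0}\bigr) = 1+\tfrac{2}{n_0}-\tfrac{8}{n_0^2}\geq 1,
\]
and moreover $\omega(\tau,\sigma)=\kappa(\tau)/\kappa(\sigma)\leq 1$ for $\tau\leq \sigma$, so every composition only shrinks (never expands) the next spectral variable. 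With these in hand, the five vanishings are mechanical. For $\mcA_\epsilon\mcB_{\mu\epsilon}$ the expansion produces the kernel product
\[
K_1^\epsilon\bigl(\omega(\tau,\sigma_1)^2\xi,\eta_1\bigr)\, K_2^{\mu\epsilon}\bigl(\omega(\sigma_1,\sigma_2)^2\eta_1,\eta_2\bigr);
\]
the first factor forces $\eta_1\leq \epsilon/(1-2/n_0)$, while the second forces $\omega(\sigma_1,\sigma_2)^2\eta_1\geq \mu\epsilon$ and hence $\eta_1\geq \mu\epsilon$, which are incompatible by the displayed inequality. For $\mcB_{\mu\epsilon}\mcC_\epsilon$ the outer $K_2^{\mu\epsilon}$-factor gives $\eta_1\leq (\mu\epsilon)^{-1}/(1-2/n_0)\leq \epsilon^{-1}$ (again by the inequality), while the inner $K_3^\epsilon$-factor requires $\eta_1>\epsilon^{-1}$. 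Finally, the three $\mcA\cdot\mcC$ identities are easier still: the outer $K_1$-type factor confines $\eta_1=O(\epsilon)$ while the inner $K_3$-type factor requires $\eta_1>O(\epsilon^{-1})$, and these ranges are disjoint for all sufficiently small $\epsilon$.

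The only point that genuinely needs care is bookkeeping — keeping straight which intermediate variable gets rescaled by which $\omega(\sigma_j,\sigma_{j+1})^2$ and in which direction each $K_j^\epsilon$-support inequality runs. However, in every one of the five compositions the $\omega$-factors are $\leq 1$ and consistently push in the direction that sharpens rather than relaxes the contradiction, so no analytic step is needed: each relation reduces either to the single inequality $\mu(1-2/n_0)\geq 1$ or to the triviality that $O(\epsilon)$ and $O(\epsilon^{-1})$ are disjoint once $\epsilon$ is small.
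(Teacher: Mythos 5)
Your proposal is correct and follows essentially the same route as the paper: expand each composition as an iterated integral, read off the support constraints on the intermediate spectral variable from the kernels $K_j^\epsilon$ (using that $\omega(\tau,\sigma)\leq 1$ for $\tau\leq\sigma$ so each rescaling only shrinks the argument), and observe that the resulting constraints are incompatible because $\mu(1-2/n_0)\geq 1$ for $n_0\geq 4$, while the $\mcA\cdot\mcC$ identities reduce to the disjointness of $O(\epsilon)$ and $O(\epsilon^{-1})$ for $\epsilon$ small.
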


\begin{proof}
Explicitly, we have
\[ \mcB_\epsilon x(\tau,\xi)=2\int_\tau^\infty \hat H_c(\tau,\sigma,\xi)\beta(\sigma)
\int_0^\infty K_2^\epsilon(\omega(\tau, \sigma)^2 \xi,\eta_2)x(\sigma,\eta_2)d\eta_2 d\sigma \]
where, as before, $\omega(s_{1}, s_2)=\kappa(s_1)\kappa^{-1}(s_2)$.
Furthermore,
\begin{align*}
\mcK_1^\epsilon \mcB_{\mu\epsilon}x(\tau,\xi)&=
\int_0^\infty K_1^\epsilon(\xi,\eta_1)\mcB_{\mu\epsilon}x(\tau,\eta_1)d\eta_1
\end{align*}
and thus, in order to prove $\mcA_\epsilon \mcB_{\mu\epsilon}=0$ it suffices to show that
\begin{equation}
\label{eq:K1K20}
K_1^\epsilon(\xi,\eta_1)K_{2}^{\mu\epsilon}
(\omega(\tau, \sigma)^2 \eta_1,\eta_2)=0
\end{equation}
for all $\xi,\eta_1,\eta_2\geq 0$ and $\tau \leq \sigma$.
Recall that
\[ K_{1}^\epsilon(\xi,\eta_1)=1_{[0,\epsilon)}(\xi)\chi(n_0(\tfrac{\xi}{\eta_1}-1))
K_{cc}(\xi,\eta_1) \]
where $\chi(x)\not=0$ only if $|x|\leq 2$
and thus, $K_{1}^\epsilon(\xi,\eta_1)\not=0$ only if
\begin{equation}
\label{eq:condeta11}
\eta_1\leq (1-\tfrac{2}{n_0})^{-1}\xi<
(1-\tfrac{2}{n_0})^{-1}\epsilon.
\end{equation}
On the other hand, we have
$K_{2}^{\mu\epsilon}(\omega(\tau, \sigma)^2\eta_1,\eta_2)\not=0$
only if $(1+\frac{4}{n_0})\epsilon\leq \omega(\tau, \sigma)^2\eta_1\leq \eta_1$
and since $1+\frac{4}{n_0}\geq (1-\frac{2}{n_0})^{-1}$ for all $n_0\geq 4$, this condition is incompatible
with \eqref{eq:condeta11} which proves \eqref{eq:K1K20}.

Similarly, to see that $\mcB_{\mu\epsilon} \mcC_\epsilon=0$, we consider
the product kernel
\[ K_{2}^{\mu\epsilon}(\xi,\eta_1)K_{3}^\epsilon(\omega(\tau, \sigma)^2\eta_1,\eta_2). \]
The second factor is non-vanishing only if $\eta_1>\epsilon^{-1}$ whereas on the support of the first factor
we have
\[ \eta_1\leq (1-\tfrac{2}{n_0})^{-1}\xi\leq (1-\tfrac{2}{n_0})^{-1}(1+\tfrac{4}{n_0})^{-1}\epsilon^{-1} \]
and $(1-\frac{2}{n_0})^{-1}(1+\frac{4}{n_0})^{-1}\leq 1$ for all $n_0\geq 4$. This implies
the desired $\mcB_{\mu\epsilon} \mcC_\epsilon=0$.
The remaining assertions are immediate provided $\epsilon>0$ is sufficiently small.
\end{proof}

Now we can show that $(2\hat{\mcD}_c \mcH_c \beta \mcK_{n_0}^d)^{2n_0}$ has small
operator norm on $\mcY^\alpha$ provided $n_0$ is sufficiently large.

\begin{lem}
\label{lem:diag}
Let $\alpha>\frac34(1+\frac{1}{\nu})$ and $\delta_0>0$. Then
\[ \|(2\hat{\mcD}_c \mcH_c \beta \mcK_{n_0}^d)^{2n_0}\|_{\mcY^\alpha}<\delta_0 \]
provided $n_0 \in \N$ is sufficiently large.
\end{lem}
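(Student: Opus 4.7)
The plan is to expand $\mcS^{2n_0}$, where $\mcS := 2\hat{\mcD}_c \mcH_c \beta \mcK_{n_0}^d$, using the decomposition $\mcK_{n_0}^d = \mcK_1^{\epsilon_j} + \mcK_2^{\epsilon_j} + \mcK_3^{\epsilon_j}$ at the $j$-th factor with a position-dependent choice
\[
\epsilon_j := \mu^{j-1}\epsilon, \qquad j = 1, \ldots, 2n_0,
\]
where $\mu = 1+4/n_0$ and $\epsilon > 0$ is small, to be fixed at the end. This is legitimate since $\mcK_{n_0}^d$ itself is independent of the auxiliary cutoff. Because $\epsilon_{j+1} = \mu \epsilon_j$ at every transition, Lemma \ref{lem:orth} applies \emph{uniformly}: for each $j$ one has $\mcA_{\epsilon_{j+1}} \mcC_{\epsilon_j} = \mcA_{\mu \epsilon_j} \mcC_{\epsilon_j} = 0$ and $\mcB_{\epsilon_{j+1}} \mcC_{\epsilon_j} = \mcB_{\mu \epsilon_j} \mcC_{\epsilon_j} = 0$. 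The remaining relation $\mcA_\epsilon \mcB_{\mu\epsilon} = 0$ would require the opposite monotonicity and will not be used.

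Expanding $\mcS^{2n_0} = \sum_{\vec T} T_{2n_0} \cdots T_1$ over all $\vec T \in \{\mcA_{\epsilon_j}, \mcB_{\epsilon_j}, \mcC_{\epsilon_j}\}^{2n_0}$, these orthogonalities force every surviving word to cluster its $\mcC$'s at the leftmost positions: whenever $T_j = \mcC$ one must have $T_{j'} = \mcC$ for every $j' > j$. Hence the surviving configurations are precisely
\[
\underbrace{\mcC_{\epsilon_{2n_0}} \cdots \mcC_{\epsilon_{2n_0 - k_{\mcC} + 1}}}_{k_{\mcC} \text{ factors}} W, \qquad W \in \{\mcA_{\epsilon_j}, \mcB_{\epsilon_j}\}^{2n_0 - k_{\mcC}},
\]
parametrized by $k_{\mcC} \in \{0, \ldots, 2n_0\}$ and the binary word $W$.

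To bound such a term I would first note that $\mu^{2n_0} \leq e^8$, so $\epsilon_j \in [\epsilon, e^8\epsilon]$ for every $j \leq 2n_0$, and consequently $\|\mcA_{\epsilon_j}\|, \|\mcC_{\epsilon_j}\| \leq C_1 \epsilon^{1/4}$ by Lemma \ref{lem:ABC}, with $C_1 = C_1(\nu)$. I then split $W$ into its maximal $\mcB$-blocks separated by single $\mcA$'s. The key observation is that a $\mcB$-block occupying positions $j, \ldots, j+b-1$ reads textually as
\[
\mcB_{\epsilon_{j+b-1}} \mcB_{\mu^{-1}\epsilon_{j+b-1}} \cdots \mcB_{\mu^{-(b-1)}\epsilon_{j+b-1}},
\]
which is exactly the form in Lemma \ref{lem:ABC} with base parameter $\epsilon_{j+b-1} \geq \epsilon$. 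Hence the block contributes at most $[C^{b-1} \epsilon^{-(b-1)} / (b-1)!]^{1/p}$, which by Stirling behaves like $(Ce/(b\epsilon))^{(b-1)/p}$ and becomes small once $b \gtrsim 1/\epsilon$.

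The final step is the combinatorial sum. Parametrizing each surviving word by $(k_\mcC, \ell, m, b_1, \ldots, b_m)$, with $\ell$ the number of $\mcA$'s in $W$, $m \leq \ell + 1$ the number of $\mcB$-blocks, and $(b_1, \ldots, b_m)$ a composition of $2n_0 - k_\mcC - \ell$, each term is bounded by
\[
C_1^{k_\mcC + \ell}\, \epsilon^{(k_\mcC + \ell)/4} \prod_{i=1}^m \biggl[\frac{C^{b_i-1}}{\epsilon^{b_i-1}(b_i - 1)!}\biggr]^{1/p}.
\]
The main obstacle will be making this combinatorial bound effective: when $k_\mcC + \ell \gtrsim n_0$, the prefactor $\epsilon^{(k_\mcC + \ell)/4}$ must beat the binomial count of arrangements in $W$; when $k_\mcC + \ell$ is small, pigeonhole forces some $\mcB$-block of size $\gtrsim n_0/(\ell+1)$, and its Stirling-driven factorial decay must dominate the count of compositions. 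Fixing $\epsilon$ small enough that $C_1 \epsilon^{1/4} < 1/4$ and $Ce/\epsilon$ is a universal constant, and then letting $n_0 \to \infty$, should combine these two sources of smallness to yield $\|\mcS^{2n_0}\|_{\mcY^\alpha} < \delta_0$ for $n_0$ sufficiently large.
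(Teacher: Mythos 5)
Your decomposition differs from the paper's in one crucial respect, and the difference is fatal to the combinatorics. The paper chooses the cutoff parameter \emph{adaptively}: after peeling off an $\mcA_{\eps'}$ it decomposes the next $\mcK_{n_0}^d$ at parameter $\mu\eps'$, while after a $\mcB_{\eps'}$ it uses $\mu^{-1}\eps'$. This lets it invoke \emph{all} the orthogonality relations of Lemma~\ref{lem:orth} --- in particular $\mcA_\eps\mcB_{\mu\eps}=0$ --- and the surviving products are forced into the rigid shape $\mcC^{i}\mcB^{j}\mcA^{k}$, of which there are only $O(n_0^2)$. Your fixed increasing choice $\epsilon_j=\mu^{j-1}\epsilon$ deliberately forgoes $\mcA_\eps\mcB_{\mu\eps}=0$, so within each surviving word the $\{\mcA,\mcB\}$-part $W$ is \emph{completely unconstrained}; you are left with $\sim 2^{2n_0}$ nonzero terms, and you must beat that count by the triangle inequality.

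This cannot be done with the per-word bounds that Lemma~\ref{lem:ABC} supplies. The $\mcB$-block bound $\bigl[C^{b-1}\epsilon^{-(b-1)}/(b-1)!\bigr]^{1/p}$ is \emph{smaller than one only once $b\gtrsim \epsilon^{-1}$}; for $b$ of order $\epsilon^{-1}$ it is of order $e^{c/(p\epsilon)}\gg 1$, i.e.\ it \emph{amplifies} rather than damps. Consider the family of words with $\ell\sim 2n_0\epsilon$ many $\mcA$'s (so $\ell\to\infty$ as $n_0\to\infty$) spread so that every $\mcB$-block has size roughly $\epsilon^{-1}$. There are on the order of $\binom{2n_0}{\ell}\epsilon^{\ell/2}$ such words, and each carries the bound $a^\ell\,e^{c\ell/(p\epsilon)}$ with $a\sim\epsilon^{1/4}$. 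Summing and taking logarithms, the contribution behaves like
\[
\exp\!\Bigl(\frac{2n_0}{\kappa}\Bigl[\tfrac12\log\kappa + \log a + \tfrac{\kappa}{p}\Bigr]\Bigr),\qquad \kappa\sim\epsilon^{-1},
\]
and since $\tfrac12\log\kappa+\log a\sim \tfrac14\log(1/\epsilon)>0$ and $\kappa/p>0$ with $p$ fixed, the bracket is positive: the partial sum \emph{grows} geometrically in $n_0$. You correctly flag this balancing act as ``the main obstacle,'' but it is not an obstacle that can be overcome with the estimates at hand; no choice of the fixed parameters $\epsilon$ and $p$ makes it close, because $p$ must be chosen before $\epsilon$ and the required relation is $p\gtrsim 1/\epsilon$. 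The missing idea is precisely the paper's adaptive cutoff, which eliminates these problematic words altogether by ensuring that once an $\mcA$ appears every later factor must also be $\mcA$.
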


\begin{proof}
For brevity we write $\mcJ:=2\hat {\mcD}_c \mcH_c \beta \mcK_{n_0}^d$.
We have
\begin{align*}
 \mcJ^{2n_0}&=(\mcA_\epsilon+\mcB_\epsilon+\mcC_\epsilon)\mcJ^{2n_0-1} \\
 &=\mcA_\epsilon \mcJ^{2n_0-1}+\mcB_\epsilon \mcJ^{2n_0-1}+\mcC_\epsilon \mcJ^{2n_0-1}
 \end{align*}
 and consider each term separately.
 Furthermore, we set $\mu:=1+\frac{4}{n_0}$.
 For the first term we note that
 \[ \mcA_\epsilon \mcJ^{2n_0-1}=\mcA_\epsilon ( \mcA_{\mu\epsilon}+\mcB_{\mu \epsilon}
 +\mcC_{\mu \epsilon})\mcJ^{n_0-2}=\mcA_\epsilon \mcA_{\mu \epsilon}\mcJ^{2n_0-2} \]
 by Lemma \ref{lem:orth}.
 Thus, inductively we find
 \begin{equation}
 \label{eq:prodA}
  \mcA_\epsilon \mcJ^{2n_0-1}=\prod_{j=0}^{2n_0-1}\mcA_{\mu^j \epsilon}
  \end{equation}
 and Lemma \ref{lem:ABC} yields
 \begin{equation*}
  \|\mcA_\epsilon \mcJ^{2n_0-1}\|_{\mcY^\alpha}
 \leq \left (C\mu^{\frac{2n_0}{4}}\epsilon^{\frac14} \right )^{2n_0}.
 \end{equation*}
 For the second term we obtain
 \begin{align*} \mcB_\epsilon \mcJ^{2n_0-1}&=\mcB_\epsilon (\mcA_{\mu^{-1}\epsilon}
 +\mcB_{\mu^{-1} \epsilon}
 +\mcC_{\mu^{-1} \epsilon})\mcJ^{n_0-2} \\
 &=\mcB_\epsilon \mcA_{\mu^{-1}\epsilon} \mcJ^{2n_0-2}+
 \mcB_\epsilon \mcB_{\mu^{-1}\epsilon}
\mcJ^{2n_0-2}  \\
&=\mcB_\epsilon \prod_{j=0}^{2n_0-2}\mcA_{\mu^{-1+j}\epsilon}
+\mcB_\epsilon \mcB_{\mu^{-1}\epsilon}\mcJ^{2n_0-2}
 \end{align*}
 by Lemma \ref{lem:orth} and Eq.~\eqref{eq:prodA}.
 Inductively we see that this is a sum of $2n_0$ terms which consist of products of consecutive
 $\mcB$'s and consecutive $\mcA$'s.
 We thus may write $\mcB_\epsilon \mcJ^{2n_0-1}=\mcS_1+\mcS_2$ where $\mcS_1$
 contains all the terms with at most $n_0$ $\mcB$'s.
From Lemma \ref{lem:ABC} we obtain the bounds
 \begin{align*}
 \|\mcS_1\|_{\mcY^\alpha}\leq \left (C\mu^{\frac{n_0}{4}}\epsilon^\frac14 \right )^{n_0},
 \quad \|\mcS_2\|_{\mcY^\alpha}\leq \left [\frac{C^{n_0}\epsilon^{-n_0}}{n_0!} \right ]^\frac{1}{p}
 \end{align*}
 provided $n_0$ is sufficiently large.
 Finally, we have
 \[ \mcC_\epsilon \mcJ^{2n_0-1}=\mcC_\epsilon \mcA_\epsilon \mcJ^{2n_0-2}
 +\mcC_\epsilon \mcB_\epsilon \mcJ^{2n_0-2}+\mcC_\epsilon^2 \mcJ^{2n_0-2} \]
and thus, by the exact same token as before we obtain a decomposition
$C_\epsilon \mcJ^{2n_0-1}=\mcS_3+\mcS_4$ with the bounds
\[ \|\mcS_3\|_{\mcY^\alpha}\leq \left (C\mu^{\frac{n_0}{4}}\epsilon^\frac14 \right )^{n_0},
 \quad \|\mcS_4\|_{\mcY^\alpha}\leq \left [\frac{C^{n_0}\epsilon^{-n_0}}{n_0!} \right ]^\frac{1}{p}
 \]
 for sufficiently large $n_0$.
 Hence, by first choosing $\epsilon>0$ sufficiently small and then $n_0$ sufficiently large,
 the claim follows.
 \end{proof}

Now we can conclude the existence of $(1+2\hat{\mcD}_c \mcH_c \beta \mcK_{cc})^{-1}$.

\begin{cor}
\label{cor:invDHKc}
If $\alpha>\frac34(1+\frac{1}{\nu})$ then the
operator $1+2\hat{\mcD}_c \mcH_c \beta \mcK_{cc}$ has a bounded inverse on $\mcY^\alpha$.
\end{cor}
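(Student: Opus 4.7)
The plan is to invoke the standard Neumann-series trick outlined in Subsection~\ref{subsec:solmain}: it suffices to produce some power $N$ for which $\|\mathcal{T}^N\|_{\mathcal{Y}^\alpha}<1$, where $\mathcal{T}:=2\hat{\mathcal{D}}_c\mathcal{H}_c\beta\mathcal{K}_{cc}$. Once this is in hand, the identity
\[
(1+\mathcal{T})\sum_{j=0}^{N-1}(-\mathcal{T})^j = 1-(-\mathcal{T})^N
\]
gives both one-sided inverses of $1+\mathcal{T}$, hence two-sided invertibility on $\mathcal{Y}^\alpha$. The combinatorial input is the splitting $\mathcal{K}_{cc}=\mathcal{K}_{n_0}^d+\mathcal{K}_{n_0}^{nd}$, which induces $\mathcal{T}=\mathcal{T}_d+\mathcal{T}_{nd}$ with $\mathcal{T}_d:=2P\mathcal{K}_{n_0}^d$, $\mathcal{T}_{nd}:=2P\mathcal{K}_{n_0}^{nd}$, $P:=\hat{\mathcal{D}}_c\mathcal{H}_c\beta$. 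Expanding
\[
\mathcal{T}^N = 2^N\sum_{w\in\{d,nd\}^N} P\mathcal{K}_{w_1}P\mathcal{K}_{w_2}\cdots P\mathcal{K}_{w_N}
\]
produces $2^N$ words, and we show each is small.

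The key observation, which I will isolate first, is that every word $w$ with $w_i=nd$ for some $i<N$ contains the consecutive sub-triple $P\mathcal{K}_{n_0}^{nd}P$. Hence by Lemma~\ref{lem:offdiag} that word has norm bounded by $C^{N-2}\,n_0^4\tau_0^{-\epsilon}$, where $C$ is a uniform upper bound on $\|P\mathcal{K}_{n_0}^d\|_{\mathcal{Y}^\alpha}$ and $\|P\mathcal{K}_{n_0}^{nd}\|_{\mathcal{Y}^\alpha}$ (available from Proposition~\ref{prop:KAKc}, the definition of $P$, and Lemma~\ref{lem:estH}). Summing the at most $2^N$ such "bad" words therefore contributes at most $(4C)^N\,n_0^4\tau_0^{-\epsilon}$ to $\|\mathcal{T}^N\|_{\mathcal{Y}^\alpha}$. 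The only words escaping this sandwich argument are those whose unique possible $\mathcal{T}_{nd}$ sits at the last slot, namely $\mathcal{T}_d^N$ and $\mathcal{T}_d^{N-1}\mathcal{T}_{nd}$.

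To handle these two "good" words, I take $N=2n_0+1$ and write
\[
\mathcal{T}_d^N = \mathcal{T}_d\cdot \mathcal{T}_d^{2n_0},\qquad \mathcal{T}_d^{N-1}\mathcal{T}_{nd}=\mathcal{T}_d^{2n_0}\cdot \mathcal{T}_{nd},
\]
so that Lemma~\ref{lem:diag} produces $\|\mathcal{T}_d^{2n_0}\|_{\mathcal{Y}^\alpha}<\delta_0$ and each good word is bounded by $C\delta_0$. Now the parameters are chosen in the following order: first fix $\delta_0$ so that $2C\delta_0<\tfrac14$; next apply Lemma~\ref{lem:diag} to fix $n_0$ realizing this $\delta_0$, which simultaneously fixes $N=2n_0+1$; finally, with $n_0$ and $N$ fixed, choose $\tau_0$ so large (as permitted by Definition~\ref{def:spaces}) that $(4C)^N n_0^4\tau_0^{-\epsilon}<\tfrac14$. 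Adding the two estimates yields $\|\mathcal{T}^N\|_{\mathcal{Y}^\alpha}<\tfrac12<1$, completing the proof.

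The only real obstacle is bookkeeping: the bad-word count grows like $2^N$ and each word's naive norm is $C^N$, so the $(4C)^N$ pre-factor in front of $\tau_0^{-\epsilon}$ looks dangerous. The remedy is precisely the order of quantifiers above — $N$ must be fixed before $\tau_0$ — together with the fact that the smallness parameter $\tau_0^{-\epsilon}$ in Lemma~\ref{lem:offdiag} can be made arbitrarily small by enlarging the cut-off $\tau_0$ in the definition of the norm. All other estimates (boundedness of $P$, $\mathcal{K}_{n_0}^{d}$, and $\mathcal{K}_{n_0}^{nd}$ on $\mathcal{Y}^\alpha$) are routine consequences of Lemma~\ref{lem:estH} and Proposition~\ref{prop:KAKc}.
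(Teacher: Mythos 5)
Your proposal is correct and follows essentially the same route as the paper: split $\mcK_{cc}=\mcK_{n_0}^d+\mcK_{n_0}^{nd}$, expand the power of $\mcT=2\hat{\mcD}_c\mcH_c\beta\mcK_{cc}$ into words, use Lemma~\ref{lem:offdiag} for words containing the sandwich $\hat{\mcD}_c\mcH_c\beta\mcK_{n_0}^{nd}\hat{\mcD}_c\mcH_c\beta$, Lemma~\ref{lem:diag} for the rest, and close with a Neumann series. In fact you are slightly more careful than the paper on one point: the paper takes $N=2n_0$ and asserts that every one of the $2^{2n_0}-1$ non-diagonal terms contains the sandwich, which is not literally true for the single word $\mcT_d^{2n_0-1}\mcT_{nd}$, whose unique $\mcK_{n_0}^{nd}$ sits in the last slot. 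You correctly flag this exceptional word and repair the argument by taking $N=2n_0+1$, so that both escaping words $\mcT_d^N$ and $\mcT_d^{N-1}\mcT_{nd}$ contain $\mcT_d^{2n_0}$ as a factor and can be controlled by Lemma~\ref{lem:diag}. The order of quantifiers you insist on ($\delta_0$, then $n_0$ hence $N$, then $\tau_0$) is exactly the point that makes the $(4C)^N$ prefactor harmless and matches the paper's "first choosing $n_0$ sufficiently large and then $\tau_0$ sufficiently large." The bookkeeping of constants (e.g.\ $C\delta_0$ vs.\ $2C\delta_0$ and the precise exponent on $C$ in $C^{N-2}$) is a bit loose, but none of that affects the argument.
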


\begin{proof}
For brevity we write $\mcJ:=2\hat{\mcD}_c \mcH_c \beta \mcK_{cc}$ and decompose
\begin{align*} \mcJ^{2n_0}&=
(2\hat{\mcD}_c \mcH_c \beta \mcK_{n_0}^{nd}+2\hat{\mcD}_c \mcH_c \beta \mcK_{n_0}^d)^{2n_0}  \\
&=\mcS+(2\hat{\mcD}_c \mcH_c \beta \mcK_{n_0}^d)^{2n_0}
\end{align*}
where $\mcS$ consists of $2^{2n_0}-1$ terms, each of which containing the operator
$\hat{\mcD}_c \mcH_c \beta \mcK_{n_0}^{nd}\hat{\mcD}_c \mcH_c \beta$.
Hence, by first choosing $n_0$ sufficiently large and then $\tau_0$ sufficiently large
(depending on $n_0$), we obtain from Lemmas \ref{lem:offdiag} and \ref{lem:diag} the bound
\[ \|\mcJ^{2n_0}\|_{\mcY^\alpha}<1. \]
This implies
\[ \sum_{n=0}^\infty \|\mcJ^n\|_{\mcY^\alpha}=\sum_{k=0}^\infty \sum_{\ell=0}^{2n_0-1}
\|\mcJ^{2n_0k+\ell}\|_{\mcY^\alpha}\lesssim \sum_{k=0}^\infty \|\mcJ^{2n_0}\|_{\mcY^\alpha}^k
\lesssim 1 \]
and the claim follows.
\end{proof}

\subsection{The inverse of $1+2\hat{\mcD} \mcH \beta \mcK$}

Finally, we consider the matrix operator $1+2\hat{\mcD} \mcH \beta \mcK$ which explicitly
reads
\[ 1+2\hat{\mcD} \mcH \beta \mcK=\left (\begin{array}{cc}
1+2\hat{\mcD}_d \mcH_d \beta \mcK_{dd} & 2\hat{\mcD}_d \mcH_d \beta \mcK_{dc} \\
2\hat{\mcD}_c \mcH_c \beta \mcK_{cd} & 1+2 \hat{\mcD}_c \mcH_c \beta \mcK_{cc}
\end{array} \right ), \]
cf.~Eq.~\eqref{eq:matrixDHK}.

\begin{lem}
\label{lem:invmatrixDHK}
Let $\alpha_c>\frac34(1+\frac{1}{\nu})$ and $\alpha_c\leq \alpha_d <\alpha_c+1$.
Then the operator $1+2\hat{\mcD}\mcH \beta \mcK$ has a bounded
inverse on $\mcY^{\alpha_d,\alpha_c}$.
\end{lem}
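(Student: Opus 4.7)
The approach is a block factorization
\[
1+2\hat{\mcD}\mcH\beta\mcK \;=\; \mcM_{\mathrm{diag}}\bigl(I+\mcN\bigr),
\]
where $\mcM_{\mathrm{diag}}:=\mathrm{diag}\bigl(1+2\hat{\mcD}_d\mcH_d\beta\mcK_{dd},\;1+2\hat{\mcD}_c\mcH_c\beta\mcK_{cc}\bigr)$ and
\[
\mcN:=\mcM_{\mathrm{diag}}^{-1}\begin{pmatrix}0 & 2\hat{\mcD}_d\mcH_d\beta\mcK_{dc}\\ 2\hat{\mcD}_c\mcH_c\beta\mcK_{cd} & 0\end{pmatrix}.
\]
The diagonal factor is bounded and invertible on $\mcY^{\alpha_d,\alpha_c}$: its $(1,1)$ block is invertible thanks to the elementary bound $|\hat{\mcD}_d\mcH_d\beta\mcK_{dd}x_d(\tau)|\lesssim \tau_0^{-1}|x_d(\tau)|$ already recorded at the end of the subsection ``Time decay of the inhomogeneous term,'' while the $(2,2)$ block is invertible by Corollary~\ref{cor:invDHKc} (applied to both $\alpha=\alpha_c$ and $\alpha=\alpha_d$, which is legitimate since $\alpha_d\geq \alpha_c>\tfrac34(1+\tfrac1\nu)$). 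It therefore suffices to invert $I+\mcN$ on $\mcY^{\alpha_d,\alpha_c}$.

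First, one checks that $\mcN$ is bounded on $\mcY^{\alpha_d,\alpha_c}$. The off-diagonal blocks admit the explicit representations $\mcK_{cd}a(\xi)=a\,K_{cd}(\xi)$ with $K_{cd}\in Y$ (from the formula preceding Theorem~\ref{thm:K}, together with the rapid decay of $[R\partial_R-1]\phi(R,\xi_d)$ and the oscillatory asymptotics of $\phi(R,\xi)$ in Theorem~\ref{thm:spec}) and $\mcK_{dc}:Y\to\C$ bounded (Theorem~\ref{thm:K}). Combined with Lemma~\ref{lem:estH}, which preserves decay in the discrete slot but loses one power of $\tau$ in the continuous slot, one checks: the $(1,2)$ entry of $\mcN$ sends a continuous input of size $\tau^{-\alpha_c}$ to a scalar of size $\tau^{-\alpha_c-1}$, which lands in the target $\mcY^{\alpha_d}$-class precisely because $\alpha_d<\alpha_c+1$; the $(2,1)$ entry sends a discrete input of size $\tau^{-\alpha_d}$ to a $Y$-valued function of size $\tau^{-\alpha_d}$ (the $\tau^{-1}$ loss of $\hat{\mcD}_c\mcH_c$ being exactly compensated by the $\tau^{-1}$ gain from $\beta$), which lands in $\mcY^{\alpha_c}$ because $\alpha_c\leq \alpha_d$. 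Both of the sharp exponent hypotheses are thus used precisely to keep $\mcN$ bounded.

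The crux is the bound $\|\mcN^2\|_{\mcY^{\alpha_d,\alpha_c}}\lesssim \tau_0^{-1}$. Since $\mcN$ is block off-diagonal, $\mcN^2$ is block diagonal, and each diagonal block is a composition of the two off-diagonal operators traversed once each. Redoing the book-keeping of the previous paragraph, every traversal supplies one further factor of $\beta(\tau)\sim\tau^{-1}$ beyond what is needed to match the target decay rate: concretely, the $(1,1)$ block of $\mcN^2$ maps discrete inputs of size $\tau^{-\alpha_d}$ to discrete outputs of size $\tau^{-\alpha_d-1}$, and the $(2,2)$ block maps $\mcY^{\alpha_c}$ continuous inputs to continuous outputs of size $\tau^{-\alpha_c-1}$, both uniformly. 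This yields $\|\mcN^{2k}\|_{\mcY^{\alpha_d,\alpha_c}}\leq C^k\tau_0^{-k}$; combined with boundedness of $\mcN$ itself, the Neumann series $(I+\mcN)^{-1}=\sum_{n\geq 0}(-\mcN)^n$ converges in operator norm for $\tau_0$ sufficiently large, and composing with $\mcM_{\mathrm{diag}}^{-1}$ produces the desired bounded inverse of $1+2\hat{\mcD}\mcH\beta\mcK$.

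The main obstacle is exactly why one must iterate to $\mcN^2$: in the endpoint case $\alpha_c=\alpha_d$ the single operator $\mcN$ has no smallness (its $(2,1)$ block is only bounded, not small, on $\mcY^{\alpha_d,\alpha_c}$), so a direct Neumann argument for $I+\mcN$ fails. Squaring forces a second pass through $\beta\sim\tau^{-1}$, which is where the decisive $\tau_0^{-1}$ gain appears; the tight two-sided balance $\alpha_c\leq \alpha_d<\alpha_c+1$ in the hypothesis is dictated precisely by the need to keep $\mcN$ itself bounded while still leaving room for this $\tau_0^{-1}$ gain after squaring.
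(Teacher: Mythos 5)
Your proof is correct and takes essentially the same block-decomposition approach as the paper: the paper likewise inverts the diagonal factor (via Corollary~\ref{cor:invDHKc} and the elementary discrete bound) and reduces everything to smallness of the off-diagonal coupling, the only difference being that the paper records smallness of $\mcJ_{dc}:=2\hat{\mcD}_d\mcH_d\beta\mcK_{dc}$ directly (operator norm $\lesssim\tau_0^{\alpha_d-\alpha_c-1}$) inside a Schur-complement reduction, whereas you package the same fact as $\|\mcN^2\|_{\mcY^{\alpha_d,\alpha_c}}\lesssim\tau_0^{-1}$ and sum the Neumann series for $(I+\mcN)^{-1}$. One minor imprecision: the heuristic that ``every traversal supplies one further factor of $\beta\sim\tau^{-1}$'' is not literal---in the $(2,1)$ block the $\tau^{-1}$ from $\beta$ cancels against the rate loss of $\hat{\mcD}_c\mcH_c$ (as you yourself note when verifying boundedness of $\mcN$), so only the $(1,2)$ block contributes the net $\tau^{-1}$ gain---but the explicit rate count you give immediately afterwards is correct and is exactly what matters.
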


\begin{proof}
For brevity we write
\begin{align*}
\mcJ_{dd}&:=1+2\hat{\mcD}_d \mcH_d \beta \mcK_{dd}&
\mcJ_{dc}&:=2\hat{\mcD}_d \mcH_d \beta \mcK_{dc} \\
\mcJ_{cd}&:=2\hat{\mcD}_c \mcH_c \beta \mcK_{cd} &\mcJ_{cc}&:=
1+2 \hat{\mcD}_c \mcH_c \beta \mcK_{cc}.
\end{align*}
From Corollary \ref{cor:invDHKc} and the comment following Eq.~\eqref{eq:matrixDHK}
we know that $\mathrm{diag}(\mcJ_{dd},\mcJ_{cc})^{-1}=\mathrm{diag}(\mcJ_{dd}^{-1}, \mcJ_{cc}^{-1})$
exists as a bounded operator
on $\mcY^{\alpha_d,\alpha_c}$.
Consequently, the equation
\[ \left (\begin{array}{cc} \mcJ_{dd} & \mcJ_{dc} \\
\mcJ_{cd} & \mcJ_{cc} \end{array} \right )\left (\begin{array}{c} x_d \\ x \end{array}\right)
=\left (\begin{array}{c} y_d \\ y \end{array} \right ) \]
implies
\begin{align*}
(1-\mcJ_{dd}^{-1}\mcJ_{dc} \mcJ_{cc}^{-1} \mcJ_{cd})x_d&=\mcJ_{dd}^{-1}(y_d-\mcJ_{dc}
\mcJ_{cc}^{-1} y) \\
(1-\mcJ_{cc}^{-1}\mcJ_{cd}\mcJ_{dd}^{-1}\mcJ_{dc})x&=\mcJ_{cc}^{-1}(y-\mcJ_{cd}\mcJ_{dd}^{-1}y_d)
\end{align*}
and it suffices to prove smallness of $\mcJ_{dc}$.
From Theorem \ref{thm:K} and Lemma \ref{lem:estH} we obtain
\[ |\mcJ_{dc}x(\tau)|=2|\hat{\mcD}_d \mcH_d \beta \mcK_{dc} x(\tau)|
\lesssim \tau^{-\alpha_c-1}\sup_{\tau>\tau_0}\tau^{\alpha_c} \|x(\tau,\cdot)\|_Y \]
which yields
\[ \sup_{\tau>\tau_0}\tau^{\alpha_d}|\mcJ_{dc}x(\tau)|\lesssim \tau_0^{\alpha_d-\alpha_c-1}
\|x\|_{\mcY^{\alpha_c}} \]
and we obtain smallness by choosing $\tau_0$ sufficiently large since
$\alpha_d-\alpha_c-1<0$ by assumption.
\end{proof}

\subsection{The inverse of $1+2\mcH\beta \mcK \hat{\mcD}$}

For the following it is also necessary to invert the operator $1+2\mcH\beta \mcK \hat{\mcD}$.
As before, we first consider the difficult continuous component $\mcH_c \beta \mcK_{cc}\hat{\mcD}_c$
which is explicitly given by
\begin{align}
\label{eq:HbKD}
\mcH_c\beta\mcK_{cc}\hat {\mcD}_c x(\tau,\xi)&=\int_\tau^\infty
H_c(\tau,\sigma,\xi)\beta(\sigma)\mcK_{cc}\hat{\mcD}_c x(\sigma,\omega(\tau, \sigma)\xi)d\sigma
\nonumber \\
&=\int_\tau^\infty H_c(\tau,\sigma,\xi)\beta(\sigma)
\int_0^\infty K_{cc}(\omega(\tau, \sigma)^2 \xi,\eta)\hat {\mcD}_c x(\sigma,\eta)d\eta d\sigma
\end{align}
with $\omega(s_1, s_2)=\kappa(s_1)\kappa^{-1}(s_2)$.
Now recall that
\[ \hat{\mcD}_c x(\tau,\xi)=\partial_1 x(\tau,\xi)+\beta(\tau) \left [-2\xi \partial_2 x(\tau,\xi)
-(\tfrac52+\tfrac{\xi\rho'(\xi)}{\rho(\xi)})x(\tau,\xi) \right ]\]
and
\[ \hat {\mcD}_c x(\tau,\xi)=\kappa(\tau)^\frac52 \rho(\xi)^{-\frac12}[\partial_\tau-2\beta(\tau)\xi
\partial_\xi]\left [\kappa(\tau)^{-\frac52}\rho(\xi)^\frac12 x(\tau,\xi)\right ] \]
where
\[  \beta(\tau)=\tfrac{\kappa'(\tau)}{\kappa(\tau)}
\sim \tau^{-1}. \]
Hence, if we set $y(\tau,\xi):=\kappa(\tau)^{-\frac52}\rho(\xi)x(\tau,\xi)$ we may write
\begin{equation}
\label{eq:conj}
 \hat {\mcD}_c x(\tau,\xi)=\kappa(\tau)^\frac52 \rho(\xi)^{-\frac12}\tilde{\mcD}_c
y(\tau,\xi)
\end{equation}
where $\tilde {\mcD}_c y(\tau,\xi):=\partial_1 y(\tau,\xi)-2\beta(\tau)\partial_2 y(\tau,\xi)$.
Now observe that
\[ \tilde{\mcD}_c y(\sigma,\omega(\tau, \sigma)^2\eta)=
\partial_\sigma y(\sigma,\omega(\tau, \sigma)^2\eta) \]
since $\partial_\sigma \omega(\tau, \sigma)^2=-2\beta(\sigma)\omega(\tau, \sigma)^2$.
Thus, from the definition of $y$ and Eq.~\eqref{eq:conj} we infer the identity
\begin{align}
\label{eq:Dcident}
\hat{\mcD}_c x(\sigma,\omega(\tau, \sigma)^2\eta)
&=\kappa(\sigma)^\frac52 \rho\left (\omega(\tau, \sigma)^2\eta\right)^{-\frac12} \nonumber \\
&\quad \times \partial_\sigma \left [ \kappa(\sigma)^{-\frac52}\rho\left (
\omega(\tau, \sigma)^2\eta\right )^\frac12 x(\sigma,\omega(\tau, \sigma)^2\eta) \right ].
\end{align}
Consequently, after the change of variable $\eta\mapsto \omega(\tau, \sigma)^2\eta$,
Eq.~\eqref{eq:HbKD} can be rewritten as
\begin{align}
\label{eq:HbKD2}
\mcH_c&\beta\mcK_{cc}\hat {\mcD}_c x(\tau,\xi)=\int_\tau^\infty \int_0^\infty
H_c(\tau,\sigma,\xi)\beta(\sigma)\omega(\tau, \sigma)^2
K_{cc}(\omega(\tau, \sigma)^2\xi,\omega(\tau, \sigma)^2\eta) \nonumber \\
& \times \kappa(\sigma)^\frac52 \rho\left (\omega(\tau, \sigma)^2\eta\right)^{-\frac12}
\partial_\sigma \left [ \kappa(\sigma)^{-\frac52}\rho\left (
\omega(\tau, \sigma)^2\eta\right )^\frac12 x(\sigma,\omega(\tau, \sigma)^2\eta) \right ]
d\eta d\sigma.
\end{align}

\begin{lem}
\label{lem:invHKD}
Let $\alpha> 1+\frac34(1+\frac{1}{\nu})$. Then the operator
$1+2\mcH_c \beta \mcK_{cc}\hat{\mcD}_c$ has a bounded inverse on $\mcX^\alpha$.
\end{lem}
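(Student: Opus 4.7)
The plan is to follow the same Neumann series strategy used for the inversion of $1+2\hat{\mcD}_c \mcH_c \beta \mcK_{cc}$ in Corollary \ref{cor:invDHKc}, after first transferring the $\hat{\mcD}_c$ on the right of our operator onto the kernel via integration by parts. The whole point of the rewriting \eqref{eq:HbKD2} is that $\hat{\mcD}_c$ has been absorbed into a plain $\partial_\sigma$ acting on the conjugated quantity $\kappa(\sigma)^{-5/2}\rho(\omega(\tau,\sigma)^2\eta)^{1/2} x(\sigma,\omega(\tau,\sigma)^2\eta)$, which is exactly in the form suited for such an integration by parts.

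First, I would integrate by parts in $\sigma$ in \eqref{eq:HbKD2}. The boundary term at $\sigma=\tau$ vanishes identically, because $H_c(\tau,\tau,\xi)=0$ (the sine factor $\sin[\kappa(\tau)\xi^{1/2}\int_\tau^\sigma \kappa^{-1}(u)\,du]$ evaluates to $\sin 0$). The boundary at $\sigma=\infty$ also vanishes, using the decay of $x\in\mcX^\alpha$ with $\alpha>1+\tfrac{3}{4}(1+\tfrac{1}{\nu})$ together with the polynomial bound on the remaining kernel factors via Theorem \ref{thm:K} and the asymptotics of $H_c$, $\rho$, $\kappa$.

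Second, I would identify the leading structure of the bulk term. The dominant contribution comes from $\partial_\sigma$ hitting the sine in $H_c$: this produces $\kappa(\tau)\xi^{1/2}\kappa^{-1}(\sigma)\cos[\cdots]=\omega(\tau,\sigma)\xi^{1/2}\cos[\cdots]$, which together with the $\xi^{-1/2}\omega^{3/2}\rho^{1/2}/\rho^{1/2}$ prefactor converts $H_c$ into $\omega(\tau,\sigma)\hat{H}_c(\tau,\sigma,\xi)$, that is, the very kernel appearing in $\hat{\mcD}_c\mcH_c$. The other pieces in $\partial_\sigma$ --- falling on $\beta(\sigma)$, $\kappa(\sigma)^{5/2}$, $\omega(\tau,\sigma)^2$, $\rho(\omega^2\eta)^{-1/2}$, or $K_{cc}(\omega^2\xi,\omega^2\eta)$ --- each produce an extra factor of $\beta(\sigma)\sim\sigma^{-1}$, hence are subleading and harmless for the estimates of Lemma \ref{lem:estB}. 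The outcome is a representation
\[
\mcH_c\beta\mcK_{cc}\hat{\mcD}_c\, x(\tau,\xi) = -\widetilde{\mcS}\, x(\tau,\xi) + \text{lower order},
\]
where $\widetilde{\mcS}$ has the same structural form as $\hat{\mcD}_c\mcH_c\beta\mcK_{cc}$, augmented by the extra harmless damping $\omega(\tau,\sigma)\le 1$.

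Third, I would re-run the inversion scheme from the preceding subsections on this new operator. The splitting $\mcK_{cc}=\mcK_{n_0}^d+\mcK_{n_0}^{nd}$, the smoothing estimates of Lemmas \ref{lem:Kccnd} and \ref{lem:Kccd}, the orthogonality of Lemma \ref{lem:orth}, and the off-diagonal oscillation argument of Lemma \ref{lem:offdiag} all carry over essentially verbatim, since the only structural novelty is the additional factor $\omega(\tau,\sigma)\le 1$, which only improves matters. This yields smallness of $(2\mcH_c\beta\mcK_{cc}\hat{\mcD}_c)^{2n_0}$ on $\mcX^\alpha$ for $n_0$ large and $\tau_0$ sufficiently large depending on $n_0$, so that the Neumann series for $(1+2\mcH_c\beta\mcK_{cc}\hat{\mcD}_c)^{-1}$ converges in operator norm on $\mcX^\alpha$.

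The main obstacle is the careful bookkeeping in the $\mcX^\alpha$ norm (rather than $\mcY^\alpha$): this is where the $X$-variant of the smoothing estimate in Lemma \ref{lem:Kccd} --- stated for general $q\in(1,\infty)$ with $|\cdot|^a\langle\cdot\rangle^b$ weights --- becomes essential, since the $X$-norm carries the $|\cdot|^{1/2-\delta}\langle\cdot\rangle^{-1/2+\delta}$ weight on the $L^p$ component. Concretely, one must track the extra weight through the off-diagonal $L^p$ estimate of Lemma \ref{lem:offdiag}, and verify that the resulting gain $n_0^4\tau_0^{-\epsilon}$ (or analogous) still produces smallness, and that the diagonal iteration using $\mcA_\epsilon,\mcB_\epsilon,\mcC_\epsilon$ still enjoys the orthogonality needed to give $(2n_0)$-fold smallness on $\mcX^\alpha$. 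Once this is done, Lemma \ref{lem:invHKD} follows by the same Neumann series calculation as at the end of Corollary \ref{cor:invDHKc}.
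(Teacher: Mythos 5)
Your proposal follows the paper's proof: integrate by parts in $\sigma$ in \eqref{eq:HbKD2} (the boundary term at $\sigma=\tau$ vanishing since $H_c(\tau,\tau,\xi)=0$), split into a main term $\mcA_1 x=-\int_\tau^\infty \omega(\tau,\sigma)\hat H_c(\tau,\sigma,\xi)\beta(\sigma)\mcK_{cc}x(\sigma,\omega(\tau,\sigma)^2\xi)\,d\sigma$ plus a lower-order remainder, and then rerun the Neumann-series machinery of Lemmas~\ref{lem:offdiag} and \ref{lem:diag} on $\mcX^\alpha$. Two steps are described a bit too loosely. First, when $\partial_\sigma$ hits $K_{cc}(\omega(\tau,\sigma)^2\xi,\omega(\tau,\sigma)^2\eta)$ it is not enough to say you ``produce an extra factor of $\beta(\sigma)\sim\sigma^{-1}$'': the new kernel is $[\xi\partial_\xi+\eta\partial_\eta]K_{cc}$, whose structure must be identified --- the paper recognizes it as the same type as the commutator $[\mcA_c,\mcK_{cc}]$ and uses the $X\to Y$ mapping of Proposition~\ref{prop:KAKc}, combined with Lemma~\ref{lem:estH} and the $O(\sigma^{-2})$ prefactor, to obtain smallness of this piece in $\tau_0$. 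Second, the extra factor $\omega(\tau,\sigma)\le1$ in $\mcA_1$ is not a mere bonus ``which only improves matters'': the paper's ``Note carefully'' points out that it \emph{exactly} compensates for the stronger $\alpha$-threshold that Lemma~\ref{lem:estB} imposes once the $|\cdot|^{\frac12-\delta}$ weight of the $X$-norm is inserted (the $\max(|a|,|a+b|)$ term increases by about $\tfrac12$, and one power of $\omega$ cancels precisely this), so the compensation is load-bearing for the argument to close with $\alpha>1+\tfrac34(1+\tfrac1\nu)$. Neither is a fatal gap, but both deserve explicit verification rather than being absorbed into ``bookkeeping.''
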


\begin{proof}
Performing an integration by parts with respect to $\sigma$ in Eq.~\eqref{eq:HbKD2} and noting
that
\[ \partial_\sigma \left [\xi^{-\frac12}
\sin\left (\xi^{\frac{1}{2}}\kappa(\tau)\int_{\tau}^{\sigma}\kappa^{-1}(u)\,du\right ) \right ]
=\omega(\tau, \sigma)\cos\left (\xi^{\frac{1}{2}}\kappa(\tau)\int_{\tau}^{\sigma}\kappa^{-1}(u)\,du\right )
\]
we obtain the decomposition $\mcH_c \beta \mcK_{cc} \hat{\mcD}_c=\mcA_1+\mcA_2$ where
\[ \mcA_1 x(\tau,\xi)=-\int_\tau^\infty \omega(\tau, \sigma)\hat H_c(\tau,\sigma,\xi)\beta(\sigma)
\mcK_{cc}x(\sigma,\omega(\tau, \sigma)^2\xi)d\sigma \]
and
\[ \mcA_2 x(\tau,\xi)=\int_\tau^\infty H_c(\tau,\sigma,\xi)O(\sigma^{-2})
\tilde{\mcK}_{cc}x(\sigma,\omega(\tau, \sigma)^2\xi)d\sigma. \]
The kernel of the operator $\tilde{\mcK}_{cc}$ consists of a linear combination of
$\sigma$-derivatives of $K_{cc}(\omega(\tau, \sigma)^2\xi,\omega(\tau, \sigma)^2\eta)$
and is therefore of the same type as the kernel of the commutator $[\mcA_c,\mcK_{cc}]$
(cf.~the proof of Proposition \ref{prop:KAKc}).
In particular, $\tilde{\mcK}_{cc}$ maps the space $X$ to $Y$ (Proposition \ref{prop:KAKc})
and Lemma \ref{lem:estH} immediately yields
\[ \|\mcA_2 x\|_{\mcX^\alpha}\lesssim \nu^{2\delta}\tau_0^{-1+2\delta}\|x\|_{\mcX^\alpha}. \]
Consequently, smallness can be achieved by choosing $\tau_0$ sufficiently large (depending on $\nu$).
The operator $\mcA_1$, on the other hand, is of the same type as $\hat{\mcD}_c \mcH_c \beta \mcK_{cc}$ but
this time viewed as a map from $\mcX^\alpha$ to $\mcX^\alpha$.
However, this does not make any difference since the crucial Lemmas \ref{lem:estB},
\ref{lem:Kccnd} and \ref{lem:Kccd} are valid for the space $\mcX^\alpha$ (respectively
$X$) as well.
Note carefully that the stronger requirement $\alpha>1+\frac{11}{4}(1+\frac{1}{\nu})-\gamma$
in Lemma \ref{lem:estB} is exactly compensated by the additional factor
$\omega(\tau, \sigma)$ in $\mcA_1$.
Hence, the operator $\mcA_1$ can be treated in the exact same fashion as
$\hat{\mcD}_c\mcH_c \beta \mcK_{cc}$ and in particular, Lemmas \ref{lem:offdiag} and
\ref{lem:diag} hold accordingly for $\mcA_1$ on $\mcX^\alpha$.
\end{proof}

As in the case of Lemma \ref{lem:invmatrixDHK}, Lemma \ref{lem:invHKD} implies
the invertibility of the matrix operator $1+2\mcH \beta \mcK \hat{\mcD}$.

\begin{lem}
\label{lem:invmatrixHKD}
Let $\alpha_c>\frac34(1+\frac{1}{\nu})$ and $\alpha_c\leq \alpha_d <\alpha_c+1$.
Then the operator $1+2\mcH \beta \mcK\hat{\mcD}$ has a bounded
inverse on $\mcX^{\alpha_d,\alpha_c}$.
\end{lem}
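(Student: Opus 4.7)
The plan is to mimic the Schur-complement argument of Lemma~\ref{lem:invmatrixDHK}, with the roles of $\hat{\mcD}$ and $\mcK$ exchanged. Write
\begin{align*}
\mcJ_{dd}&:=1+2\mcH_d\beta\mcK_{dd}\hat{\mcD}_d, & \mcJ_{dc}&:=2\mcH_d\beta\mcK_{dc}\hat{\mcD}_c, \\
\mcJ_{cd}&:=2\mcH_c\beta\mcK_{cd}\hat{\mcD}_d, & \mcJ_{cc}&:=1+2\mcH_c\beta\mcK_{cc}\hat{\mcD}_c.
\end{align*}
Lemma~\ref{lem:invHKD} supplies invertibility of $\mcJ_{cc}$ on $\mcX^{\alpha_c}$. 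Since $\mcK_{dd}$ is multiplication by a constant and $\hat{\mcD}_d=\p_\tau$, the discrete block becomes
\[
2\mcH_d\beta\mcK_{dd}\p_\tau x_d(\tau)=c\int_\tau^\infty H_d(\tau,\sigma)\beta(\sigma)\p_\sigma x_d(\sigma)\,d\sigma,
\]
and a single integration by parts in $\sigma$ — noting that the boundary term at $\sigma=\tau$ carries a factor $\beta(\tau)\sim\tau^{-1}$ and that the bulk integrand $\p_\sigma(H_d\beta)$ inherits $\tau^{-1}$ thanks to the exponential decay of $H_d$ — gives the required smallness for $\tau_0$ large, so that $\mcJ_{dd}$ is invertible on $\mcX^{\alpha_d}$.

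The decisive step is to prove smallness of the off-diagonal entry $\mcJ_{dc}$. This is more subtle than the analogous estimate in Lemma~\ref{lem:invmatrixDHK} because $\hat{\mcD}_c=\p_\tau+\beta\mcA_c$ contains a $\xi$-derivative. I would exploit that, by Theorem~\ref{thm:K}, $\mcK_{dc}$ is integration against the smooth rapidly decreasing kernel $K_{dc}(\xi)\rho(\xi)$: integration by parts in $\xi$ (boundary terms vanishing thanks to the decay of $K_{dc}$ and the behaviour of $\xi\rho'/\rho$ at $0$ and $\infty$) absorbs $\mcA_c$ into a new operator $\wt{\mcK}_{dc}$ of the same type, yielding
\[
\mcJ_{dc}x(\tau)=2\mcH_d[\beta\,\p_\tau(\mcK_{dc}x)](\tau)+2\mcH_d[\beta^2\,\wt{\mcK}_{dc}x](\tau).
\]
The second summand is directly small because of the extra $\beta\sim\tau^{-1}$. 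For the first I would integrate by parts in $\sigma$ as above, using $|\mcK_{dc}x(\sigma)|\lesssim\sigma^{-\alpha_c}\|x\|_{\mcX^{\alpha_c}}$, to obtain $|\mcJ_{dc}x(\tau)|\lesssim\tau^{-1-\alpha_c}\|x\|_{\mcX^{\alpha_c}}$, which is small in the $\mcX^{\alpha_d}$ norm precisely because $\alpha_d<\alpha_c+1$. The companion entry $\mcJ_{cd}$, needed only in bounded form, is handled by the same scheme and is easier, since $\hat{\mcD}_d=\p_\tau$ is scalar and $\mcK_{cd}$ is multiplication by a fixed function of $\xi$.

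With these four ingredients the Schur-complement system
\begin{align*}
(1-\mcJ_{dd}^{-1}\mcJ_{dc}\mcJ_{cc}^{-1}\mcJ_{cd})x_d&=\mcJ_{dd}^{-1}(y_d-\mcJ_{dc}\mcJ_{cc}^{-1}y),\\
(1-\mcJ_{cc}^{-1}\mcJ_{cd}\mcJ_{dd}^{-1}\mcJ_{dc})x&=\mcJ_{cc}^{-1}(y-\mcJ_{cd}\mcJ_{dd}^{-1}y_d)
\end{align*}
can be inverted by Neumann series exactly as in Lemma~\ref{lem:invmatrixDHK}. The main obstacle is the off-diagonal bound for $\mcJ_{dc}$: the two-step ``integrate by parts in $\xi$, then in $\sigma$'' maneuver is what strips $\hat{\mcD}_c$ off the unknown $x$ while still producing the $\tau_0^{-1}$ smallness coming from $\beta$, and is morally the same device as the conjugation \eqref{eq:conj}--\eqref{eq:Dcident} that drives the proof of Lemma~\ref{lem:invHKD}.
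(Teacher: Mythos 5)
The paper itself omits this proof, saying only that it is ``completely analogous to the proof of Lemma~\ref{lem:invmatrixDHK}''. Your reconstruction follows exactly the Schur-complement route that comment points to, with the diagonal blocks handled by Lemma~\ref{lem:invHKD} together with an integration by parts in $\sigma$ for the discrete block, and the crucial off-diagonal smallness of $\mcJ_{dc}=2\mcH_d\beta\mcK_{dc}\hat{\mcD}_c$ obtained by first integrating by parts in $\xi$ (absorbing $\mcA_c$ into a kernel $\wt{\mcK}_{dc}$ of the same type, using the rapid decay of $K_{dc}$ from Theorem~\ref{thm:K}) and then in $\sigma$ (so that $\p_\tau$ lands on $H_d\beta$ and one gains the $\tau_0^{\alpha_d-\alpha_c-1}$ smallness). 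This is precisely the analogue, in the discrete–continuous coupling, of the conjugation device \eqref{eq:conj}--\eqref{eq:Dcident} used for the diagonal block in Lemma~\ref{lem:invHKD}, and you are right to single it out as the genuine extra work relative to Lemma~\ref{lem:invmatrixDHK}, where $\hat{\mcD}$ sat to the left of $\mcH$ and therefore never acted directly on the unknown. Your argument is correct and coincides with the (omitted) intended proof.
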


\begin{proof}
The proof is completely analogous to the proof of Lemma \ref{lem:invmatrixDHK} and
therefore omitted.
\end{proof}

\subsection{Solution of the main equation}

Now we are ready to solve the main equation \eqref{eq:transport}.
By setting $\mcN:=\sum_{j=1}^5 \mcN_j$ and $\hat{\mcK}:=\mcK^2+[\mcA,\mcK]+\mcK
+\frac{\beta'}{\beta^2}\mcK$ we rewrite Eq.~\eqref{eq:transport} as
\begin{equation}
\label{eq:sysFourier2}
[\hat {\mcD}^2+\beta \hat{\mcD}+\underline{\xi}]\mbx=\mbe+\mcN(\mbx)
-2\beta \mcK \hat{\mcD}\mbx-\beta^2 \hat{\mcK}\mbx
\end{equation}
where $\mbe(\tau,\xi)=\kappa(\tau)^{-2}\mathcal{F} [|\cdot|\tilde \chi(\tau,\cdot)e_2(\tau,\cdot)](\xi)$.
By applying $\mcH$ we find that Eq.~\eqref{eq:transport} is
equivalent to
\begin{equation}
\label{eq:sysFourier3}
\mbx
=\underline{\Phi}(\mbx):=
(1+2\mcH \beta \mcK \hat{\mcD})^{-1}\mcH
\left [ \mbe+\mcN(\mbx)-\beta^2 \hat{\mcK}\mbx \right ]
\end{equation}
We claim that Eq.~\eqref{eq:sysFourier3} has a solution
$(\mbx,\hat{\mcD}\mbx)\in \mcX^{\frac43-2\delta,\frac43-2\delta}\times \mcY^{\frac43,\frac43}$ with
$\delta$ from Definition \ref{def:spaces}.
In order to prove this we have to recall some mapping properties from \cite{DonKri12}.

\begin{prop}
\label{prop:N}
We have the estimate
\[ \|\mcN(\mbx)-\mcN(\underline{y})\|_{\mcY^{\frac43+\frac54,\frac43+\frac54}}\lesssim
\|\mbx-\underline{y}\|_{\mcX^{\frac43-2\delta,\frac43-2\delta}} \]
for all $\mbx,\underline{y}$ in the unit ball in $\mcX^{\frac43-2\delta,\frac43-2\delta}$.
\end{prop}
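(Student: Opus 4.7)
The plan is to reduce Proposition~\ref{prop:N} to a pointwise estimate on a physical-space quantity $\mathcal{G}(\tau, R)$ and then transfer the bound back to the Fourier side, following the template of \cite{DonKri12}. Writing $v := \mathcal{F}^{-1}\mbx$ and $w := \mathcal{F}^{-1}\underline{y}$, we have
$$\mcN(\mbx) - \mcN(\underline{y}) = \kappa^{-2}(\tau)\,\mathcal{F}\bigl(\mathcal{G}(\tau, \cdot)\bigr),$$
where
$$\mathcal{G} = 5\bigl(u_0^4 - u_2^4\bigr)(v - w) + R\bigl[N(u_2, v/R) - N(u_2, w/R)\bigr].$$
Using the factorization $a^k - b^k = (a-b)\sum_{j=0}^{k-1}a^{k-1-j}b^j$, each of the five summands of $\mathcal{G}$ takes the schematic form $P(u_0, u_2)\,Q(v/R, w/R)\,(v - w)$, with $P$ a polynomial of degree $\le 4$ in $u_0, u_2$ (containing the factor $u_0 - u_2$ in the first summand) and $Q$ a polynomial of degree $\le 4$ in $v/R, w/R$.

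For the summand $5(u_0^4-u_2^4)(v-w)$, Proposition~\ref{prop1}(i)--(ii) together with the pointwise asymptotics $|u_0|, |u_2| \lesssim \la^{1/2}(t)\langle R\rangle^{-1}$ yields
$$|u_0^4 - u_2^4|\lesssim \la^{2}(t)\,\mu^{-2}(t)\,R\langle R\rangle^{-3}$$
on the support of $\tilde\chi(R/(\nu\tau))$. Since $\kappa^{-2}(\tau) = \la^{-2}(t)$ and $\mu^{-2}(t)\sim \tau^{-2}$ in $\tau$-time, this contributes an extra factor $\tau^{-2}$ on top of the $\mcX$-norm control of $v - w$, giving decay rate $4/3 - 2\delta + 2$, well above the required $4/3 + 5/4$. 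For the remaining summands, which carry at least one extra factor of $v/R$ or $w/R$, we invoke the $L^\infty_R$-embedding of the $X$-space established in \cite{DonKri12} (cf.\ Propositions~5.5 and~5.8 there); each such factor contributes decay $\tau^{-(4/3 - 2\delta)}$ in sup-norm, while $|u_2|\lesssim \la^{1/2}(t)$ combines with $\kappa^{-2}(\tau) = \la^{-2}(t)$ and the $R$-dependent weights to produce overall decay exceeding $\tau^{-(4/3 + 5/4)}$ provided $\nu$ is large enough.

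The final ingredient, translating pointwise bounds on $\mathcal{G}$ into the $\mcY$-norm of $\mathcal{F}(\mathcal{G})$, uses the asymptotics of $\phi(R, \xi)$ from Theorem~\ref{thm:spec} exactly as in \cite{DonKri12}. The main obstacle is therefore purely a matter of bookkeeping: each of the five monomials in $\mathcal{G}$ demands its own decay count, balancing powers of $\la(t)$, $\mu(t)$ and the $R$-weights against the norms in Definition~\ref{def:spaces}. The only genuinely new feature of \eqref{Min} is the oscillatory prefactor $\exp(-\eps_0\sin\log t)$, which is bounded together with all of its $t\partial_t$ derivatives and therefore affects none of the decay rates; consequently the nonlinear estimates from \cite{DonKri12} transfer verbatim.
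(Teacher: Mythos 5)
Your proposal is correct and follows essentially the same route as the paper: both arguments reduce Proposition~\ref{prop:N} to the nonlinear estimates of \cite{DonKri12} (Lemmas 4.4--4.10), using the pointwise asymptotics of $u_0, u_2$ from Proposition~\ref{prop1}, the $L^\infty_R$ control afforded by the $X$-space, and the asymptotics of $\phi(R,\xi)$ from Theorem~\ref{thm:spec} to pass back to the Fourier side. Your decay counts (a gain of $\tau^{-2}$ from the linear piece $5(u_0^4-u_2^4)(v-w)$ via $\kappa^{-2}\mu^{-2}\sim\tau^{-2}$, and a gain of at least $\tau^{-(4/3-2\delta)}$ from each extra factor of $v/R$ in the quadratic and higher pieces) match the paper's, and your closing remark that $\kappa^{-2}(\tau)$ is bounded and that the oscillatory prefactor in $\la(t)$ does not affect the rates is precisely the paper's observation that the $\tau^{1/4}$ loss of \cite{DonKri12}, Remark~4.8 does not occur here.
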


\begin{proof}
This follows by inspection of the proofs of the corresponding results in \cite{DonKri12}, in particular
Lemmas 4.4 -- 4.10. Note also that the loss of $\tau^\frac14$ discussed in Remark 4.8 in \cite{DonKri12}
does not occur
in our case since $\kappa(\tau)^{-2}$ is bounded for all $\tau\geq \tau_0$ (unlike
the corresponding $\tilde \lambda(\tau)^{-2}$ in \cite{DonKri12}).
In fact, as in \cite{DonKri12} one proves that the nonlinearity maps the space $X$
to $Y$ and for the time decay one gains at least $\tau^{-(\frac43-2\delta)}$ (from the quadratic
part $\mcN_2$, all other contributions are even better; the linear part $\mcN_1$ yields
a gain of $\tau^{-2}$, cf.~Lemma 4.5 in \cite{DonKri12}).
Since $\delta>0$ is assumed to be small, the stated bound follows.
\end{proof}

Another result from \cite{DonKri12} which we require concerns the mapping properties
of $\mcK$ and $\hat {\mcK}$.

\begin{prop}
\label{prop:K}
The operators $\mcK$ and $\hat{\mcK}$ satisfy the bounds
\begin{align*} \|\mcK (a,f)\|_{\C\times Y} &\lesssim \|(a,f)\|_{\C\times Y} \\
\|\mcK(a,f)\|_{\C\times X}&\lesssim \|(a,f)\|_{\C \times X} \\
 \|\hat{\mcK}(a,f)\|_{\C\times Y}&\lesssim \|(a,f)\|_{\C\times X}.
\end{align*}
\end{prop}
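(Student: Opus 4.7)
My plan is to decompose $\mcK$ according to the $2\times 2$ matrix structure \eqref{eq:Kstructure} and verify the three stated mapping properties componentwise, taking Proposition~\ref{prop:KAKc} as the nontrivial harmonic-analytic input for the $cc$-block and handling the mixed blocks via the explicit kernel representations in Theorem~\ref{thm:K}.

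For the first two bounds on $\mcK$, the $cc$-block is handled directly by Proposition~\ref{prop:KAKc}, which yields both $\|\mcK_{cc}f\|_Y\lesssim\|f\|_Y$ and $\|\mcK_{cc}f\|_X\lesssim\|f\|_X$. The $dd$-block $\mcK_{dd}$ is multiplication by a constant, hence trivially bounded on $\C$. For $\mcK_{dc}f=\int_0^\infty K_{dc}(\xi)f(\xi)\rho(\xi)\,d\xi$ with $K_{dc}$ smooth and rapidly decreasing (Theorem~\ref{thm:K}), I apply H\"older/Cauchy--Schwarz to split the $L^p$- and $L^2_\rho$-parts of $\|f\|_Y$ (resp.\ $\|f\|_X$); the Schwartz tails of $K_{dc}\rho$ absorb all relevant weights. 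For $\mcK_{cd}$, the explicit formula shows $\mcK_{cd}a=a\Psi$ with $\Psi(\xi)=c\int_0^\infty\phi(R,\xi)[R\partial_R-1]\phi(R,\xi_d)\,dR$; because $\phi(\cdot,\xi_d)$ decays exponentially and $\phi(R,\xi)$ has the asymptotics in Theorem~\ref{thm:spec} (split into small $\xi$, where $\phi\approx\phi_0$, and large $\xi$, where one integration by parts in $R$ exploits oscillation), one sees $\Psi\in X\cap Y$, giving $\|\mcK_{cd}a\|_Y+\|\mcK_{cd}a\|_X\lesssim|a|$.

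For the third bound, I treat each of the four pieces of $\hat{\mcK}=\mcK^2+[\mcA,\mcK]+\mcK+\beta'\beta^{-2}\mcK$ as a map $\C\times X\to\C\times Y$. The scalar prefactor satisfies $\beta'\beta^{-2}=O(1)$ since $\beta\sim\tau^{-1}$. The single $\mcK$ term maps $X\to Y$ via the smoothing $\|\mcK_{cc}f\|_Y\lesssim\|f\|_X$ of Proposition~\ref{prop:KAKc} together with the mixed-block analysis above. For $\mcK^2$, I compose $\mcK:\C\times X\to\C\times X$ (from step one) with $\mcK:\C\times X\to\C\times Y$. For the commutator, using $\mcA=\mathrm{diag}(0,\mcA_c)$ one has
\[
[\mcA,\mcK]=\begin{pmatrix}0 & -\mcK_{dc}\mcA_c\\ \mcA_c\mcK_{cd} & [\mcA_c,\mcK_{cc}]\end{pmatrix}.
\]
The $cc$-block is the second bound in Proposition~\ref{prop:KAKc}. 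The operator $\mcA_c\mcK_{cd}a=a\,\mcA_c\Psi$ is $a$ times a fixed smooth rapidly-decreasing function (since $\mcA_c=-2\xi\partial_\xi-(5/2+\xi\rho'/\rho)$ preserves such behavior, by the symbol bounds on $\rho$), so its $Y$-norm is $\lesssim|a|$. For $\mcK_{dc}\mcA_c f$, I integrate by parts in $\xi$ to transfer $\mcA_c$ onto $K_{dc}\rho$, again smooth and rapidly decreasing, and then close by H\"older against $\|f\|_X$.

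The principal obstacle is exactly the $X\to Y$ smoothing gain for $\mcK_{cc}$ and $[\mcA_c,\mcK_{cc}]$ encoded in Proposition~\ref{prop:KAKc}; this is what enables $\hat{\mcK}$ to absorb a derivative of the Fourier variable through the commutator while still landing in $Y$ and thus what makes the whole iteration scheme in Subsection~\ref{subsec:solmain} viable. All the other steps, including the mixed-block estimates and the composition for $\mcK^2$, are routine once this is granted; the remaining care is only in bookkeeping the weights in the definitions of $X$ and $Y$ so that the rapid decay of $K_{dc}$ and of $\phi(\cdot,\xi_d)$ suffices at both ends of the spectrum.
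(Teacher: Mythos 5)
The paper itself proves this proposition by a single citation, stating that it follows from Corollaries 5.7 and 5.10 of \cite{DonKri12}; it gives no in-line argument. Your proposal, by contrast, is a self-contained verification that reconstructs what those corollaries deliver. The architecture you choose --- the $2\times 2$ block decomposition of $\mcK$, Proposition~\ref{prop:KAKc} as the sole nontrivial input for the $cc$-block, Theorem~\ref{thm:K} and Theorem~\ref{thm:spec} for the mixed blocks, and the commutator identity $[\mcA,\mcK]=\left(\begin{smallmatrix}0 & -\mcK_{dc}\mcA_c\\ \mcA_c\mcK_{cd} & [\mcA_c,\mcK_{cc}]\end{smallmatrix}\right)$ following from $\mcA=\mathrm{diag}(0,\mcA_c)$ --- is exactly the decomposition one expects behind the cited results, and each step you outline is sound. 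In particular: the observation that $\mcK_{dd}$ is scalar multiplication, that $\mcK_{cd}a=a\Psi$ with $\Psi$ rapidly decreasing (small-$\xi$ boundedness from $\phi\approx\phi_0$, large-$\xi$ decay from oscillation plus the exponential decay of $\phi(\cdot,\xi_d)$), that $\mcK_{dc}$ dualizes against a Schwartz-like kernel, that $\beta'\beta^{-2}=O(1)$, and that the $X\to Y$ gain for the full matrix $\mcK$ (needed to close the $\mcK^2$ composition) is supplied by the middle line of Proposition~\ref{prop:KAKc} together with the mixed-block estimates --- all check out. The only thing you leave slightly implicit is the smoothness of $\Psi$ across $\xi=0$ needed so that $\mcA_c\Psi$ stays well-behaved, but this follows from the smoothness of $\phi(R,\cdot)$ asserted in Theorem~\ref{thm:spec} and does not affect the correctness. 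Net effect: you do more work than the paper (which outsources the proof), but your argument is correct and gives the reader something the paper does not, namely a visible reason why the commutator absorbs the $\xi$-derivative and still lands in $Y$.
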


\begin{proof}
This follows from Corollaries 5.7 and 5.10 in \cite{DonKri12}.
\end{proof}

Now we can prove the existence of a solution to Eq.~\eqref{eq:sysFourier3}.

\begin{thm}
\label{thm:exsol}
The function $\underline{\Phi}$ as defined in Eq.~\eqref{eq:sysFourier3} maps the closed unit
ball of $\mcX^{\frac43-2\delta,\frac43-2\delta}$ to itself and is contractive. As a consequence,
there exists a unique solution $\mbx$ of Eq.~\eqref{eq:sysFourier3}
in the closed unit ball of $\mcX^{\frac43-2\delta,\frac43-2\delta}$.
\end{thm}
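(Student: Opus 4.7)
The plan is to verify the hypotheses of the Banach fixed point theorem on the closed unit ball $\overline{B} \subset \mathcal{X}^{\frac43-2\delta,\frac43-2\delta}$. First I check that $\underline{\Phi}$ makes sense. The operator $(1+2\mathcal{H}\beta\mathcal{K}\hat{\mathcal{D}})^{-1}$ is bounded on $\mathcal{X}^{\frac43-2\delta,\frac43-2\delta}$ by Lemma~\ref{lem:invmatrixHKD}, provided $\nu>3$ and $\delta$ is small enough so that $\frac43-2\delta > \frac34(1+\frac1\nu)$ (this is exactly where the assumption $\nu>3$ enters). So it suffices to bound each of the three contributions to $\mathcal{H}[\underline{e}+\mathcal{N}(\underline{x})-\beta^2\hat{\mathcal{K}}\underline{x}]$ in $\mathcal{X}^{\frac43-2\delta,\frac43-2\delta}$.

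The inhomogeneous piece was already handled in \eqref{eq:He}:
\[
\|\mathcal{H}\underline{e}\|_{\mathcal{X}^{\frac43-2\delta,\frac43-2\delta}} \lesssim \nu^{2\delta}\tau_0^{-\epsilon}
\]
for some $\epsilon>0$. For the nonlinear piece, Proposition~\ref{prop:N} gives, for $\underline{x}\in\overline{B}$,
\[
\|\mathcal{N}(\underline{x})\|_{\mathcal{Y}^{\frac43+\frac54,\frac43+\frac54}} \lesssim \|\underline{x}\|_{\mathcal{X}^{\frac43-2\delta,\frac43-2\delta}}.
\]
Then Lemma~\ref{lem:estH} yields $\mathcal{H}\mathcal{N}(\underline{x}) \in \mathcal{X}^{\frac43+\frac54,\frac{19}{12}-2\delta}$, which embeds into $\mathcal{X}^{\frac43-2\delta,\frac43-2\delta}$ with a gain of $\tau_0^{-1/4}$ from the continuous index (the discrete index gives an even stronger gain of $\tau_0^{-5/4}$). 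For the linear operator piece, $\beta(\tau)\sim\tau^{-1}$ and Proposition~\ref{prop:K} gives $\hat{\mathcal{K}}:\mathbb{C}\times X \to \mathbb{C}\times Y$, so
\[
\|\beta^2\hat{\mathcal{K}}\underline{x}\|_{\mathcal{Y}^{\frac{10}{3}-2\delta,\frac{10}{3}-2\delta}} \lesssim \|\underline{x}\|_{\mathcal{X}^{\frac43-2\delta,\frac43-2\delta}},
\]
and applying $\mathcal{H}$ followed by the embedding into $\mathcal{X}^{\frac43-2\delta,\frac43-2\delta}$ produces a gain of at least $\tau_0^{-1+2\delta}$. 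Combining these three estimates and applying the bounded operator $(1+2\mathcal{H}\beta\mathcal{K}\hat{\mathcal{D}})^{-1}$, we obtain
\[
\|\underline{\Phi}(\underline{x})\|_{\mathcal{X}^{\frac43-2\delta,\frac43-2\delta}} \lesssim \tau_0^{-\epsilon} + \tau_0^{-1/4}\|\underline{x}\|_{\mathcal{X}^{\frac43-2\delta,\frac43-2\delta}} + \tau_0^{-1+2\delta}\|\underline{x}\|_{\mathcal{X}^{\frac43-2\delta,\frac43-2\delta}},
\]
and choosing $\tau_0$ sufficiently large shows $\underline{\Phi}(\overline{B})\subset\overline{B}$.

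For contraction, the difference $\underline{\Phi}(\underline{x})-\underline{\Phi}(\underline{y})$ is
\[
(1+2\mathcal{H}\beta\mathcal{K}\hat{\mathcal{D}})^{-1}\mathcal{H}\bigl[\mathcal{N}(\underline{x})-\mathcal{N}(\underline{y}) - \beta^2\hat{\mathcal{K}}(\underline{x}-\underline{y})\bigr].
\]
The Lipschitz bound in Proposition~\ref{prop:N} and the linear estimate for $\beta^2\hat{\mathcal{K}}$ repeat verbatim, and yield
\[
\|\underline{\Phi}(\underline{x})-\underline{\Phi}(\underline{y})\|_{\mathcal{X}^{\frac43-2\delta,\frac43-2\delta}} \lesssim (\tau_0^{-1/4} + \tau_0^{-1+2\delta})\|\underline{x}-\underline{y}\|_{\mathcal{X}^{\frac43-2\delta,\frac43-2\delta}}.
\]
Taking $\tau_0$ large enough makes the prefactor $<\tfrac12$, and the Banach fixed point theorem produces the unique solution $\underline{x}\in\overline{B}$.

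The main conceptual obstacle is \emph{not} in this final step (which is essentially a bookkeeping exercise once all the previous lemmas are in hand), but is already baked into Lemma~\ref{lem:invmatrixHKD}: the operator $(1+2\mathcal{H}\beta\mathcal{K}\hat{\mathcal{D}})^{-1}$ had to be constructed via the delicate Neumann-series/orthogonality argument (Lemmas~\ref{lem:offdiag}, \ref{lem:diag}, Corollary~\ref{cor:invDHKc}) precisely because the transference term $2\beta\mathcal{K}\hat{\mathcal{D}}\underline{x}$ in \eqref{eq:sysFourier2} lacks a small prefactor. Here I merely need to verify that the remaining terms $\underline{e}$, $\mathcal{N}(\underline{x})$, and $\beta^2\hat{\mathcal{K}}\underline{x}$ are small enough after one application of $\mathcal{H}$ to be absorbed by the large-$\tau_0$ scheme, which holds because the nonlinearity sits in a much better $\mathcal{Y}$-space (with time weight $\frac43+\frac54$ in place of $\frac43$) and $\beta^2$ supplies an extra $\tau^{-2}$.
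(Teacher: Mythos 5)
Your proof is correct and follows the same route as the paper: apply $\mathcal{H}$, invoke Lemma~\ref{lem:invmatrixHKD} for the inverse of $1+2\mathcal{H}\beta\mathcal{K}\hat{\mathcal{D}}$, and then absorb the three remaining terms (inhomogeneous, nonlinear, $\beta^2\hat{\mathcal{K}}$) via their extra time decay and a large choice of $\tau_0$; the explicit bookkeeping of the embedding gains ($\tau_0^{-1/4}$ from $\frac{19}{12}-2\delta$ vs.\ $\frac43-2\delta$, $\tau_0^{-1+2\delta}$ from $\frac73-4\delta$ vs.\ $\frac43-2\delta$) is precisely what the paper's terse proof sweeps into ``for some $\epsilon>0$.'' One small inaccuracy: the parenthetical claim that $\nu>3$ enters through the condition $\frac43-2\delta>\frac34(1+\frac1\nu)$ is not where that hypothesis is actually sharp (that inequality already holds for $\nu>\frac97$); the binding constraint $\nu>3$ comes from Lemma~\ref{lem:inhomY}, which needs $\frac73+\epsilon<3-\frac12(1+\frac1\nu)$ in order to place $\underline{e}$ in $\mathcal{Y}^{\frac73+\epsilon,\frac73+\epsilon}$.
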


\begin{proof}
From Lemma \ref{lem:estH} and Proposition \ref{prop:N} we infer
\begin{align*}
\|\mcH\mcN(\mbx)-\mcH\mcN(\underline{y})\|_{\mcX^{\frac43-2\delta,\frac43-2\delta}}&\lesssim \tau_0^{-\epsilon}
\|\mbx-\underline{y}\|_{\mcX^{\frac43-2\delta,\frac43-2\delta}}
\end{align*}
for all $\mbx$, $\underline{y}$ in the unit ball in $\mcX^{\frac43-2\delta,\frac43-2\delta}$ and some $\epsilon>0$.
Thus, in view of Lemma \ref{lem:invmatrixHKD} and the contraction mapping principle it suffices
to prove smallness of the remaining terms in the appropriate spaces.
In the discussion following Lemma \ref{lem:inhomY} we have already noted that
\[ \|\mcH \mbe\|_{\mcX^{\frac43-2\delta,\frac43-2\delta}}\lesssim \nu^{2\delta}\tau_0^{-\epsilon} \]
for some $\epsilon>0$.
Furthermore, we have
\begin{align*}
\|\mcH \beta^2 \hat{\mcK}\mbx\|_{\mcX^{\frac43-2\delta,\frac43-2\delta}}&\lesssim \nu^{2\delta}\tau_0^{-1+2\delta}
\|\mbx\|_{\mcX^{\frac43-2\delta,\frac43-2\delta}}
\end{align*}
by Lemma \ref{lem:estH} and Proposition \ref{prop:K}.
\end{proof}

Finally, we consider the derivative $\hat{\mcD}\mbx$.

\begin{cor}
\label{cor:Dx}
Let $\mbx$ be the solution from Theorem \ref{thm:exsol}.
Then $\hat{\mcD}\mbx$ belongs to the closed unit ball of $\mcY^{\frac43,\frac43}$.
\end{cor}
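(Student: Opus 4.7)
The plan is to derive an equation for $\hat{\mcD}\mbx$ directly from the fixed-point equation and then invert the relevant operator. Starting from Eq.~\eqref{eq:sysFourier2}, applying $\mcH$ gives
\[
\mbx=\mcH\bigl[\mbe+\mcN(\mbx)-2\beta\mcK\hat{\mcD}\mbx-\beta^{2}\hat{\mcK}\mbx\bigr],
\]
and then applying $\hat{\mcD}$ and collecting the $\hat{\mcD}\mbx$ terms on the left yields
\[
(1+2\hat{\mcD}\mcH\beta\mcK)\hat{\mcD}\mbx=\hat{\mcD}\mcH\bigl[\mbe+\mcN(\mbx)-\beta^{2}\hat{\mcK}\mbx\bigr].
\]
Since $\nu>3$ we have $\tfrac34(1+\tfrac1\nu)<1<\tfrac43$, so Lemma~\ref{lem:invmatrixDHK} applied with $\alpha_{d}=\alpha_{c}=\tfrac43$ supplies a bounded inverse of the operator on the left-hand side on $\mcY^{\frac43,\frac43}$. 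The task therefore reduces to bounding the right-hand side in $\mcY^{\frac43,\frac43}$ with norm tending to zero as $\tau_{0}\to\infty$, so that the resulting $\hat{\mcD}\mbx$ lies in the closed unit ball.

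Each of the three summands on the right I would estimate by combining the second bound of Lemma~\ref{lem:estH} (which trades one power of $\tau$-decay for the presence of $\hat{\mcD}$) with the structural mapping results established earlier. The inhomogeneity contribution is already handled by~\eqref{eq:He}, which gives $\|\hat{\mcD}\mcH\mbe\|_{\mcY^{\frac43,\frac43}}\lesssim \tau_{0}^{-\epsilon}$ directly. For the nonlinearity, Proposition~\ref{prop:N} places $\mcN(\mbx)$ in $\mcY^{\frac{31}{12},\frac{31}{12}}$ with bounded norm, and Lemma~\ref{lem:estH} then yields $\hat{\mcD}\mcH\mcN(\mbx)\in\mcY^{\frac{31}{12},\frac{19}{12}}$, which embeds into $\mcY^{\frac43,\frac43}$ with smallness factor $\tau_{0}^{-1/4}$. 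For the $\hat{\mcK}$ term, Proposition~\ref{prop:K} gives $\hat{\mcK}\mbx\in\mcY^{\frac43-2\delta,\frac43-2\delta}$, so multiplication by $\beta^{2}\simeq\tau^{-2}$ shifts $\beta^{2}\hat{\mcK}\mbx$ to $\mcY^{\frac{10}{3}-2\delta,\frac{10}{3}-2\delta}$, after which Lemma~\ref{lem:estH} produces $\hat{\mcD}\mcH[\beta^{2}\hat{\mcK}\mbx]\in\mcY^{\frac{10}{3}-2\delta,\frac{7}{3}-2\delta}$, giving a bound of order $\tau_{0}^{-1+2\delta}$ once embedded into $\mcY^{\frac43,\frac43}$.

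Assembling these three bounds and invoking the bounded inverse from Lemma~\ref{lem:invmatrixDHK}, the expected conclusion is
\[
\|\hat{\mcD}\mbx\|_{\mcY^{\frac43,\frac43}}\lesssim \tau_{0}^{-\epsilon}+\tau_{0}^{-1/4}+\tau_{0}^{-1+2\delta},
\]
which is strictly below $1$ once $\tau_{0}$ is taken sufficiently large. There is no genuine obstacle here; the only point requiring care is verifying that each right-hand side summand lives in a space \emph{strictly better} than $\mcY^{\frac43,\frac43}$, so that the embedding produces real smallness. This is precisely guaranteed by the slack built into the preceding estimates: the $\tfrac54$ gain in Proposition~\ref{prop:N}, the $\tau^{-2}$ gain from the $\beta^{2}$ prefactor, the $\epsilon$-gain from Lemma~\ref{lem:inhomY}, and the hypothesis $\nu>3$ which ensures that $\tfrac43$ strictly exceeds the threshold $\tfrac34(1+\tfrac1\nu)$ required for the inversion.
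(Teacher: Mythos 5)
Your proposal is correct and follows essentially the same route as the paper's own (very terse) proof: derive the equation $(1+2\hat{\mcD}\mcH\beta\mcK)\hat{\mcD}\mbx=\hat{\mcD}\mcH[\mbe+\mcN(\mbx)-\beta^{2}\hat{\mcK}\mbx]$ from Eq.~\eqref{eq:sysFourier2}, invert via Lemma~\ref{lem:invmatrixDHK}, and bound the right-hand side term by term using \eqref{eq:He}, Proposition~\ref{prop:N}, Proposition~\ref{prop:K}, and Lemma~\ref{lem:estH}. Your write-up simply supplies the exponent bookkeeping that the paper leaves implicit, and it checks out.
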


\begin{proof}
The claim follows by noting that Eq.~\eqref{eq:sysFourier2}
implies
\[ \hat{\mcD}\mbx=(1+2\hat{\mcD}\mcH \beta \mcK)^{-1}\hat{\mcD}\mcH\left [
\mbe+\mcN(\mbx)-\beta^2 \hat{\mcK}\mbx \right ]\]
and by Propositions \ref{prop:N}, \ref{prop:K} as well as Lemmas \ref{lem:inhomY}, \ref{lem:estH},
\ref{lem:invmatrixDHK} we see that $\hat{\mcD}\mbx$ belongs to the unit ball of $\mcY^{\frac43,\frac43}$.
\end{proof}
In light of Lemma 4.3 in \cite{DonKri12}, we infer that
\[
\eps(\tau, R): = R^{-1}\big[x_d(\tau)\phi_d(R) + \int_0^\infty x(\tau, \xi)\phi(R, \xi)\rho(\xi)\,d\xi\big]
\]
satisfies $(\eps(\tau, \cdot), \eps_\tau(\tau,\cdot))\in H^{\frac{5}{4}}\times H^{\frac{1}{4}}$, with norm vanishing
as $\tau\rightarrow +\infty$.
We have thus proved

\begin{thm}Let $\lambda(t)$ be as in \eqref{Min0}. Then the equation \eqref{u5} admits a solution $u(t, r)$ of the form
\[
u(t, r) = \lambda^{\frac{1}{2}}(t)W(\lambda(t)r) + \eps(t, r),\,t\in (0, t_0),
\]
with $(\eps(t, \cdot), \eps_{t}(t, \cdot)) \in  H^{\frac{5}{4}}\times H^{\frac{1}{4}}$. Given $\delta>0$, one may arrange
\[
\|(\eps, \eps_t)\|_{(\dot{H}^1\times L^2)(r\geq t)}<\delta
\]
\end{thm}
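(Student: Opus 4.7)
The plan is to assemble the two main pieces already constructed: the approximate solution $u_2$ from Proposition \ref{prop1} and the perturbative correction obtained through the fixed point argument of Theorem \ref{thm:exsol}. Concretely, we set $u = u_2 + \eps$ inside the backward light cone $\{r \le t,\ 0<t<t_0\}$ and verify that this function solves \eqref{u5} with the stated regularity.

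First I would unpack the distorted Fourier representation. The vector $\mbx = (x_d, x)$ provided by Theorem \ref{thm:exsol}, together with $\hat{\mcD}\mbx \in \mcY^{\frac43,\frac43}$ from Corollary \ref{cor:Dx}, solves \eqref{eq:sysFourier3}, which is equivalent to \eqref{eq:veqn2}. Setting
\[
v(\tau,R) := \frac{\phi(R,\xi_d)}{\|\phi(\cdot,\xi_d)\|_{L^2}^2}x_d(\tau) + \int_0^\infty \phi(R,\xi)x(\tau,\xi)\rho(\xi)\,d\xi,
\]
and then $\eps(\tau,R) := R^{-1} v(\tau,R)$, one recovers a solution of \eqref{eq:veqn2}. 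Because the cutoff $\tilde\chi(R/(\nu\tau))$ equals $1$ inside the (dilated) light cone, on $R \le \nu\tau$ the function $\eps$ satisfies the genuine equation \eqref{eqn:eps}, and thereby $u_2+\eps$ satisfies $\cL_{quintic}(u_2+\eps) = 0$ on $r \le t$. Converting back to physical variables via $\tau = \int_{t_0}^t \lambda(s)\,ds$, $R = \lambda(t)r$, this gives the desired radial energy solution in the light cone.

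Next, for the Sobolev regularity $(\eps(t,\cdot),\eps_t(t,\cdot))\in H^{5/4}\times H^{1/4}$ and the decay of this norm as $t\to0+$ (equivalently $\tau\to\infty$), I would invoke Lemma 4.3 of \cite{DonKri12}: the membership $\mbx \in \mcX^{\frac43-2\delta,\frac43-2\delta}$ and $\hat{\mcD}\mbx \in \mcY^{\frac43,\frac43}$ translates precisely into the stated Sobolev control through the mapping properties of $\mcF^{-1}$ together with the weights built into the norms $X, Y$. The polynomial time decay $\tau^{-(\frac43-2\delta)}$ yields vanishing of these norms as $\tau\to\infty$.

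Finally, for the exterior estimate $\|(\eps,\eps_t)\|_{(\dot H^1\times L^2)(r\ge t)}<\delta$, I would combine two ingredients. Proposition \ref{prop1}(iii) already provides the $C^2$ extension of $u_2$ beyond the cone with
\[
\int_{r\ge t}\bigl[|\p_r u_2|^2+(\p_t u_2)^2+u_2^6\bigr]\,r^2\,dr<\delta/2
\]
for $t$ small. For $\eps$ we use the standard trick of replacing $u$ outside the cone by a Hamilton--Jacobi type extension that preserves finite energy and whose energy on $\{r\ge t\}$ is controlled by the $H^{5/4}\times H^{1/4}$ norm of $(\eps,\eps_t)|_{r=t}$, which is small by the preceding paragraph. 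Together with finite propagation speed this yields the required exterior smallness and delivers a genuine energy solution of \eqref{u5} on all of $\R^3$ (for small $t>0$) of the stated form.

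The main obstacle is the last step: matching the interior solution, which lives only on $r\le t$, to an exterior extension while keeping both the PDE satisfied on $\{r\ge t\}$ and the energy small there. The exterior extension of $u_2$ from Proposition \ref{prop1}(iii) handles the ``bulk'' part, but controlling the perturbation $\eps$ across the characteristic cone $r=t$ requires converting the interior Sobolev bounds on $\eps|_{r=t}$ (obtained from the $\mcX$-norm via trace) into a genuine finite-energy datum on the exterior of the cone, and this trace estimate is where the regularity threshold $H^{5/4}\times H^{1/4}$ (rather than merely $\dot H^1\times L^2$) is essential.
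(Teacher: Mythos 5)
The overall architecture of your argument matches what the paper (implicitly) invokes via the one-line reference to \cite{KST}: take $\mbx$ from Theorem \ref{thm:exsol}, pass to $v$ and then $\eps=v/R$ via $\mcF^{-1}$, use Lemma~4.3 of \cite{DonKri12} for the $H^{5/4}\times H^{1/4}$ bound with vanishing norm as $\tau\to\infty$, and then glue to an exterior extension using Proposition~\ref{prop1}(iii), finite propagation speed, and energy conservation. So the proposal is on the right track.

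However, the last step contains an imprecision worth flagging. You introduce a ``Hamilton--Jacobi type extension'' of $\eps$ beyond the cone and claim to need a trace estimate at $r=t$, but in fact no separate extension of $\eps$ is required: by construction the fixed point $\mbx$ solves the \emph{cutoff} equation \eqref{eq:veqn2} for all $R>0$, so $\eps(\tau,\cdot)=R^{-1}\mcF^{-1}\mbx(\tau,\cdot)$ is already globally defined on $\R^3$ with the $H^{5/4}\times H^{1/4}$ bound from Lemma~4.3 of \cite{DonKri12}. The only ``extension'' step is the one for $u_2$, which you correctly take from Proposition~\ref{prop1}(iii). The correct form of the final step (the ``as in \cite{KST}'' part) is: at a fixed small time $t_1$, the globally defined data $\bigl(u_2+\eps,\ \p_t(u_2+\eps)\bigr)|_{t=t_1}$ lies in $\dot H^1\times L^2$; solve the genuine Cauchy problem \eqref{u5} backward in time from $t_1$. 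Finite speed of propagation together with uniqueness forces this solution to agree with $u_2+\eps$ inside the cone (where the cutoff is $1$ and the genuine PDE was already satisfied), while energy conservation plus the smallness of energy outside $\{r\le t\}$ lets one continue the exterior piece down to $t=0$. In particular, the threshold $H^{5/4}\times H^{1/4}$ is not what makes the exterior matching work --- energy regularity $\dot H^1\times L^2$ suffices for that --- it is rather the natural regularity that falls out of the weighted spaces $\mcX, \mcY$ on the distorted Fourier side. With that correction the argument closes.
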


\begin{proof}
This follows from Theorem~\ref{thm:exsol} as in \cite{KST} by exploiting energy conservation and smallness of energy outside the light cone.
\end{proof}

\bigskip

% Enter the first author's name and address:
\centerline{\scshape Roland Donninger, Joachim Krieger}
\medskip
{\footnotesize
% please put the address of the first author
 \centerline{B\^{a}timent des Math\'ematiques, EPFL}
\centerline{Station 8,
CH-1015 Lausanne,
  Switzerland}
  \centerline{\email{joachim.krieger@epfl.ch}}
} % Do not forget to end the {\footnotesize by the sign }

\medskip

\centerline{\scshape Min Huang, Wilhelm Schlag}
\medskip
{\footnotesize
% please put the address of the first author
 \centerline{Department of Mathematics, The University of Chicago}
\centerline{5734 South University Avenue, Chicago, IL 60615, U.S.A.}
\centerline{\email{schlag@math.uchicago.edu}
}
} % Do not forget to end the {\footnotesize by the sign }

\end{document}